\renewcommand{\familydefault}{\sfdefault}
\newcommand{\private}[1]{}
\renewcommand\l@subsection{\@tocline{2}{0pt}{2pc}{5pc}{}}
\newcommand{\R}{{\mathbb R}}
\newcommand{\Z}{{\mathbb Z}}
\newcommand{\Map}{\operatorname{Map}}
\newcommand{\Aut}{\operatorname{Aut}}
\newcommand{\Ho}{\operatorname{H}}
\newcommand{\coker}{\operatorname{coker}}
\newcommand{\Lk}{{\mathcal{L}}}
\newcommand{\HLk}{{\mathcal{H}}}
\newcommand{\LD}{{\mathcal{LD}}}
\newcommand{\HLD}{{\mathcal{HD}}}
\newcommand{\LW}{{\mathcal{LW}}}
\newcommand{\HLW}{{\mathcal{HW}}}
\newcommand{\LV}{{\mathcal{LV}}}
\newcommand{\HLV}{{\mathcal{HV}}}
\newcommand{\ord}{\operatorname{ord}}
\newcommand{\defect}{\operatorname{def}}
\newcommand{\x}{\times}
\newcommand{\LPB}{{\oplus_l C[\vec{d}_l+s_l;\, \Lk_{m}^n, c_l(\Gamma)]}}
\newcommand{\HPB}{{\oplus_l C[\vec{d}_l+s_l;\, \HLk_{m}^n, c_l(\Gamma)]}}
\newcommand{\FLPB}{{\oplus_l C[\vec{d}_l+s_l;\, L, c_l(\Gamma)]}}
\newcommand{\FHPB}{{\oplus_l C[\vec{d}_l+s_l;\, H, c_l(\Gamma)]}}
\newcommand{\del}{{\partial}}
\theoremstyle{plain}
\newtheorem{thm}{Theorem}[section]
\newtheorem{prop}[thm]{Proposition}
\newtheorem{lemma}[thm]{Lemma}
\newtheorem{cor}[thm]{Corollary}
\theoremstyle{definition}
\newtheorem{defin}[thm]{Definition}
\newtheorem{example}[thm]{Example}
\newtheorem{def/ex}[thm]{Definition/Example}
\theoremstyle{remark}
\newtheorem{rem}[thm]{Remark}
\newtheorem{rems}[thm]{Remarks}
\newcommand{\refS}[1]{Section~\ref{S:#1}}
\newcommand{\refT}[1]{Theorem~\ref{T:#1}}
\newcommand{\refC}[1]{Corollary~\ref{C:#1}}
\newcommand{\refP}[1]{Proposition~\ref{P:#1}}
\newcommand{\refD}[1]{Definition~\ref{D:#1}}
\newcommand{\refE}[1]{equation~$(\ref{E:#1})$}
\begin{document}


\title[Integrals and cohomology of homotopy links]{Configuration space integrals and the cohomology of the space of homotopy string links}


\author{Robin Koytcheff}
\address{Department of Mathematics, Brown University, Providence, RI}
\email{robink@math.brown.edu}
\urladdr{http://www.math.brown.edu/\~{}robink}

\author{Brian A. Munson}
\address{Department of Mathematics, U.S. Naval Academy, Annapolis, MD}
\email{munson@usna.edu}

\author{Ismar Voli\'c}
\address{Department of Mathematics, Wellesley College, Wellesley, MA}
\email{ivolic@wellesley.edu}
\urladdr{http://palmer.wellesley.edu/\~{}ivolic}

\subjclass[2010]{Primary: 57Q45; Secondary: 57M27, 81Q30, 57R40}
\keywords{configuration space integrals, links, homotopy links, finite type invariants, chord diagrams, weight systems}

\thanks{The first author was supported by the National Science Foundation grant DMS 1004610.  The third author was supported in part by the National Science Foundation grant DMS 0805406.}


\begin{abstract}
Configuration space integrals have been used in recent years for studying the cohomology of spaces of (string) knots and links in $\mathbb{R}^n$ for $n>3$ since they provide a map from a certain differential graded algebra of diagrams to the deRham complex of differential forms on the spaces of knots and links.  We refine this construction so that it now applies to the space of homotopy string links -- the space of smooth maps of some number of copies of $\mathbb{R}$ in $\mathbb{R}^n$ with fixed behavior outside a compact set and such that the images of the copies of $\R$ are disjoint -- even for $n=3$.   We further study the case $n=3$ in degree zero and show that our integrals represent a universal finite type invariant of the space of classical homotopy string links.  As a consequence, we deduce that Milnor invariants of string links can be written in terms of configuration space integrals.
\end{abstract}

\maketitle

\tableofcontents

\parskip=4pt
\parindent=0cm


\section{Introduction}\label{S:Intro}



This paper is concerned with the study of the cohomology of the space of homotopy string links (or long homotopy links) $\HLk_m^n$ using \emph{configuration space integrals}, also known as \emph{Bott-Taubes integrals}.  This is the space of smooth maps of $m$ copies of $\R$ in $\R^n$ where the images of the various copies of $\R$ are disjoint and where the map is fixed outside some compact set (see \refD{(Ho)Links}).  Our main results are  

\begin{enumerate}
\item[(i)] For $m\geq 1$ and $n\geq 4$, there exists a certain differential algebra of diagrams $\HLD^d_k$, bigraded by two natural numbers called the \emph{defect} $d$ (or \emph{degree} in the terminology of \cite{CCRL}) and \emph{order} $k$.  There exists a differential algebra map
\begin{equation}\label{E:HolinkMapIntro}
I_{\HLk}\colon \HLD^d_k \longrightarrow \Omega^{k(n-3)+d}(\HLk_m^n),
\end{equation}
where $\Omega^*$ stands for the deRham complex of differential forms (\refT{IntegralsAreAlgebraMaps}).  Defining the main degree in $\HLD^d_k$ to be $k(n-3)+d$ makes $\HLD$ into a (singly graded) differential graded algebra and the above map into a map of differential graded algebras.
When the defect $d=0$, the induced map in cohomology is injective.
\item[(ii)]  For $m\geq1$ and $n=3$, we get a similar map for defect $d=0$ (which coincides with main degree zero when $n=3$):
\begin{equation}\label{E:HolinkMapIntro_n3}
I_{\HLk}\colon \HLD_k^0 \longrightarrow \Omega^0(\HLk_m^n),
\end{equation}
which takes closed forms to closed forms and is injective in cohomology.  This map produces all finite type invariants of homotopy string links (\refT{UniversalFTHoLinks}).
\item[(iii)] As a consequence of the previous result, we can express Milnor invariants of homotopy string links in $\R^3$ (\refT{Milnor}) completely in terms of configuration space integrals.  Using the weight systems for these invariants, we can explicitly write down these formulae up to lower order finite type invariants (which themselves can be expressed as configuration space integrals).
\end{enumerate}


The first two results parallel those for string (i.e.~long) knots $\mathcal{K}^n$, i.e.~embeddings of $\R$ in $\R^n$ \cite{BT, CCRL, CCRL:Struct, Thurs}.  More generally, they parallel results for string links $\Lk_m^n$, i.e.~embeddings of $m$ copies of $\R$ in $\R^n$, where all maps are always prescribed outside some compact set.  
In the process of obtaining our results, we provide an erratum to \cite{V:B-TLinks}, which considered the case of string links.
At the same time, these results are also very different from the case of string knots/links. To explain, we first briefly review the standard construction of the  map  
\begin{equation}\label{E:OldLinkMapIntro}
\overline{I}_{\Lk}\colon \LD\longrightarrow \Omega^*(\Lk_m^n)
\end{equation}
corresponding to that in \eqref{E:HolinkMapIntro} and is familiar from the literature \cite{CCRL, V:B-TLinks}.  In particular, $\LD$ is a familiar diagram complex associated to the space of string links.

To produce forms on $\Lk_m^n$, one first creates fiber bundles of configuration spaces over this space. Each bundle depends on a diagram in $\LD$.  A diagram has vertices that abstractly represent configurations of points on and off a link, and its edges prescribe a way to pull back copies of the volume $(n-1)$-form from the sphere $S^{n-1}$ to the total space of the bundle.  We then integrate this pullback form along the fiber, thereby producing a form on $\Lk_m^n$.  One of the main reasons this construction works is that ordinary embedded links behave well with respect to restriction, i.e.~the restriction map for links is a fibration by the Isotopy Extension Theorem.  

The situation is different for $\HLk_m^n$ because homotopy links are not embeddings and the restriction map is far from a fibration (see Section \ref{S:ProblemBundlesVertices}).  Thus the obvious generalization of the above fails to extend to $\HLk_m^n$.  The  main contribution of this paper is a refinement of the construction of the fiber bundles which makes it possible to integrate over $\HLk_m^n$. The short explanation of this refinement is that, in the construction of $\overline{I}_{\Lk}$, only vertices of the diagram determine the bundle, while in our construction, both vertices and edges are relevant.  This leads to breaking up the diagram according to its ``grafts" (see \refD{GraftComp} and \refD{graft}) and the construction of what is essentially a product bundle over the set of graft components.  In this fashion we construct a new map
\begin{equation}\label{E:LinkMapIntro}
I_{\Lk}\colon \LD\longrightarrow \Omega^*(\Lk_m^n),
\end{equation}
identify a subcomplex $\HLD\subset\LD$, exhibit the map from equation \eqref{E:HolinkMapIntro}, and show that the   diagram
$$
\xymatrix{
\HLD   \ar@{^{(}->}[r] \ar[d]_{I_{\HLk}} &  \LD \ar[d]^{I_{\Lk}} \\
\Omega^*(\HLk_m^n) \ar[r] &  \Omega^*(\Lk_m^n)
}
$$
commutes.
After we define $I_{\Lk}$ and show how it restricts to the map $I_{\HLk}$, we show in \refP{SameForms} that the old integration map $\overline{I}_{\Lk}$  and our map $I_{\Lk}$ produce the same form.  Thus our construction is indeed a refinement of the one considered by others.

One interesting attribute of our construction of $I_{\HLk}$ is that this map can be defined even when $n=3$, which is not the case with $I_{\Lk}$.  The reason is that the issue of the vanishing of the integration along a certain part of the boundary of the bundle, the so-called \emph{anomalous faces},  is not present for homotopy links (see Remark \ref{R:NoAnomalous}).  This is potentially an exciting feature since it means that the map $I_{\HLk}$ might contain interesting information about the topology of the space of classical homotopy string links.


The anomalous face also makes an appearance in the study of finite type invariants of knots and links via configuration space integrals \cite{Thurs, V:SBT, V:B-TLinks}.  As stated in (ii) above, we extend this study to the case of homotopy string links.  The difference is that, for (string) knots and links, these integrals represent a universal finite type invariant only up to an indeterminacy due to the non-vanishing of anomalous faces (see Section \ref{S:AnomalousFix}).  However, this is not a problem for homotopy string links and in \refT{UniversalFTHoLinks} we give the correspondence between weight systems (functionals on diagrams with defect zero satisfying some relations) and finite type invariants of homotopy string links without any indeterminacy.  

\refT{UniversalFTHoLinks} connects to other work that has been done on finite type invariants of homotopy string links.  To show that $I_{\HLk}$ represents the universal finite type invariant of homotopy string links, we first show that the zeroth cohomology of the complex $\HLD$ gives a certain vector space of diagrams that has already been studied \cite{BN:HoLink, Mellor-HoInv, MelThurs-HoInv, V:B-TLinks}.  Our construction, however, is dictated by geometry -- we have arrived at $\HLD$ by looking for spaces we could integrate over to get forms on $\HLk_m^n$.  Further, we are concerned with all $n\geq 3$, and for $n=3$ and degree zero (which is also defect zero) we happen to have obtained the ``correct" diagrams and relations.  This means that our approach is indeed a generalization, with a new perspective, of existing work.

Since Milnor invariants of homotopy string links are known to be finite type, \refT{UniversalFTHoLinks} immediately gives a novel construction for Milnor invariants entirely in terms of configuration space integrals (as mentioned in (iii)).    Further, some connections between tree diagrams and Milnor invariants arise naturally from our construction, and this will be pursued in future work.
More details about the planned work on Milnor invariants are given in \refS{MilnorInvariants}.

The philosophy in this paper is thus to reconstruct all the ingredients of the map \eqref{E:LinkMapIntro}, but in an improved and refined fashion, and then show at every important instance of the construction how everything works when one restricts to the case of homotopy string links.  Consequently, we have had to be precise and detailed about the definition and structures in the diagram complex $\LD$, the fiber bundles mentioned earlier, the defect zero case, etc.  This has required us to fill in some of the details that have been missing from the literature.  Some instances of this are:
\begin{itemize}
\item the graph complex $\LD$ is now defined purely combinatorially (it had largely been done through pictures before, and mainly for the case of knots);
\item the correspondence between the shuffle product on $\LD$ and the wedge product on $\Omega^*(\Lk_m^n)$ is elucidated;
\item the STU and IHX relations  in defect zero are derived from the graph complex;
\item essentially all the details of the proof that configuration space integrals represent a universal finite type invariant of embedded string links and homotopy string links are given (the most complete proof for knots is in \cite{V:SBT}; in particular, our work provides an erratum to \cite{V:B-TLinks}, which treated the string links case);
\end{itemize}
In addition, the work here unifies and extends many seemingly disparate results in the subject of configuration space integrals (the case $n>3$ is in literature usually treated separately from the case $n=3$).   All of this makes for a self-contained and thorough treatment of how configuration space integrals are used in knot and link theory.  We hope that in addition to establishing some new and useful results, this paper will serve as a practical and a beneficial introduction to the subject.

Finally, it is worth noting where the results from this paper fit into the larger program of studying homotopy string links (and embedded string links) in the context of \emph{manifold calculus of functors}.  To that end, the second and the third author have developed its multivariable version \cite{MV:Multi}, as well a cosimplicial model for the functor calculus Taylor tower for homotopy string links \cite{MV:Links}.  Using this model, the plan is to show that the map $I_{\Lk}$ factors through the Taylor tower and that this tower classifies finite type invariants.  This can hopefully be used to reprove the Habegger-Lin classification of homotopy links \cite{HabLin-Classif} as well as to extend some of their results to ordinary links.  
Along the way, the authors plan to study Milnor invariants in the context of manifold calculus, 
which continues the exploration of the connection between configuration space integrals and Milnor invariants, as well as \cite{M:Milnor}, which connects manifold calculus of certain generalizations of homotopy links to generalizations of Milnor invariants.


\subsection{Organization of the paper}


\begin{itemize}
\item In Section \ref{S:Links}, we define the spaces of string links and homotopy string links, make some observations about them, and set some notation and conventions.

\item In Section \ref{S:Diagrams}, we define the diagram complex $\LD$ and its subcomplex $\HLD$.  \refS{LinkGraphs} contains  the detailed definition of $\LD$ and \refS{AlgebraicDiagrams} discusses the differential and the shuffle product on this graded vector space.  The subcomplex $\HLD$ is identified in \refS{HomotopyLinkGraphs}.  In \refS{Degree0}, we show that $\LD^0$ and $\HLD^0$ consist of trivalent diagrams modulo STU and IHX relations, plus an extra relation for $\HLD^0$ (\refP{DegreeZero}).   In that section we also describe the correspondence between trivalent and chord diagrams (\refT{Trivalent=Chord}).  Many examples are given throughout.

\item In Section \ref{S:ChainMap}, we construct the map $I_\Lk$ in several steps.  After reminding the reader about compactifications of configuration spaces in \refS{Compactification}, we first recall in \refS{BundlesVertices} the standard way of building a bundle of compactified configuration spaces over the space of string links from a diagram $\Gamma\in\LD$. In \refS{ProblemBundlesVertices}, we show why this procedure fails to give bundles over the space of homotopy string links.  Guided by how this procedure fails, we then go back to the complex $\LD$, define the graft components of a diagram in \refS{GraftComponents}, and rework the definition of the bundle of configuration spaces based on these components in \refS{BundlesVerticesEdges}.  The upshot is that these new bundles can now be defined over the space of links for any $\Gamma\in\LD$ or over the space of homotopy links for any $\Gamma\in\HLD\subset\LD$.  In \refS{Forms}, we return to the main goal -- producing forms on the space of (homotopy) links -- and show how the edges of a diagram give a prescription for pulling back the product of volume forms to our bundles.  Finally in \refS{Integrals} we describe how this pullback form can be pushed forward along the fiber of the bundle to $\Lk_m^n$ or $\HLk_m^n$ and give some examples.  \refP{SameForms} states that the forms obtained using the standard definition of the bundles over $\Lk_m^n$ and using our refined one are the same.  This allows us to unify the old configuration space integral approach for string links with a new one for homotopy string links. Our \refT{IntegralsAreAlgebraMaps} in \refS{AlgebraMap} shows that this integration is compatible with all the structure on $\LD$.

\item In Section \ref{S:FTlinks}, we study the case of classical homotopy string links ($n=3$) and prove that configuration space integrals represent a universal finite type invariant for this space (\refT{UniversalFTHoLinks}).  We begin by discussing in \refS{AnomalousFix} the anomalous face mentioned above and then review finite type theory and its connection to the combinatorics of chord diagrams in \refS{FTInvariants}.  \refS{IntegralsAndFT} is finally devoted to the proof of \refT{UniversalFTHoLinks}. In \refS{MilnorInvariants}, we deduce some quick consequences of \refT{UniversalFTHoLinks} in regard to Milnor invariants.  That section is meant to set the stage for the further study of Milnor invariants using configuration space integrals.

\end{itemize}


\subsection{Acknowledgements}


We would like to thank Phil Hirschhorn, Greg Arone, Blake Mellor, and Tom Goodwillie for helpful conversations.  We are also indebted to Victor Turchin for his careful reading of an early draft of this paper.  We thank the referee for numerous useful comments.
%
The second author would like to thank Wellesley College for their hospitality, as this work was partially completed during his stay there. The third author would like to thank the University of Virginia's Department of Mathematics for its hospitality; this paper was partially written while he was on leave there.

\section{Spaces of string links and homotopy string links}\label{S:Links}


In this section, we define the spaces of string links and homotopy string links and set some conventions.

\subsection{Definitions and basic facts}

\begin{defin}\label{D:MapSpace}
Let $m\geq 1$, $n\geq 2$ be integers. Let $\sqcup_m\R$ denote the disjoint union of $m$ copies of the real line.  Let $\Map_c(\sqcup_m\R,\R^n)$ denote the space of smooth maps $\sqcup_m\R\to\R^n$ which outside some compact subset of $\sqcup_m\R$ 
agree with 
the map which on the $i$th copy of $\R$ is given by 
$$t\longmapsto \left(t,|t|\left(\frac{m+1}{2}-i\right) ,0,0,\ldots,0\right).$$ 
This space is endowed with the $\mathcal{C}^\infty$ topology.
\end{defin}

The following is clear.

\begin{prop}\label{P:mapsarecontractible}
$\Map_c(\sqcup_m\R,\R^n)$ is contractible.
\qed
\end{prop}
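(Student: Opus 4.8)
The plan is to show that $\Map_c(\sqcup_m\R,\R^n)$ deformation retracts onto the single point given by the standard map of Definition~\ref{D:MapSpace}, via a straight-line homotopy. Write $\sigma\colon\sqcup_m\R\to\R^n$ for the fixed standard map (whose restriction to the $i$th copy of $\R$ is $t\mapsto(t,|t|(\tfrac{m+1}{2}-i),0,\dots,0)$), and let $f$ be an arbitrary element of $\Map_c(\sqcup_m\R,\R^n)$. By definition there is a compact set $K_f\subset\sqcup_m\R$ outside of which $f$ agrees with $\sigma$. Define
\begin{equation}\label{E:contractionhomotopy}
H(f,s) = (1-s)f + s\sigma, \qquad s\in[0,1].
\end{equation}

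First I would check that $H(f,s)$ lands in $\Map_c(\sqcup_m\R,\R^n)$ for every $s$: outside $K_f$ we have $f=\sigma$, hence $H(f,s)=(1-s)\sigma+s\sigma=\sigma$ there, so $H(f,s)$ agrees with $\sigma$ outside the \emph{same} compact set $K_f$; and $H(f,s)$ is visibly smooth since it is an affine combination of smooth maps. Next I would verify the boundary conditions $H(f,0)=f$ and $H(f,1)=\sigma$, which are immediate from \eqref{E:contractionhomotopy}. Finally I would check continuity of $H$ as a map $\Map_c(\sqcup_m\R,\R^n)\times[0,1]\to\Map_c(\sqcup_m\R,\R^n)$ in the $\mathcal{C}^\infty$ topology: since $f\mapsto f-\sigma$ identifies $\Map_c(\sqcup_m\R,\R^n)$ with the vector space of compactly supported smooth maps $\sqcup_m\R\to\R^n$ (with the $\mathcal{C}^\infty$ topology, i.e.\ uniform convergence of the map and all its derivatives on compact sets), the homotopy becomes $(g,s)\mapsto(1-s)g$, which is continuous because scalar multiplication is continuous in any topological vector space and all the relevant seminorms scale linearly.

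The construction shows $\Map_c(\sqcup_m\R,\R^n)$ is contractible, and in fact convex (hence an affine space), so no step is a genuine obstacle; the only point requiring a moment's care is confirming that the straight-line homotopy does not enlarge the compact set outside of which one has the prescribed behavior — and as noted above it does not, since the standard map is a fixed common value of $f$ and $\sigma$ there. This is exactly why the hypotheses were set up with a single fixed map $\sigma$ (rather than allowing each copy of $\R$ its own affine germ independently of the others): convexity of the affine subspace of $\Map(\sqcup_m\R,\R^n)$ cut out by ``agrees with $\sigma$ near infinity'' is what makes the argument trivial. Hence the proposition follows.
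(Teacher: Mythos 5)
Your proof is correct, and it is exactly the argument the paper has in mind: the paper simply declares the proposition "clear" (stating it with a \qed and no proof), the point being that the space is convex — the straight-line homotopy to the fixed standard map $\sigma$ stays in the space because $f$ and $\sigma$ already agree outside a compact set. Nothing further is needed.
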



\begin{defin}\label{D:(Ho)Links}\ \ 
\begin{itemize}
\item Let $\Lk_m^n\subset \Map_c(\sqcup_m\R,\R^n)$ denote the space of \emph{string (or long) links in $\R^n$ with $m$ strands}. It consists of those maps $L\in\Map_c(\sqcup_m\R,\R^n)$ which are smooth embeddings (one-to-one maps whose derivatives are of maximal rank everywhere). A path in this space is called an \emph{isotopy}. 
\item Let $\HLk_m^n\subset \Map_c(\sqcup_m\R,\R^n)$ denote the space of \emph{string (or long) homotopy links in $\R^n$ with $m$ strands}. It consists of those maps $H\in\Map_c(\sqcup_m\R,\R^n)$ such that if $x$ and $y$ are points in distinct copies of $\R$, then $H(x)\neq H(y)$. A path in this space is called a \emph{link-homotopy}.
\end{itemize}
\end{defin}

Note that $\HLk_m^n$ is an example of a space of \emph{link maps}, studied by the second author in \cite{GM:LinksEstimates,  M:LinkNumber, M:Milnor} from the perspective of the manifold calculus of functors; one motivation for the current work was to continue this thread of inquiry.

Throughout the paper, we will often drop the adjectives ``string" and ``long", and refer to these objects as ``links'' and ``homotopy links''. Each link and homotopy link is oriented in the sense that all copies of $\R$ are given the usual orientation.  The images of the copies of $\R$ will be called \emph{strands}.

In the literature, a more common picture for string links is to take the $i$th copy of $\R$ to $t \mapsto (t,i,0,0, \ldots, 0)$ outside the fixed compact set.  There is a clear correspondence between these string links and string links under our definition; in particular we think of the unlink as in Figure \ref{Fig:parallel}.  We have chosen our definition for technical reasons related to defining configuration space integrals for string links.  This technicality is also related to an error in \cite{V:B-TLinks} which we will correct.

\begin{figure}[h]
\input{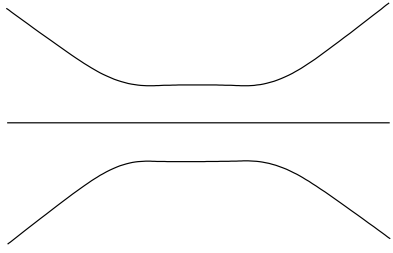}
\caption{The string unlink with three strands.}
\label{Fig:parallel}
\end{figure}

The following corollary is immediate from \refP{mapsarecontractible}.

\begin{cor}\label{C:HLK1isContractible}
$\HLk_1^n$ is contractible. 
\qed
\end{cor}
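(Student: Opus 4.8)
The plan is to reduce the statement directly to \refP{mapsarecontractible}. The key point is that when $m=1$ there do not exist two points lying in distinct copies of $\R$, so the condition in \refD{(Ho)Links} that $H(x)\neq H(y)$ whenever $x$ and $y$ lie in distinct copies of $\R$ holds vacuously for every $H\in\Map_c(\R,\R^n)$. Consequently the inclusion $\HLk_1^n\subset\Map_c(\R,\R^n)$ is in fact an equality of spaces, and \refP{mapsarecontractible} immediately gives that $\HLk_1^n$ is contractible.

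There is no real obstacle here; the only thing worth recording is that this reasoning fails for $m\geq 2$, where the disjointness condition on the strands is genuinely restrictive and $\HLk_m^n$ is a proper subspace. If one prefers an explicit contraction rather than quoting \refP{mapsarecontractible}, one can take the standard long unknot $L_0\colon t\mapsto (t,0,0,\dots,0)$ (the map that agrees with the prescribed behavior of \refD{MapSpace} everywhere when $m=1$) and use the straight-line homotopy $H_s=(1-s)H+sL_0$, which remains in the affine space $\Map_c(\R,\R^n)$ for all $s\in[0,1]$; for $m=1$ every stage automatically satisfies the (vacuous) homotopy-link condition. Either way the corollary is immediate from the preceding proposition.
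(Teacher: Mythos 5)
Your proof is correct and is exactly the argument the paper intends: for $m=1$ the defining condition of a homotopy link is vacuous, so $\HLk_1^n=\Map_c(\R,\R^n)$ and \refP{mapsarecontractible} applies. The explicit straight-line contraction is a fine (if unnecessary) bonus and is consistent with the fixed behavior outside a compact set.
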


In Section \ref{S:FTlinks} we will be interested in $\Ho^0(\HLk_m^3)$, i.e.~the space of real-valued invariants of $m$-strand homotopy links in $\R^3$, so we discuss deRham forms on link spaces below.  First we make an observation which will be useful in Section \ref{S:FTlinks}.

\begin{rem}
\label{R:HtpyLinks2Links}
By general position, every homotopy link is link-homotopic to an embedded link. Moreover, by the remark following Definition 1.5 in \cite{HabLin-Classif}, we can approximate a link-homotopy between embedded links by one which consists of isotopies and ``crossing changes'' of a strand with itself. A crossing change is a homotopy which takes place in the interior of a ball containing only two segments of a single strand, and the two segments cross during the homotopy. To check that something is an invariant of $\HLk_m^3$, it thus suffices to check that it is an invariant of $\Lk_m^3$ and that it remains unchanged under such crossing changes.  This observation will be used in the proof of \refP{HoLinkIntegralIsFiniteType} and will also allow us to connect the main results of Section \ref{S:FTlinks} to Milnor invariants since these are in fact invariants of embedded links that are also invariant under such crossing changes.
\end{rem}

\subsection{Smooth structure and differential forms}
\label{S:SmoothStructure}

In this subsection, we give a brief sketch of the smooth structure and differential forms on spaces of links.  The space $\Map_c(\sqcup_m\R,\R^n)$ can be given the structure of a smooth paracompact infinite-dimensional manifold (see section 3.1 of \cite{Brylinski:Loop} as well as \cite{PressleySegal:Loop}; strictly speaking, these references treat the case of maps of $S^1$, but the local picture is the same in our case).  Both $\Lk_m^n$ and $\HLk_m^n$ are open subsets of $\Map_c(\sqcup_m\R,\R^n)$, as the latter space has the $\mathcal{C}^\infty$ topology.  Using similar ideas to the ones in the references mentioned, one can (with some effort) give each of these spaces the structure of a paracompact smooth infinite-dimensional manifold.  

Another useful perspective on the smooth structure is via a \emph{diffeology} on a space $X$, which consists of a collection of maps to $X$ from open subsets $U \subset \R^k$, $k\geq 0$, called \emph{plots}, which must satisfy certain conditions.  When $X$ is a smooth manifold, one can take the plots to be precisely all the smooth maps into $X$ \cite{IglesiasZemmour}.
We are interested in the case where $X$ is the infinite-dimensional manifold of smooth maps from a compact manifold $K$ to a manifold $M$.  In this case, this diffeology coincides with the diffeology where a plot $U\to X$ is precisely a smooth map $U\x K \to M$ (see Lemma A.1.7 of \cite{Waldorf:TransgrLoopI}).  In particular, for $X=\Lk_m^n$, a plot $U \to \Lk_m^n$ is a smooth map $U \x (\sqcup_m \R) \to \R^n$ such that each slice $\{u\} \x (\sqcup_m \R) \to \R^n$ is a string link.  A diffeology on $\HLk_m^n$ can be defined similarly.

For any manifold $M$, we let $\Omega^*(M)$ denote the deRham cochain complex of differential forms on $M$.  It is a differential graded algebra where the algebra structure is given by the wedge product of forms. The ground ring for all cohomology groups will be $\R$.  
This complex can be defined even when $M$ is an infinite-dimensional manifold.  For example, one can consider forms on open subsets of the topological vector space on which $M$ is locally modeled and then impose the sheaf condition to construct forms on all of $M$.  
Under certain conditions on $M$ (such as paracompactness), which are satisfied by loopspaces and hence by $\Lk_m^n$ and $\HLk_m^n$, the cohomology of this complex computes the cohomology of $M$.  See section 1.4 of \cite{Brylinski:Loop} for details.

Using the perspective of diffeology, we could equivalently define forms on $M$ using the open sets $U \subset \R^k$ mapping into $M$.  Specifically, a form $\omega$ is an assigment  of a form $\omega_\psi$ on $U$ to each $\psi\colon U \to M$ such that the assignment to a plot arising from a smooth map $h\colon V\to U$ is $h^*\omega_\psi$.  A form $\omega$ in the sense of \cite{Brylinski:Loop} mentioned above gives rise to a form in diffeology by taking $\omega_\psi = \psi^*\omega$.  Conversely, one can reconstruct a form on an infinite-dimensional manifold from its behavior on every finite-dimensional open set mapped into it.  At some point it will be convenient to think of forms on $\Lk_m^n$ and $\HLk_m^n$ in this way, namely as determined by their behaviors on finite-dimensional manifolds mapped into them.

\section{Diagram complexes for the spaces of string links and homotopy string links}\label{S:Diagrams}


In this section we construct a diagram complex $\LD$ and a subcomplex $\HLD$ that will serve as combinatorial prescriptions for producing cohomology classes on spaces of links and homotopy links.  The complex $\LD$ has been considered before \cite{CCRL, V:B-TLinks}, but the definition of $\HLD$ appears to be new.  As mentioned in the Introduction, in this section we also fill in some details in the definition and properties of $\LD$.


\subsection{Diagram complex for the space of string links}\label{S:LinkGraphs}


While reading this section, the reader is encouraged to refer to Figure \ref{Fig:DiagramExample}.

For a set $S$, we let $SP_2(S)$ be the 2-fold symmetric product, 
$$
SP_2(S)=(S\times S)/\Sigma_2,
$$ 
where $\Sigma_2$ acts on the product $S\times S$ by permuting the coordinates. We think of $SP_2(S)$ as the set of nonemtpy subsets of $S$ of cardinality at most two. We denote points in $SP_2(S)$ as sets $\{s_1,s_2\}$ where $s_1,s_2\in S$, with the understanding that the cardinality of this set is one when $s_1=s_2$.

\begin{defin}
A \emph{diagram} $\Gamma$ is a triple 
$$\Gamma=(V(\Gamma),E(\Gamma),b_{\Gamma})
$$ 
where 
\begin{itemize}
\item $V(\Gamma)$ is a finite ordered set called the \emph{vertices} of $\Gamma$;
\item  $E(\Gamma)$ is a finite set called the \emph{edges} of $\Gamma$; and 
\item$b_{\Gamma}:E(\Gamma)\to SP_2(V(\Gamma))$ is a map.
\end{itemize}
For an edge $e\in E(\Gamma)$ with $b(e)=\{v,w\}$, we say that $e$ \emph{joins $v$ with $w$}. When it is clear which diagram $\Gamma$ we are speaking of we will write $(V,E,b)$ in place of $(V(\Gamma),E(\Gamma),b_\Gamma)$.
\end{defin}

The particular diagrams we study have a significant amount of extra structure. As we do not wish to impose cumbersome notation on the reader, we will continue to denote a diagram $\Gamma$ with extra structure as a triple $(V,E,b)$, despite the possible ambiguity. Before describing the extra structures, we need some definitions and terminology.


\begin{defin}
For a diagram $\Gamma=(V,E,b)$ and an edge $e\in E$, an \emph{orientation} of $e$ is a choice of injective map $b(e)\to\{-1,1\}$.
\end{defin}

Note that for an edge $e$ such that $b(e)$ consists of a single vertex, there are still two possible orientations, just as there are in the case where $b(e)$ consists of two distinct vertices.

\begin{defin}\label{D:DiagramPath}
Let $v,w$ be vertices in a diagram $\Gamma=(V,E,b)$. A \emph{path} between $v$ and $w$ is a sequence $\{e_i\}_{i=1}^k$ of edges $e_i$ such that $v\in b(e_1), w\in b(e_k)$ and $b(e_i)\cap b(e_{i+1})\neq\emptyset$ for all $i$. The \emph{length} of a path $\{e_i\}_{i=1}^k$ is equal to $k$, the number elements in the sequence of edges.
\end{defin}

Thus the orientations of edges, if they are present, are ignored for the purposes of defining a path.

One other definition we will have use for later is that of a connected component.

\begin{defin}\label{D:ConnComp}
Let $v$ be a vertex in a diagram $\Gamma=(V,E,b)$. The \emph{connected component of $\Gamma$ containing $v$} is the subdiagram $(V',E',b')$, where $V'$ is the set of all vertices $w$ that can be connected by a path to $v$, $E'$ is the set of all edges that can appear in such paths, and $b'$ is the restriction of $b$. A diagram is called \emph{connected} if it has a single connected component.
\end{defin}


Fix integers $m\geq 1$, $n\geq 3$, and let $I_1,\ldots, I_m$ be copies of the unit interval, each of which we will call a \emph{segment} for short. The space $\sqcup_iI_i$ is an ordered set according to the natural ordering of $\{1,\ldots, m\}$ and the natural ordering of $I$. Thus for $x,y\in\sqcup_iI_i$, $x\leq y$ whenever $x\in I_i$ and $y\in I_j$ and $i<j$, and when $i=j$, $x\leq y$ if this inequality holds under the usual ordering of $I=[0,1]$.

\begin{defin}
\label{D:Diagrams}
Given integers $m\geq 1$, $n\geq 3$ as above, a \emph{link diagram} is a diagram $\Gamma=(V,E,b)$ together with the following extra structure. For the set $V$ of vertices, we have
\begin{itemize}
\item A decomposition 
$$V= V_{seg}\sqcup V_{free}$$
into ordered (possibly empty) sets, the elements of which are called \emph{segment} and \emph{free} vertices respectively. In addition, we require that the induced ordering of $(V_{seg},V_{free})$ as an ordered pair of ordered sets agrees with the ordering of $V$.
\item A decomposition of
$$V_{seg}=V_{seg,1}\sqcup\cdots\sqcup V_{seg,m}
$$ 
into disjoint sets determined by the equivalence class of an injective function $seg:V_{seg}\to\sqcup_i (I_i-\del I_i)$ where $V_{seg,k}=seg^{-1}(I_k-\del I_k)$, and which gives rise to the ordering of $V_{seg}$ according to the ordering of $\sqcup_iI_i$ described above. Two such injections $s,s'$ are equivalent if they give rise to the same decomposition of $V_{seg}$ and the same ordering on each of the sets in this decomposition according to the natural ordering of $\sqcup_iI_i$.
\end{itemize}
For the set $E$ of edges, we have a decomposition 
$$E=E_{chord}\sqcup E_{mixed}\sqcup E_{free}\sqcup E_{loop}
$$ into 
\begin{itemize}
\item \emph{chords}, joining distinct segment vertices;
\item \emph{mixed edges}, joining a free vertex with a segment vertex;
\item \emph{free edges}, joining distinct free vertices; and
\item \emph{loops}, joining a segment vertex with itself,
\end{itemize}
 respectively. Moreover, each free vertex must have a path to a segment vertex. 
 
The \emph{valence} of a vertex $v$ is defined as follows. If $v$ is a free vertex, its valence is the number of edges joining $v$ to another vertex. If $v$ is a segment vertex, it is the number of edges joining $v$ to a vertex other than itself, plus twice the number of loops joining $v$ to itself, plus two. The valence of each vertex in a link diagram is required to be at least three. In addition,
\begin{itemize}
\item If $n$ is even, the set $E$ of edges is ordered.
\item If $n$ is odd, each edge $e\in E$ is oriented.
\end{itemize}
\end{defin}


\begin{rem}
Another terminology for segment and free vertices is ``external" and ``internal", respectively.  This is because, in the case of knots, one has diagrams consisting of only one segment and if one is working with closed knots rather than long ones, the segment is drawn as a circle and free vertices are drawn inside it -- hence ``internal".  The vertices on the circle are then ``external".  We decided that this terminology is misleading for our situation and prefer to call the vertices ``segment" (those represented as lying on the $m$ segments) and ``free" (those not lying on the segments; these abstractly correspond to configuration points that are free to move in $\R^n$).
\end{rem}

We will also distinguish ``arcs'' of a link diagram, which will be important when we define the differential, and should explain our seemingly strange definition of the valence of a segment vertex, as arcs contribute to the valence without counting as edges themselves.

\begin{defin}
For a link diagram $\Gamma$, an \emph{arc} of $\Gamma$ is a pair $(v_1,v_2)$ of distinct segment vertices with $v_1<v_2$ whose images under the injection $seg:V_{seg}\to\sqcup_i(I_i-\del I_i)$ lie in the same segment, and such that the image of no other segment vertex lies between them.
\end{defin}

We assume all possible arcs are present in any link diagram.  Although arcs are not edges, it is useful to treat them as such at times, and so for an arc $a=(v_1,v_2)$ we define $b(a)=\{v_1,v_2\}$.


We pictorially represent a diagram in the plane with the intervals drawn as horizontal line segments, appearing in order from left to right and oriented from left to right, and each vertex as a point and each edge as an arc between vertices. Segment vertices are drawn on the intervals, and we think of arcs as segments in the intervals which lie between adjacent segment vertices. See Figure \ref{Fig:DiagramExample} below.


\begin{figure}[h]
\input{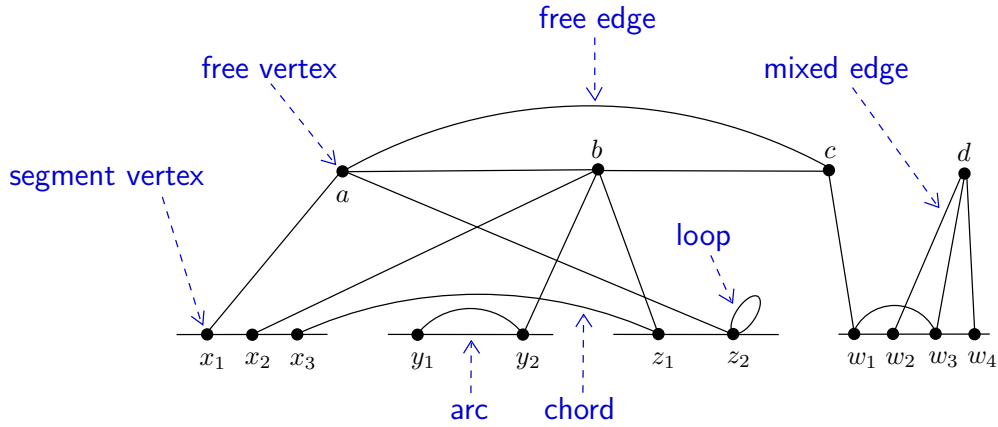}
\caption{A diagram with four segments.   Its edges may be labeled or oriented.  Each vertex is at least trivalent (the valence of, say, vertex $z_2$, is five).  
}
\label{Fig:DiagramExample}
\end{figure}

\begin{defin}
\label{D:IsomorphismOfDiagrams}
Link diagrams $\Gamma=(V(\Gamma),E(\Gamma),b_\Gamma)$ and $\Gamma=(V(\Gamma'),E(\Gamma'),b_{\Gamma'})$ are \emph{isomorphic} if there are order-preserving bijections
$\phi_V:V(\Gamma)\to V(\Gamma')$ and $\phi_E:E(\Gamma)\to E(\Gamma')$ respecting the decomposition of the vertex set such that if $\phi_V^\ast:SP_2(V(\Gamma))\to SP_2(V(\Gamma'))$ denotes the induced map, then the diagram
$$
\xymatrix{
E(\Gamma) \ar[r]^-{b_\Gamma}\ar[d]_{\phi_E} & SP_2(V(\Gamma))\ar[d]^{\phi_V^\ast}\\
E(\Gamma')\ar[r]^-{b_{\Gamma'}} & SP_2(V(\Gamma'))
}
$$
commutes. In addition, if $n$ is odd (so that each edge is oriented), for each edge $e\in\Gamma$, the injections $o_{\Gamma,e}:b_\Gamma(e)\to\{-1,1\}$ and $o_{\Gamma',\phi_E(e)}:b_{\Gamma'}(\phi(e))\to\{-1,1\}$ must satisfy $o_{\Gamma',\phi_E(e)}\circ\phi^*_V=o_{\Gamma,e}$. In this case we say the pair $(\phi_V,\phi_E)$ is an \emph{isomorphism}.

If a pair $(\phi_V,\phi_E)$ are simply bijections (i.e., not necessarily order-preserving) satisfying all of the subsequent properties in addition to $\phi_V$ being order-preserving on the segment vertices, then we say that the pair $(\phi_V,\phi_E)$ is an \emph{isomorphism of unlabeled diagrams}.
\end{defin}

Note that ``order-preserving'' for edges is only relevant when the edge set is ordered. 

\begin{defin}
Define \emph{defect} of a link diagram $\Gamma=(V,E,b)$ to be
\begin{equation}\label{E:Degree}
\defect(\Gamma)  =   2|E|-3|V_{free}|-|V_{seg}|.
\end{equation}
This is the first of the two gradings we will have in our bigraded complex.
\end{defin}

Notice that, because we require the valence of each vertex in a link diagram to be at least three, the defect is nonnegative (there are no free vertices whose valence is less than three and there are no segment vertices whose valence is zero; if it were otherwise, the defect could be made arbitrarily negative). Thus we can think of the defect as a measure of the failure of $\Gamma$ to be trivalent. Indeed, when $\defect(\Gamma) =0$, the segment and free vertices are precisely trivalent (in particular, $\Gamma$ cannot contain loops). We will revisit such diagrams in Section \ref{S:Degree0}. 

\begin{rem}
\label{defect}
The reader acquainted with the work in \cite{CCRL} knows that Cattaneo, Cotta-Ramusino, and Longoni call this number the \emph{degree} of a diagram.  We use the term ``defect" instead, to avoid confusion with the degree of the differential form on the space of links which a diagram gives rise to via the integration map.  Then, as suggested by the referee, we can take this latter degree to be the main degree (Definition \ref{D:MainDegree}) in our bigraded complex, which will make the integration map a map of differential \emph{graded} algebras.
\end{rem}


\begin{defin}\label{D:DiagramSpaces}
When $n$ is even (resp. odd), define $\LD^{d}_{even}$ (resp. $\LD^{d}_{odd}$) to be real vector spaces generated by isomorphism classes of link diagrams $\Gamma$ of defect $d$ modulo subspaces generated by the relations
\begin{enumerate}
\item  If $\Gamma$ contains more than one edge joining two vertices,
then $\Gamma=0$;
\item  If $n$ is odd  and $\Gamma$ and $\Gamma'$ are link diagrams such that a permutation of the vertices of $\Gamma'$ results in a link diagram isomorphic to $\Gamma$, then 
$$\Gamma=(-1)^{\sigma}\Gamma',$$ 
where 
\begin{align*}
\sigma= \ & \text{(order of the permutation vertices)} \\
        + &  \text{(number of edges with different orientation)};
\end{align*}
\item  If $n$ is even and $\Gamma$ and $\Gamma'$ are link diagrams such that a permutation of the vertex and edge sets of $\Gamma'$ result in a link diagram isomorphic to $\Gamma$, then 
$$\Gamma=(-1)^{\sigma}\Gamma',$$ 
where
\begin{align*}
\sigma= \ & \text{(order of the permutation of segment vertices)} \\
        + & \text{(order of the permutation of the edges)}.
\end{align*} 
\end{enumerate}

Finally define the graded vector spaces
$$\LD_{even}=\bigoplus_d \LD^{d}_{even}  \ \ \text{and}\ \ \LD_{odd}=\bigoplus_d \LD^{d}_{odd}.$$ 
\end{defin}

When there is no danger of confusion, i.e.~when $n$ is understood, we will refer to both $\LD^d_{even}$ and $\LD^d_{odd}$ as $\LD^d$ and to both $\LD_{even}$ and $\LD_{odd}$ as $\LD$.

\begin{rem}
The reader might argue that we should simply disallow multiple edges between a given pair of vertices rather than mod out by the subspace of such diagrams, but the differential, defined below, can introduce such edges.
\end{rem}

\begin{rem} Even in a fixed degree and after all the relations are imposed, $\LD^d$ is still an infinite dimensional vector space: Consider for example the diagram consisting of three segments with one segment vertex on each segment, and a single free vertex with three edges which join it to the segment vertices. This is a diagram of degree zero.  Overlaying copies of this diagram (that is, introducing new segment and free vertices and edges in a similar fashion ) gives an infinite list of degree zero diagrams which are clearly independent in the vector space structure.
\end{rem}

\begin{defin}\label{D:order}
Define the \emph{order} of a diagram $\Gamma$ to be
$$
\ord(\Gamma)=|E(\Gamma)|-|V(\Gamma)_{free}|.
$$
This will be the second grading in our bigraded complex.
\end{defin} 
Thus for each $d=\defect(\Gamma)$ and each $k=\ord(\Gamma)$, we have subspaces $\LD^d_k \subset \LD^d$ and $\HLD^d_k \subset \HLD^d$.

Note that in the case
of defect zero, we also necessarily have
$$
\ord(\Gamma)=\frac{1}{2}(|V(\Gamma)_{seg}|+|V(\Gamma)_{free}|).
$$
In general, a diagram $\Gamma$ of defect $d$ and order $k$ satisfies $|V(\Gamma)_{seg}|+|V(\Gamma)_{free}|=2k-d$ and $|E(\Gamma)|=|V(\Gamma)_{free}| + k$.  Hence the number of vertices of such a diagram is fixed, and the number of edges is bounded.  This means that $\LD_k^d$ and $\HLD_k^d$ are finite-dimensional for any $d,k$. 

The following definition of degree of a diagram was suggested by the referee and motivated by the fact it coincides with the degree of the differential form on the link space resulting from applying the integration map to the diagram.

\begin{defin}\label{D:MainDegree}
Define the \emph{(main) degree} of a diagram $\Gamma$ to be
$$
|\Gamma| = (n-1)|E(\Gamma)| - n|V(\Gamma)_{free}| - |V(\Gamma)_{seg}|.
$$
Equivalently, for a diagram $\Gamma$ of defect $d$ and order $k$, 
$$|\Gamma|=k(n-3)+d.$$
We will use this definition to make (singly) graded complexes out of the bigraded complexes $\LD^*_*$ and $\HLD^*_*$.  
\end{defin}

Note that for $n=3$ the degree coincides exactly with the defect.


\subsection{Algebraic structures on the diagram complex}\label{S:AlgebraicDiagrams}


We now discuss the differential and the product on the space of diagrams which will make it into a differential graded algebra.


\subsubsection{The differential}


The differential of a diagram will be a signed sum of diagrams obtained from the original by ``contracting'' certain edges or arcs. We begin with some terminology and conventions.

\begin{defin}
Let $S$ be a nonempty set, and let $s,t\in S$. Define 
$$R_{t\to s}:SP_2(S)\longrightarrow SP_2(S)
$$ 
by 
$$
R_{t\to s}(T)=
\begin{cases}
T, & \mbox{if }t\notin T;\\
(T-\{t\})\cup\{s\}, & \mbox{if }t\in T.
\end{cases}
$$
\end{defin}

Thus the map $R_{t\to s}$ replaces $t$ with $s$. Let $\Gamma=(V,E,b)$ be a link diagram and $e$ be a mixed or free edge of $\Gamma$, or one of its arcs, and suppose $b(e)=\{v,w\}$, where $v<w$ in the ordering of the vertices. In case $e$ is an arc, we suppose it is represented by the pair $(v,w)$. Note that $e$ necessarily joins distinct vertices. 

\begin{defin}
With $\Gamma$ and $e$ as above, define $\Gamma/e=(V',E',b')$ to be the link diagram such that
\begin{itemize}
\item $V'=V-\{w\}$ with the induced ordering of vertices,
\item $E'=E-\{e\}$ with the induced ordering/orientation of edges (if applicable), and
\item $b'=R_{w\to v}\circ b$, restricted to $E'$.
\end{itemize}
\end{defin}

We often refer to $\Gamma/e$ as the diagram $\Gamma$ with the edge/arc $e$ contracted. The function $R_{w\to v}$ above simply replaces an edge joining $w$ with a vertex $u$ with the edge which joins $v$ to $u$ instead. This can create a loop 
in the case of a chord between adjacent segment vertices when the arc between them is contracted. Note that the degree is increased by contraction of a mixed/free edge or arc: if $\Gamma$ has degree $d$, then $\Gamma/e$ has degree $d+1$. The differential is a signed sum of diagrams made from $\Gamma$ by contracting all possible edges and arcs. We will use the ``position'' function to help keep track of these signs.

\begin{defin}
Suppose $S$ is a finite ordered set. Define the \emph{position function} to be the unique order-preserving bijection  
$$\mathrm{pos}:S\to\{1,2,\ldots, |S|\}.
$$ 
When $x\in S$, we write $\mathrm{pos}(x)$ for the value of this function at $x\in S$, or $\mathrm{pos}(x:S)$ when we wish to emphasize the underlying ordered set $S$.
\end{defin}


\begin{defin}
The differential 
\begin{equation}\label{E:DiagramBoundary}
\delta\colon \LD^d_k\longrightarrow \LD^{d+1}_k
\end{equation}
is the unique linear extension to $\LD^d_k$ of the map defined on a diagram $\Gamma$ by
\begin{equation}
\delta(\Gamma)= \sum_{
\text{free edges, mixed edges, and arcs $e$ of $\Gamma$}}\epsilon(e)\Gamma/e.
\end{equation} 

The number $\epsilon(e)$ is equal to $\pm 1$ depending on the parity of $n$ and on the orderings of vertices and edges in the following way: Suppose the free/mixed edge or arc $e$ connects vertices $v$ and $w$.
\begin{itemize}

\item If $n$ is odd and $e$ in an edge or an arc oriented so the edge joins $v$ to $w$, then

\begin{equation}
\epsilon(e)=\begin{cases}
(-1)^{\mathrm{pos}(w:V)}, & v<w,  \\
-(-1)^{\mathrm{pos}(v:V)},  & w<v.
\end{cases}
\end{equation}

\item If $n$ is even and $e$ is a free edge or a mixed edge, then 
\begin{equation}\label{E:EvenSign}
\epsilon(e)=(-1)^{\mathrm{pos}(e:E)+|V_{free}|+1}, 
\end{equation}
and if $e$ is an arc, then 
\begin{equation}
\epsilon(e)=(-1)^{\mathrm{pos}(\max\{v,w\})}.
\end{equation}

\end{itemize}
\end{defin}

An example of the differential is given in Figure \ref{Fig:DifferentialExample}.

\begin{figure}[h]
\input{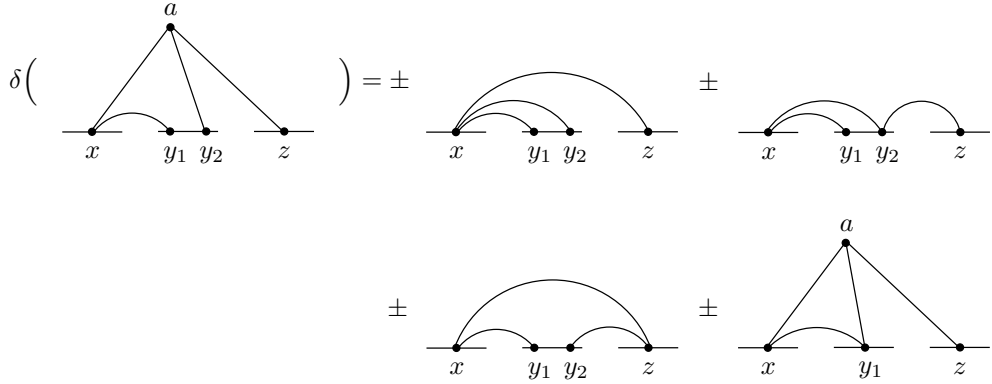}
\caption{An example of the differential. The signs depend on the parity of $n$.}
\label{Fig:DifferentialExample}
\end{figure}

It is easy to see that $\delta$ does indeed raise the defect by 1 and leaves the order unchanged.  This implies that the main degree of $\delta \Gamma$ is $|\delta \Gamma| = |\Gamma| + 1$.  Thus we can turn $\LD^*_*$ into a singly-graded complex (or differential graded algebra) where the term in grading $g$ is given by 
$$\bigoplus_{d,k: \ k(n-3)+d=g} \LD^d_k$$
(and similarly for $\HLD^*_*$).  Since each $\LD^d_k$ is finite-dimensional, each term as above is finite-dimensional.


\subsubsection{The shuffle product}
\label{S:ShuffleProduct}

The shuffle product on the space of diagrams associated to knots was first considered in \cite{CCRL:Struct}.  Here we extend it to link diagrams as well as provide more details about its construction.

Consider two link diagrams $\Gamma_1=(V(\Gamma_1),E(\Gamma_1),b_{\Gamma_1})$ and $\Gamma_2=(V(\Gamma_2),E(\Gamma_2),b_{\Gamma_2})$. Let 
$$seg_i:V(\Gamma_i)_{seg}\longrightarrow \sqcup_i(I_i-\del I_i)$$ be representatives of the equivalence class of the partition function for the segment vertices. Moreover, choose isomorphism class representatives for each diagram so that their vertex and edge sets are disjoint. Call an injective map 
$$
j:V(\Gamma_1)_{seg}\sqcup V(\Gamma_2)_{seg}\longrightarrow \sqcup_i (I_i-\del I_i)
$$ \emph{admissible} if its restriction to $V(\Gamma_i)_{seg}$ is in the same equivalence class as $seg_i$ for $i=1,2$.

\begin{defin}
With $\Gamma_1$ and $\Gamma_2$ and an admissible map $j$ as above, define 
$$
\Gamma_1\cdot_j \Gamma_2=(V(\Gamma_1\cdot_j \Gamma_2),E(\Gamma_1\cdot_j \Gamma_2),b_{\Gamma_1\cdot_j \Gamma_2})
$$ to be the diagram such that 
\begin{itemize}
\item The set $V(\Gamma_1\cdot_j \Gamma_2)=V(\Gamma_1)\sqcup V(\Gamma_2)$;
\item The set $E(\Gamma_1\cdot_j \Gamma_2)=E(\Gamma_1)\sqcup E(\Gamma_2)$, and the orientations (if applicable) for edges are those induced by the orientations of elements of $E(\Gamma_1)$ and $E(\Gamma_2)$;
\item The map $b_{\Gamma_1\cdot_j \Gamma_2}=b_{\Gamma_1}\sqcup b_{\Gamma_2}$;
\item The set $V(\Gamma_1\cdot_j \Gamma_2)$ is decomposed as 
$$V(\Gamma_1\cdot_j \Gamma_2)_{seg}\sqcup V(\Gamma_1\cdot_j \Gamma_2)_{free}
$$ where 
\begin{itemize}
\item $V(\Gamma_1\cdot_j \Gamma_2)_{seg}=V(\Gamma_1)_{seg}\sqcup V(\Gamma_2)_{seg}$, with ordering induced by the injection $j$, 
\item $V(\Gamma_1\cdot_j \Gamma_2)_{free}=V(\Gamma_1)_{free}\sqcup V(\Gamma_2)_{free}$, with ordering induced by the ordered pair $(V(\Gamma_1)_{free},V(\Gamma_2)_{free})$, and hence 
\item $V(\Gamma_1\cdot_j \Gamma_2)$ is ordered by the ordered pair of ordered sets $(V(\Gamma_1\cdot_j \Gamma_2)_{seg},V(\Gamma_1\cdot_j \Gamma_2)_{free})$;
\end{itemize}
\item The ordering of $E(\Gamma_1\cdot_j \Gamma_2)$ is that induced by the ordered pair $(E(\Gamma_1),E(\Gamma_2))$ of ordered sets. 
\end{itemize}
\end{defin}

\begin{defin}\label{D:Shuffle}
For link diagrams $\Gamma_1$ and $\Gamma_2$, define their \emph{shuffle product} $\Gamma_1\bullet\Gamma_2$ by 
\begin{equation}
\Gamma_1\bullet\Gamma_2=\sum_{[\text{admissible }j]}\epsilon(\Gamma_1,\Gamma_2)\Gamma_1\cdot_j\Gamma_2
\end{equation}
where the sum is over equivalence classes of admissible maps, and where
$$\epsilon(\Gamma_1,\Gamma_2)=
\begin{cases}
(-1)^{|E(\Gamma_1)||V(\Gamma_2)_{seg}|}, & \text{$n$ even}; \\
1, & \text{$n$ odd}.
\end{cases}
$$
\end{defin}

From the definition of the main degree $|\Gamma|$, it is easy to see that $|\Gamma_1 \bullet \Gamma_2| = |\Gamma_1| + |\Gamma_2|$.
%
%
%
Moreover, from straightforward unravellings of the definitions, one can prove the following propositions.

\begin{prop}
The shuffle product is graded-commutative; that is, 
$$
\Gamma_1\bullet\Gamma_2=(-1)^{|\Gamma_1||\Gamma_2|}\Gamma_2\bullet\Gamma_1
$$ 
\end{prop}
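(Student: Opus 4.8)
The plan is to establish the identity one admissible map at a time and then sum. Choose isomorphism‑class representatives of $\Gamma_1$ and $\Gamma_2$ with disjoint vertex and edge sets. The first point to record is that the set of equivalence classes of admissible injections $j$ indexing $\Gamma_1\bullet\Gamma_2$ is literally the same as the one indexing $\Gamma_2\bullet\Gamma_1$, since admissibility of $j$ constrains only its restrictions to $V(\Gamma_1)_{seg}$ and to $V(\Gamma_2)_{seg}$ separately. So it suffices, for each fixed such $j$, to compare $\Gamma_1\cdot_j\Gamma_2$ with $\Gamma_2\cdot_j\Gamma_1$ as elements of $\LD$, and then to compare the scalar prefactors $\epsilon(\Gamma_1,\Gamma_2)$ and $\epsilon(\Gamma_2,\Gamma_1)$.

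The key structural observation is that $\Gamma_1\cdot_j\Gamma_2$ and $\Gamma_2\cdot_j\Gamma_1$ have exactly the same underlying triple $(V,E,b)$ and — crucially — the same ordering of the segment vertices, namely the one induced by $j$. They differ only in the ordering of the free vertices (the ordered pair $(V(\Gamma_1)_{free},V(\Gamma_2)_{free})$ versus its transpose) and, when $n$ is even, in the ordering of the edge set (the ordered pair $(E(\Gamma_1),E(\Gamma_2))$ versus its transpose). Hence $\Gamma_2\cdot_j\Gamma_1$ is obtained from $\Gamma_1\cdot_j\Gamma_2$ by a vertex permutation that fixes every segment vertex and block‑transposes the two blocks of free vertices, together with (for $n$ even) a block transposition of the two blocks of edges; a block transposition of sets of sizes $a$ and $b$ is a permutation of parity $ab$. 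Plugging this into relation (2) (for $n$ odd) or (3) (for $n$ even) of \refD{DiagramSpaces}, then summing over the classes $[j]$ and pulling out the prefactors, yields $\Gamma_1\bullet\Gamma_2=(-1)^{e}\,\Gamma_2\bullet\Gamma_1$ for an explicit exponent $e$: for $n$ odd one gets $e\equiv|V(\Gamma_1)_{free}|\cdot|V(\Gamma_2)_{free}|\pmod 2$ (the edge orientations are left unchanged, so they contribute nothing), and for $n$ even a parallel count involving $\epsilon$ and the edge block‑transposition gives $e\equiv|E(\Gamma_1)||V(\Gamma_2)_{seg}|+|E(\Gamma_1)||E(\Gamma_2)|+|E(\Gamma_2)||V(\Gamma_1)_{seg}|\pmod 2$.

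It remains to identify $(-1)^e$ with $(-1)^{|\Gamma_1||\Gamma_2|}$, and this is where I expect the real work to be. One uses the parity identities coming from \refD{MainDegree}: when $n$ is odd, $n-3$ is even, so $|\Gamma|\equiv\defect(\Gamma)\equiv|V(\Gamma)|\pmod 2$; when $n$ is even, $|\Gamma|\equiv|E(\Gamma)|+|V(\Gamma)_{seg}|\pmod 2$. The main obstacle is precisely that $(-1)^e=(-1)^{|\Gamma_1||\Gamma_2|}$ is the point at which the choice of $\epsilon$, the way relations (2) and (3) tally permutations, and the formula for the main degree must all be mutually consistent; one should also verify that in the borderline parity situations the relevant shuffle product in fact vanishes, since the component diagrams $\Gamma_1\cdot_j\Gamma_2$ then admit a self‑symmetry (roughly, interchanging matched pieces of the two factors) whose sign under relation (2)/(3) is $-1$ and therefore kills those diagrams — consistently with graded‑commutativity. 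A useful sanity check throughout is that the integration map $I_{\Lk}$ carries $\bullet$ to the (graded‑commutative) wedge product on $\Omega^*(\Lk_m^n)$ and sends a diagram $\Gamma$ to a form of degree $|\Gamma|$, which both motivates the stated sign and flags bookkeeping errors.
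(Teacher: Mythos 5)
Your setup is the right one and, as far as it goes, correct: the set of equivalence classes of admissible $j$ is visibly symmetric in the two factors; for fixed $j$ the diagrams $\Gamma_1\cdot_j\Gamma_2$ and $\Gamma_2\cdot_j\Gamma_1$ agree except for the block order of the free vertices (and, for $n$ even, of the edges); and feeding this into relations (2)--(3) of \refD{DiagramSpaces} together with the ratio $\epsilon(\Gamma_2,\Gamma_1)/\epsilon(\Gamma_1,\Gamma_2)$ yields exactly the exponents you record, namely $e\equiv f_1f_2$ for $n$ odd and $e\equiv |E(\Gamma_1)||E(\Gamma_2)|+|E(\Gamma_1)|s_2+|E(\Gamma_2)|s_1$ for $n$ even, where $f_i=|V(\Gamma_i)_{free}|$ and $s_i=|V(\Gamma_i)_{seg}|$. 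The paper gives no proof (it calls this a ``straightforward unravelling''), and this is surely the intended unravelling.

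The gap is precisely the step you flag, and the mechanism you propose for closing it does not work. The discrepancy between $e$ and $|\Gamma_1||\Gamma_2|$ is $f_1s_2+s_1f_2+s_1s_2$ for $n$ odd and $s_1s_2$ for $n$ even, and this is frequently odd for pairs of diagrams on which nothing forces the product to vanish. Concretely, for $n=3$ let $\Gamma_1$ be a tripod (one free vertex joined to segment vertices on strands $1,2,3$) and $\Gamma_2$ a tripod on strands $1,2,4$. Both have $|\Gamma_i|=0$, so the asserted sign is $+1$; but your per-$j$ computation gives $\Gamma_2\cdot_j\Gamma_1=-\Gamma_1\cdot_j\Gamma_2$ for each of the four shuffles (a single transposition of the two free vertices), hence $\Gamma_2\bullet\Gamma_1=-\Gamma_1\bullet\Gamma_2$. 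Your proposed rescue --- an odd self-symmetry of the individual terms $\Gamma_1\cdot_j\Gamma_2$ --- fails here: in each term the two free vertices are distinguished by whether they attach to strand $3$ or strand $4$, so these diagrams admit no nontrivial automorphisms, and the four terms lie in distinct isomorphism classes, so their sum does not obviously vanish. To finish one would need either a genuine cancellation lemma pairing up \emph{different} shuffles $j$ (this is what saves the case $\Gamma_1\cong\Gamma_2$, but it is not available here), or a re-examination of the grading and of the sign conventions in relations (2)--(3); the sanity check via $I_{\Lk}$ only pins down the sign on the part of $\LD$ where $I_{\Lk}$ is known to be injective, which does not cover this situation. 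As written, your argument proves $\Gamma_2\bullet\Gamma_1=(-1)^{e}\,\Gamma_1\bullet\Gamma_2$ for the explicit $e$ above, but not the identity $(-1)^{e}=(-1)^{|\Gamma_1||\Gamma_2|}$ that the Proposition asserts, and that identity is exactly where the difficulty lies.
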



\begin{prop}
The differential $\delta$ is a derivation with respect to the shuffle product. That is,
$$
\delta(\Gamma_1\bullet\Gamma_2)=\delta(\Gamma_1)\bullet\Gamma_2+(-1)^{|\Gamma_1|}\Gamma_1\bullet\delta(\Gamma_2).
$$
\end{prop}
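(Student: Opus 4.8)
The plan is to reduce, by (bi)linearity of $\delta$ and $\bullet$, to the case where $\Gamma_1$ and $\Gamma_2$ are single link diagrams, and then to expand both sides of the claimed identity directly from the definitions of the differential and the shuffle product and match up the terms. The structural fact driving everything is that $V(\Gamma_1\cdot_j\Gamma_2)=V(\Gamma_1)\sqcup V(\Gamma_2)$, $E(\Gamma_1\cdot_j\Gamma_2)=E(\Gamma_1)\sqcup E(\Gamma_2)$, and $b_{\Gamma_1\cdot_j\Gamma_2}=b_{\Gamma_1}\sqcup b_{\Gamma_2}$, so that every free or mixed edge of $\Gamma_1\cdot_j\Gamma_2$ lies entirely in one of the two factors, whereas the \emph{arcs} of $\Gamma_1\cdot_j\Gamma_2$ (which genuinely depend on the admissible map $j$) fall into three families: arcs with both endpoints in $V(\Gamma_1)_{seg}$, which are exactly the arcs of $\Gamma_1$ not separated by an interleaved segment vertex of $\Gamma_2$; the analogous arcs of $\Gamma_2$; and ``mixed arcs'' joining an adjacent pair consisting of one segment vertex from each factor. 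Accordingly, summing $\delta(\Gamma_1\cdot_j\Gamma_2)$ over the equivalence classes $[j]$, I would aim to show that the terms coming from free/mixed edges and arcs of $\Gamma_1$ reassemble into $\delta(\Gamma_1)\bullet\Gamma_2$, those coming from $\Gamma_2$ into $(-1)^{|\Gamma_1|}\Gamma_1\bullet\delta(\Gamma_2)$, and that the mixed-arc terms cancel among themselves.

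For the first two groups, I would, for each free edge, mixed edge, or arc $e$ of $\Gamma_1$, construct a bijection between the equivalence classes of admissible maps $j$ for $(\Gamma_1,\Gamma_2)$ in which $e$ survives in $\Gamma_1\cdot_j\Gamma_2$ (all of them if $e$ is a free or mixed edge; those not separated by a $\Gamma_2$-vertex if $e$ is an arc) and the equivalence classes of admissible maps for $(\Gamma_1/e,\Gamma_2)$. When $e$ is a free or mixed edge the contraction deletes only a free vertex, so the segment vertices are unchanged and the bijection is the identity; when $e=(v,w)$ is an arc the contraction deletes $w$ from $V(\Gamma_1)_{seg}$ and the bijection is ``delete $w$,'' with inverse ``reinsert $w$ immediately after $v$.'' One then checks directly that under this bijection $(\Gamma_1\cdot_j\Gamma_2)/e=(\Gamma_1/e)\cdot_{j'}\Gamma_2$ as link diagrams with the induced orderings and orientations, and symmetrically on the $\Gamma_2$ side. (Contracting an arc can create a loop, or a double edge if there was already an edge between the two vertices; this is harmless, since the relation setting a diagram with a double edge equal to zero is imposed on both sides of the identity.)

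The bulk of the work is then the comparison of signs. On the $\Gamma_1$ side one needs $\epsilon(\Gamma_1,\Gamma_2)\,\epsilon(e\ \text{in}\ \Gamma_1\cdot_j\Gamma_2)=\epsilon(e\ \text{in}\ \Gamma_1)\,\epsilon(\Gamma_1/e,\Gamma_2)$, and on the $\Gamma_2$ side the same identity with an extra factor of $(-1)^{|\Gamma_1|}$. Both should be obtained by substituting the explicit formulas for $\epsilon(e)$ (the odd and even cases separately) and for $\epsilon(\Gamma_1,\Gamma_2)$: the position of $e$ — or, in the even case for an arc, of the larger of its two endpoints — inside $\Gamma_1\cdot_j\Gamma_2$ differs from its position inside $\Gamma_i$ by the number of objects of the other factor preceding it, and when $n$ is even these shifts telescope against the change in the prefactor $(-1)^{|E(\Gamma_1)||V(\Gamma_2)_{seg}|}$, while when $n$ is odd only orientation data is involved. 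In the $\Gamma_2$ case the residual sign should come out to the parity of $(n-1)|E(\Gamma_1)|-n|V(\Gamma_1)_{free}|-|V(\Gamma_1)_{seg}|$, which is exactly $|\Gamma_1|$.

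It remains to handle the mixed-arc terms, and this is where I expect the only genuine difficulty. Contracting a mixed arc $e=\{v_1,v_2\}$ with $v_1\in V(\Gamma_1)$ and $v_2\in V(\Gamma_2)$ produces a diagram with a segment vertex incident to edges of both factors, which is not of the form $\Gamma'\cdot_{j'}\Gamma''$ and hence contributes to neither term on the right-hand side; so the identity forces these to cancel. The plan is to exhibit the fixed-point-free involution on pairs (equivalence class of admissible map $j$, mixed arc of $\Gamma_1\cdot_j\Gamma_2$) that transposes the adjacent vertices $v_1$ and $v_2$ — admissibility is preserved because the relative order of each factor's own segment vertices is unaffected — to observe that the two contracted diagrams obtained from $j$ and from the swapped map are isomorphic as link diagrams (the merged vertex occupies the same position on its strand in both), and then to verify that the isomorphism sign supplied by relation $(2)$ (when $n$ is odd) or $(3)$ (when $n$ is even) times the ratio of the two relevant values of $\epsilon(e)$ equals $-1$. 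Since $\epsilon(\Gamma_1,\Gamma_2)$ does not depend on $j$, the two terms then cancel. Pinning down this last sign — reconciling the transposition sign in the definition of $\epsilon(e)$ with the reordering sign built into the quotient defining $\LD^d$ — is the crux of the argument; everything else is bookkeeping.
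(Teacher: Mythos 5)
Your structural decomposition is the right one, and since the paper offers no proof of this proposition (it is asserted to follow from ``straightforward unravellings of the definitions''), the skeleton you describe --- free/mixed edges and surviving arcs of each factor reassembling into $\delta(\Gamma_1)\bullet\Gamma_2$ and $(-1)^{|\Gamma_1|}\Gamma_1\bullet\delta(\Gamma_2)$ via the delete/reinsert bijections, with the mixed arcs forced to cancel among themselves --- is surely the intended argument, and you have correctly located the crux. The difficulty is that the crux does not come out the way you assert if you compute with the sign conventions as literally stated. Take $n$ odd, $m=3$, $\Gamma_1$ a single chord from segment $1$ to segment $2$, and $\Gamma_2$ a single chord from segment $1$ to segment $3$, so that $\delta\Gamma_1=\delta\Gamma_2=0$ and the right-hand side vanishes. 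There are two interleavings $j$, $j''$, each contributing exactly one (mixed) arc, on segment $1$. In both orderings the deleted endpoint of that arc occupies slot $2$ of the vertex set, so $\epsilon(e)=(-1)^2=+1$ in both; the two contracted diagrams are identified by the slot-preserving (hence order-preserving) bijection carrying one merged vertex to the other, with all edge attachments and orientations matching, so relation (2) of Definition \ref{D:DiagramSpaces} supplies no compensating sign. The two terms therefore add to twice a nonzero diagram rather than cancelling. This is not an artifact of the example: the sign $(-1)^{\mathrm{pos}(w)}$ sees only the \emph{slot} of the deleted vertex, which is unchanged by your transposition, so the product of the isomorphism sign with the ratio of the two values of $\epsilon(e)$ is $+1$, not $-1$.

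There is a second, independent problem in your $\Gamma_2$-family: for an arc of $\Gamma_2$ the deleted vertex is a segment vertex, and its position in $V(\Gamma_1\cdot_j\Gamma_2)$ differs from its position in $V(\Gamma_2)$ by the number of segment vertices of $\Gamma_1$ that $j$ places before it --- a quantity whose parity varies with $j$ --- so these terms do not all acquire the uniform factor $(-1)^{|\Gamma_1|}$ (they do for free and mixed edges, where the shift is always $|V(\Gamma_1)|\equiv|\Gamma_1|\bmod 2$). None of this means your strategy is wrong: the cancellation you need is forced by the geometry, since under $I_{\Lk}$ the two interleavings correspond to adjacent chambers of the fiber whose common codimension-one wall inherits opposite boundary orientations, so any conventions for which $I_{\Lk}$ is a chain map must produce your $-1$. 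But the verification you defer as ``bookkeeping'' is where the proof actually lives, and it cannot be completed against the conventions as written; you would have to repair them (for instance, make the arc sign depend on the identity of the deleted vertex relative to a reference ordering rather than merely on its slot) and then recheck all three families of terms.
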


Hence

\begin{prop}
The diagram complex $(\LD, \delta, \bullet)$  is a commutative differential graded  algebra (CDGA) with unit and its cohomology $\Ho^*(\LD)$ is thus a  commutative graded algebra.
\end{prop}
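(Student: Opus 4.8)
The two preceding propositions already record graded-commutativity of $\bullet$ and the Leibniz rule for $\delta$ on $\LD$ (it being routine that both operations are well-defined modulo the relations of \refD{DiagramSpaces}), so the plan is to supply the three remaining structural facts — that $\delta^2=0$, that $\bullet$ is associative, and that $\bullet$ has a unit — and then to observe that the cohomology statement follows formally. The unit is the empty diagram $(\emptyset,\emptyset,\emptyset)$, which has defect and order $0$, hence main degree $0$, is killed by no relation of \refD{DiagramSpaces}, and is absorbed by $\bullet$ since there is exactly one admissible interleaving when one factor is empty.

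For $\delta^2=0$ I would run the standard pairwise-cancellation argument for a contraction differential. Expanding the double sum, $\delta^2(\Gamma)=\sum_{e,e'}\epsilon(e)\,\epsilon'(e')\,(\Gamma/e)/e'$, where $\epsilon'(e')$ is the sign of $e'$ computed in the contracted diagram $\Gamma/e$. One groups the terms according to unordered pairs of distinct contractible free/mixed edges and arcs and checks that $(\Gamma/e)/e'=(\Gamma/e')/e$ as link diagrams while the two accompanying signs are opposite; this comes down to tracking how contracting $e$ shifts the values of the position function on the remaining vertices and edges (and, for $n$ odd, leaves the orientation data consistent). The degenerate contributions must also be disposed of: if contracting $e$ turns $e'$ into a loop or makes a pair of edges parallel, the resulting diagram vanishes by relation (1) of \refD{DiagramSpaces}; and when $e,e'$ share a vertex one verifies directly that the two orders of contraction again give the same diagram with opposite sign. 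This is the ``straightforward unravelling of definitions'' alluded to before; its only real content is the sign accounting.

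For associativity I would note that, after picking representatives of the relevant isomorphism classes with pairwise disjoint vertex and edge sets, both $(\Gamma_1\bullet\Gamma_2)\bullet\Gamma_3$ and $\Gamma_1\bullet(\Gamma_2\bullet\Gamma_3)$ are sums, indexed by the same set of equivalence classes of admissible interleavings of $V(\Gamma_1)_{seg}$, $V(\Gamma_2)_{seg}$, $V(\Gamma_3)_{seg}$ along the segments, of the evident diagram built on the disjoint unions $\sqcup_i V(\Gamma_i)$ and $\sqcup_i E(\Gamma_i)$. The remaining point is that the prefactor $\epsilon(\Gamma_1,\Gamma_2)$ of \refD{Shuffle} — which is $1$ for $n$ odd and $(-1)^{|E(\Gamma_1)||V(\Gamma_2)_{seg}|}$ for $n$ even — satisfies the Koszul cocycle identity
$$\epsilon(\Gamma_1,\Gamma_2)\,\epsilon(\Gamma_1\cdot_j\Gamma_2,\Gamma_3)=\epsilon(\Gamma_1,\Gamma_2\cdot_j\Gamma_3)\,\epsilon(\Gamma_2,\Gamma_3),$$
which is immediate from $|E(\Gamma_1\cdot_j\Gamma_2)|=|E(\Gamma_1)|+|E(\Gamma_2)|$ and $|V(\Gamma_2\cdot_j\Gamma_3)_{seg}|=|V(\Gamma_2)_{seg}|+|V(\Gamma_3)_{seg}|$.

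With these in place, $(\LD,\delta,\bullet)$ is a CDGA with respect to the main grading, since $|\Gamma_1\bullet\Gamma_2|=|\Gamma_1|+|\Gamma_2|$, and the final assertion is the usual formal consequence: the Leibniz rule makes the product of two cocycles a cocycle and the product of a cocycle with a coboundary a coboundary, so $\bullet$ descends to an associative, graded-commutative, unital product on $\Ho^*(\LD)$. The hard part of the whole argument is the sign bookkeeping for $\delta^2=0$ — controlling the interaction of contraction with the position function and the edge orientations, and confirming that every degenerate double-contraction term is annihilated by relation (1).
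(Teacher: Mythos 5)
Your proposal is correct and consistent with the paper, which in fact offers no proof at all beyond deducing the proposition ("Hence") from the two preceding propositions and the remark that everything follows from "straightforward unravellings of the definitions"; your writeup simply supplies the omitted routine verifications ($\delta^2=0$ by pairwise cancellation of double contractions with the degenerate terms killed by relation (1), associativity and the Koszul cocycle identity for the sign $\epsilon$, the empty diagram as unit, and the formal descent to cohomology). The sign bookkeeping you flag as the only real content is indeed where the work lies, and your treatment of it is sound.
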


%
%

\begin{rem}
The authors of \cite{CCRL:Struct} use a different grading to make the shuffle product graded-commutative.  It is easy to check that $|\Gamma|$ (which is the degree of the form $\Gamma$ produces) agrees with their grading mod 2.
\end{rem}

\begin{rem}
There is also a coproduct on $\LD$, analogous to the one given in \cite{CCRL:Struct}.  Since we will not use this structure (shuffle product, on the other hand, will be needed in future work), we will only remark that this should give $\LD$ the structure of a Hopf algebra, and the map appearing in \refT{IntegralsAreAlgebraMaps} induces a map of Hopf algebras in (co)homology.
\end{rem}


\subsection{A subcomplex for the space of homotopy string links}\label{S:HomotopyLinkGraphs}


A homotopy string link need not be an embedding. As such, integration over $\HLk_m^n$ will Ênot be possible in as general a way as prescribed on the complex $\LD$ (see Section \ref{S:Bundles} for more details) due to  possible self-intersections of the components of the link.Ê In this section we will identify a subcomplex $\HLD$ of $\LD$ for which it will be possible to carry out the integration and construct elements of $\Omega^*(\HLk^n_m)$.

\begin{defin}\label{D:HoDiagrams}

Define the space of \emph{homotopy link diagrams}, denoted $\HLD$,
to be the subspace of $\LD$ generated by diagrams $\Gamma$ which
\begin{enumerate}
\item contain no loops; and 
\item 
satisfy the condition that if there exists a path between distinct vertices on a given segment, then it must pass through a vertex on another segment.
\end{enumerate}

%
\end{defin}

Some examples of homotopy link diagrams are given in Figure \ref{Fig:HoDiagramExamples}.

\begin{figure}[h]
\input{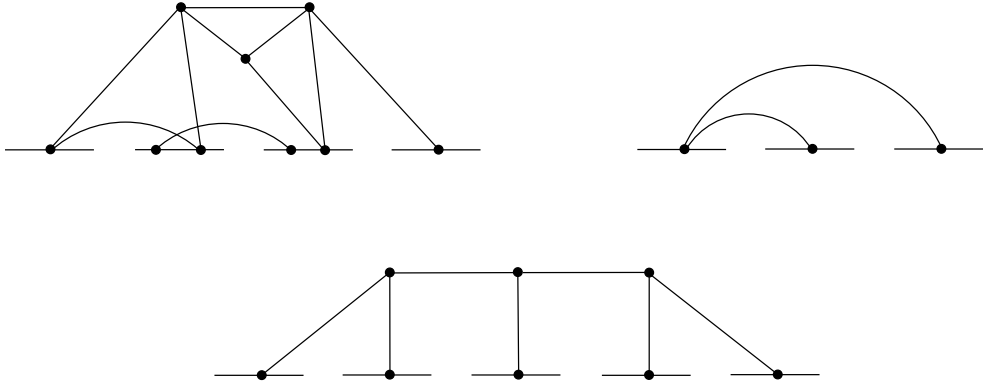}
\caption{Some examples of homotopy link diagrams (without decorations).  The bottom one is a tree of the sort that will give rise to finite type invariants in Section \ref{S:FTlinks}.}
\label{Fig:HoDiagramExamples}
\end{figure}

\begin{prop}
$\HLD$ is a differential subalgebra of $\LD$.
\end{prop}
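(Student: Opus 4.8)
The statement asserts that $\HLD \subset \LD$ is a differential subalgebra, so there are exactly two things to check: that $\HLD$ is closed under the shuffle product $\bullet$, and that it is closed under the differential $\delta$. Since $\HLD$ is already known to be a (graded) subspace, and the ambient object $(\LD,\delta,\bullet)$ is a CDGA, it suffices to verify these closure properties on generators, i.e.\ on link diagrams $\Gamma$ satisfying conditions (1) and (2) of \refD{HoDiagrams}.

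First I would handle the shuffle product. Given two homotopy link diagrams $\Gamma_1,\Gamma_2$ and an admissible map $j$, the diagram $\Gamma_1 \cdot_j \Gamma_2$ has vertex set $V(\Gamma_1)\sqcup V(\Gamma_2)$, edge set $E(\Gamma_1)\sqcup E(\Gamma_2)$, and $b$-map the disjoint union of the two $b$-maps; in particular no new edges are created and no vertices are identified. Hence $\Gamma_1\cdot_j\Gamma_2$ has a loop if and only if $\Gamma_1$ or $\Gamma_2$ does, so condition (1) is preserved. For condition (2): any path in $\Gamma_1\cdot_j\Gamma_2$ uses only edges of $E(\Gamma_1)$ or only edges of $E(\Gamma_2)$ (since the two edge sets meet no common vertex), so a path between two segment vertices of $\Gamma_1\cdot_j\Gamma_2$ is entirely a path in $\Gamma_1$ or entirely a path in $\Gamma_2$; the homotopy condition for whichever $\Gamma_i$ it lies in then forces it to pass through a vertex on another segment. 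Therefore each summand $\Gamma_1\cdot_j\Gamma_2$ lies in $\HLD$, and so does $\Gamma_1\bullet\Gamma_2$.

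The differential is the more delicate case, and I expect it to be the main obstacle. Here $\delta\Gamma = \sum_e \epsilon(e)\,\Gamma/e$, where $e$ ranges over free edges, mixed edges, and arcs, and $\Gamma/e$ contracts $e$, identifying its two endpoints. I must show each $\Gamma/e$ satisfies (1) and (2). The subtlety is precisely that contraction can create loops (as noted in the text: contracting an arc between two adjacent segment vertices joined by a chord produces a loop) and can create forbidden same-segment paths (contracting an arc merges two segment vertices, potentially shortcutting a path that previously escaped to another segment). So the argument must show that for a diagram $\Gamma$ already in $\HLD$, these bad phenomena cannot occur. For condition (1): contracting a free or mixed edge $e=\{v,w\}$ can produce a loop only if there is another edge joining $v$ and $w$ directly (a ``double edge''), but by relation (1) of \refD{DiagramSpaces} such a diagram is zero, so modulo that relation no loop appears --- alternatively, a loop in $\Gamma/e$ at the merged vertex corresponds to an edge of $\Gamma$ joining $v$ and $w$, giving a $2$-cycle through $e$ in $\Gamma$ which, for a mixed edge, would be a chord-plus-mixed or mixed-plus-mixed bigon, again killed by relation (1); contracting an arc $e$ between adjacent segment vertices $v,w$ produces a loop only if there is a chord joining $v$ and $w$, and such a chord together with the arc is a length-$2$ path between two vertices on the same segment that does \emph{not} pass through any other segment, contradicting condition (2) for $\Gamma$. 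For condition (2) in $\Gamma/e$: suppose $\Gamma/e$ has a path $P$ between distinct segment vertices $p,q$ on one segment that avoids all other segments. Lift $P$ to $\Gamma$ by reinserting $e$: the result is a path (possibly through the endpoints $v,w$ of $e$) between $p$ and $q$ --- or between $p,q$ and the merged vertex --- in $\Gamma$; one checks by a short case analysis (depending on whether $e$ is an arc on the same segment as $p,q$, an arc on a different segment, a mixed edge, or a free edge, and whether $v,w$ are segment or free vertices) that the lifted path still connects two distinct segment vertices on a single segment without meeting another segment, contradicting condition (2) for $\Gamma$. Assembling these cases shows every term of $\delta\Gamma$ lies in $\HLD$, hence $\delta(\HLD)\subseteq\HLD$.

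Combining the two closure statements, $\HLD$ is closed under $\bullet$ and $\delta$, and since it is a graded subspace of the CDGA $\LD$ containing the unit, it is a differential subalgebra. $\square$
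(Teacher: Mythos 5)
Your proposal is correct and follows essentially the same route as the paper: closure under the shuffle product is immediate because no new paths of edges are created, and closure under $\delta$ is handled by the same two cases --- loops in $\Gamma/e$ (which reduce to double edges, killed by relation (1) of \refD{DiagramSpaces}, or to a chord between vertices on one segment, already excluded from $\HLD$), and same-segment paths in $\Gamma/e$ (which lift to paths in $\Gamma$ by reinserting $e$). The one point your deferred case analysis must not gloss over --- and which the paper makes explicit --- is that if an endpoint of $e$ were a segment vertex on a segment different from that of $p,q$, then the merged vertex in $\Gamma/e$ would already lie on that other segment and the original path $P$ would violate its own hypothesis, so that case simply cannot arise.
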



\begin{proof}
To show $\HLD$ is a subcomplex of $\LD$, we must show that $\delta(\HLD)\subset \HLD$. Write $\Gamma=(V,E,b)$ and $\Gamma/e=(V',E',b')$, where $e=\{v,w\}$ with $v<w$.  Suppose $\Gamma\in \HLD$.  We want to show each term $\Gamma/e$ appearing in $\delta(\Gamma)$ is in $\HLD$.
Suppose to the contrary that $\Gamma/e$ is not an element of $\HLD$.  There are two cases.  The first case is that $\Gamma/e$ has a loop.  
Then either (a) $\Gamma$ itself has a loop or (b) $\Gamma$ has a chord joining adjacent vertices on a segment or (c) $\Gamma$ has multiple edges between a pair of vertices.  Situations (a) and (b) are impossible since $\Gamma\in HLD$.  In situation (c),  $\Gamma$ is set to zero, so $\delta(\Gamma)$ is also set to zero (so no terms $\Gamma/e$ appear in $\delta(\Gamma)$).  This covers the first case.

The second case is that $v_1,v_2$ are distinct segment vertices lying on the same segment of $\Gamma/e$, and there is a path $\alpha=\{e_i\}_{i=1}^k$ of edges from $v_1$ to $v_2$ which does not pass through a vertex on a different segment than the one on which $v_1$ and $v_2$ lie. In this case it is enough to show that there is a path between vertices on the same segment in $\Gamma$ which also does not pass through a vertex lying on a different segment.

Let $\alpha=\{e_i\}_{i=1}^k$ be a path in $\Gamma/e$ as above, of minimal length.  We have $v_1\in b'(e_1),v_2\in b'(e_k)$ and $b'(e_i)\cap b'(e_{i+1})\neq\emptyset$ for all $i$.  We may assume that $v_1,v_2$ are segment vertices in $\Gamma$, for otherwise $e$ joins $v_1$ or $v_2$ to a segment vertex, and then $\{e,e_1,\ldots, e_k\}$ or $\{e_1,\ldots, e_k,e\}$ is a path in $\Gamma$ joining vertices on the same segment without passing through another segment.   Now if $\alpha$ has the property that $b(e_i)\cap b(e_{i+1})\neq\emptyset$ for all $i$, then $\alpha$ itself is a path between $v_1$ and $v_2$ in $\Gamma$, contradicting the fact that $\Gamma\in\HLD$.  
So 
let $j$ be the smallest integer such that $b(e_j)\cap b(e_{j+1})=\emptyset$.  We have $b'(e_j)\cap b'(e_{j+1})\neq\emptyset$, and $b'=R_{w\to v}\circ b$ for some $v,w$, so necessarily $w\in b(e_j)$ or $w\in b(e_{j+1})$. Without loss of generality assume $w\in b(e_j)$. Then it must be that $v\in b(e_{j+1})$, and in this case the edge $e$ satisfies $b(e_j)\cap b(e)\neq\emptyset$ and $b(e_{j+1})\cap b(e)\neq\emptyset$.  Since $\alpha$ has minimal length, $\{e_1,\ldots, e_j,e,e_{j+1}, \ldots, e_k\}$ forms a path $\alpha'$ in $\Gamma$ between $v_1$ and $v_2$ in $\Gamma$.  
We will be done if we can argue that $w$ cannot be a segment vertex lying on a segment different from $v_1$ and $v_2$ 
But this is clear: if $w$ is such a vertex, then $v$ is such a vertex in $\Gamma/e$, and
the original path $\alpha$ passes through this segment, a contradiction.

That $\HLD$ is closed under the shuffle product is clear since this product does not create new paths of edges.  
\end{proof}

A few words of clarification and justification for \refD{HoDiagrams} are in order.  Our definition of $\HLD$ excludes diagrams which contain a chord connecting two vertices on a single segment.  It also excludes all possible diagrams which, via contractions of edges, might produce such a chord.  
What we are trying to capture geometrically are linking phenomena which ``ignore'' the knotting of each strand. 
The reason for this is simple: there is no knotting of individual strands in $\HLk_m^n$, as they may pass through themselves.  Once integration over diagrams is defined in Section \ref{S:Integrals}, it will be clear that a chord between segment vertices captures something about linking between those segments.  So when the segment vertices lie on the same segment, this means a chord between them captures something about self-linking, or knotting, of that segment.  
Similarly, integrals that correspond to loops will also only contain information about single strands.

\subsection{Diagram complexes in defect zero}\label{S:Degree0}


In Section \ref{S:FTlinks} we will focus on the case $n=3$ of classical links to see which link invariants (elements of $\operatorname{H}^0(\Lk_m^3)$ and $\operatorname{H}^0(\HLk_m^3)$) can be obtained via configuration space integrals from our diagram complexes.  As we will see in Section \ref{S:Integrals}, when $n=3$, defect zero diagrams will correspond to degree zero forms (although in general the degree of forms corresponds to the main degree, not the defect), so we want
\begin{equation}
0 =   2|E(\Gamma)|-3|V(\Gamma)_{free}|-|V(\Gamma)_{seg}|.
\end{equation}
It was already noted in the discussion following equation \eqref{E:Degree} in Section \ref{S:LinkGraphs} that these are precisely the trivalent diagrams.



Let 
\[
\Ho^0(\LD^*_k):=Z^0(\LD^*_k) :=\ker (\delta: \LD^0_k \to \LD^1_k)
\]
denote the subspace of degree zero cocycles (i.e.~degree zero cohomology classes) in the complex $\LD^*_k$.  Similarly, let $\Ho^0(\HLD^*_k)$ denote the subspace of degree zero cocycles (i.e.~degree zero cohomology classes) in the complex $\HLD^*_k$.
(Note that in either case, for $n>3$ this is not the same as the degree zero cocycles of the singly graded complex graded by $|\Gamma|$.)

To understand the kernel of the differential $\delta: \LD^0_k \to \LD^1_k$, we will examine the cokernel of its adjoint (i.e.~dual) $\delta^*: (\LD_k^1)^* \to (\LD_k^0)^*$.  Let 
\[
\Ho_0(\LD^*_k):=\coker (\delta^*: (\LD^1_k)^* \to (\LD^0_k)^*)
\]
and similarly, let $\Ho_0(\HLD^*_k)$ denote the corresponding cokernel in the complex $\HLD^*_k$.

Let $|\Aut(\Gamma)|$ denote the size of the group of automorphisms of $\Gamma$ as an unlabeled diagram. As defined at the end of Definition \ref{D:IsomorphismOfDiagrams}, these are automorphisms of graphs without any labels or edge orientations, but they must fix the segment vertices pointwise.
Consider the inner product $\LD^d_k \otimes \LD^d_k \to \R$ which on diagrams is given by 
$$\langle \Gamma_1, \Gamma_2\rangle = \delta_{\Gamma_1, \Gamma_2} |\Aut(\Gamma_1)|  \,\,(= \delta_{\Gamma_1, \Gamma_2}|\Aut(\Gamma_2)| ),$$ 
where $\delta$ here is the Kronecker $\delta$.
This gives an isomorphism $\xymatrix{ \LD^d_k  \ar[r]^-\cong & (\LD^d_k)^*}$ via $\Gamma \mapsto \langle \Gamma, - \rangle$.  Thus we can represent elements of $(\LD^d_k)^*$ by linear combinations of diagrams.  We will write $\Gamma^*$ for the element of $(\LD^d_k)^*$ which is the image of the diagram $\Gamma$ under this isomorphism.
For the rest of Section \ref{S:Diagrams}, a drawing of a diagram $\Gamma$ will often mean the element $\Gamma^*$.  

To understand $\delta^*: (\LD^1_k)^* \to (\LD^0_k)^*$, note that any diagram in $\LD^1_k$ has precisely one 4-valent vertex, as shown in the top of  Figures \ref{Fig:STURelation}, \ref{Fig:IHXRelation}, and \ref{Fig:1TRelation}. In Figures \ref{Fig:STURelation} and \ref{Fig:1TRelation}, $i$ is a segment vertex, and in Figure \ref{Fig:IHXRelation} it is a free vertex.
It is not hard to see that the adjoint $\delta^*$ ``blows up" four-valent vertices in all possible ways, as shown in these three figures.  In the first two figures, there are ${4 \choose 2}/2=3$ possibilities, corresponding to the possible ways of pairing four vertices.
The image of $\delta^*$ is generated by three types of (linear combinations of) diagrams.  Each type of generator is a sum of the diagrams shown in one of the figures with certain coefficients to be determined.  The signs arise from the labeling conventions associated to edge contractions (in particular recall that free vertices always have higher labels than segment ones, so $i<j$ in the left picture on the bottom of of Figure \ref{Fig:STURelation}).  In the first two of these figures, each diagram resulting from the blowup of a vertex is the same outside of the pictured portions as the other two diagrams in the triple.


\begin{figure}[h]
\input{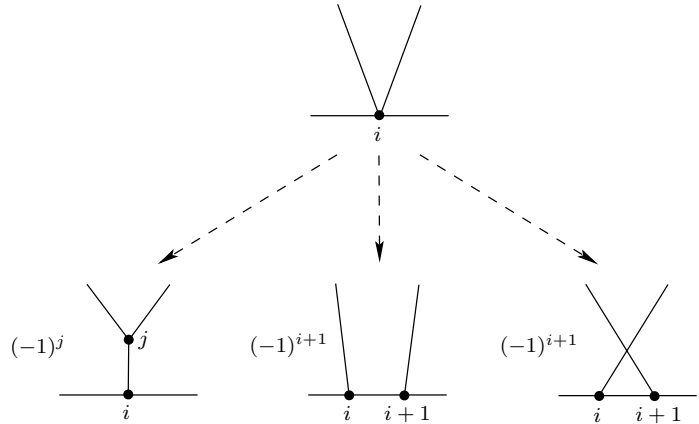}
\caption{Blowups giving rise to the STU relation.}
\label{Fig:STURelation}
\end{figure}

\begin{figure}[h]
\input{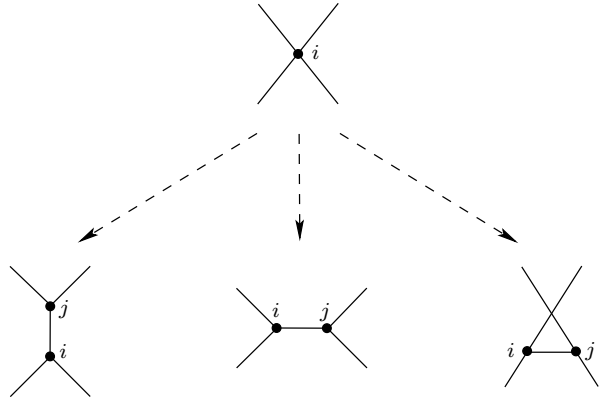}
\caption{Blowups giving rise to the IHX relation.}
\label{Fig:IHXRelation}
\end{figure}

\begin{figure}[h]
\input{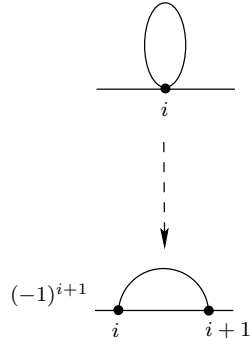}
\caption{Blowup giving rise to the 1T relation.}
\label{Fig:1TRelation}
\end{figure}

Now we determine the coefficients in each sum that gives a generator of the image of $\delta^*$.  Call the three diagrams in Figure \ref{Fig:STURelation} (with the indicated signs) $S$, $T$, and $U$.  Call the three diagrams in Figure \ref{Fig:IHXRelation} (with all signs $+1$) $I$, $H$, and $X$.  Call the diagram in Figure \ref{Fig:1TRelation} $1T$.  

\begin{prop}
\label{P:AutFactors}
The image of $\delta^*$ is generated by elements of the form 
\begin{itemize}
\item
$S^* + T^* + U^*$, 
\item
$ I^* +  H^* + X^*$, and 
\item
$(1T)^*$.
\end{itemize}
\end{prop}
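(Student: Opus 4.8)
The plan is to compute $\delta^*$ explicitly on the dual basis and read off its image. First I would recall the identification: using the inner product $\langle \Gamma_1, \Gamma_2 \rangle = \delta_{\Gamma_1,\Gamma_2}|\Aut(\Gamma_1)|$, we have $\Gamma^* = |\Aut(\Gamma)|\,\widehat{\Gamma}$ where $\widehat{\Gamma}$ is the functional dual to $\Gamma$ in the \emph{naive} dual basis (the one where $\widehat{\Gamma}_1(\Gamma_2) = \delta_{\Gamma_1,\Gamma_2}$). Since $\delta^*$ is the adjoint of $\delta$ with respect to these inner products, the cleanest route is to first compute the adjoint $d^*$ of $\delta$ with respect to the naive dual bases, via $\langle \delta \Gamma, \Gamma' \rangle_{\mathrm{naive}}$, i.e.\ to find, for each defect-one diagram $\Gamma'$ with its unique $4$-valent vertex, all defect-zero (trivalent) diagrams $\Gamma$ and all contractible edges/arcs $e$ with $\Gamma/e \cong \Gamma'$; these are exactly the ``blowups'' of the $4$-valent vertex pictured in Figures~\ref{Fig:STURelation}, \ref{Fig:IHXRelation}, \ref{Fig:1TRelation}. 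Then I would convert back to the $\Gamma^*$ basis by inserting the ratios of automorphism groups.

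Next I would carry out the combinatorial bookkeeping case by case according to the type of $4$-valent vertex in $\Gamma'$. The three cases mirror the three figures: (a) the $4$-valent vertex is a segment vertex that is the endpoint of two chords/mixed edges and two arcs (Figure~\ref{Fig:STURelation}) — blowing it up either separates a segment vertex off along the segment (contracting an arc, giving the $S$ and $T$ diagrams with signs $(-1)^{i+1}$ coming from \eqref{E:EvenSign}-type conventions and the position of the new vertex) or pulls a free vertex off the segment (contracting a mixed edge, giving $U$ with sign $(-1)^j$ since free vertices carry higher labels); (b) the $4$-valent vertex is a free vertex with four incident edges (Figure~\ref{Fig:IHXRelation}) — the three pairings of the four half-edges give $I$, $H$, $X$, each arising by contracting a free edge, and the sign conventions for contracting free/mixed edges conspire to make all three signs agree, which I would take to be $+1$ after the stated normalization; (c) the degenerate case where the $4$-valent vertex has a ``repeated arc'' configuration forcing a one-term relation (Figure~\ref{Fig:1TRelation}), giving just $(1T)^*$. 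In each case the key point is that every summand of $\delta^*$ applied to the relevant dual generator is supported on exactly the diagrams in that figure (outside the pictured local portion the diagrams are identical), so the image is generated by $S^*+T^*+U^*$, $I^*+H^*+X^*$, and $(1T)^*$.

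The main obstacle I anticipate is the sign and automorphism-factor reconciliation: one must check that after multiplying the naive-dual computation by the automorphism ratios $|\Aut(\Gamma)|/|\Aut(\Gamma')|$ — or rather, after expressing everything consistently in the $\Gamma^*$ normalization — the coefficients of the three diagrams in each of the STU and IHX triples become equal (all $+1$ in the chosen normalization), rather than merely proportional. The role of Proposition~\ref{P:AutFactors}'s normalization $\langle \Gamma_1, \Gamma_2\rangle = \delta_{\Gamma_1,\Gamma_2}|\Aut(\Gamma_1)|$ is precisely to absorb the automorphism factors so that $\delta^*$ has these clean $\pm 1$ coefficients; I would make this explicit by noting that if $\Gamma/e \cong \Gamma'$ then the contribution to $\langle \delta\Gamma, \Gamma'^*\rangle$ is $\epsilon(e)|\Aut(\Gamma')|$ while $\langle \delta^*\Gamma'^*, \Gamma^*\rangle$ must equal $\epsilon(e)|\Aut(\Gamma)|$, and these match the coefficient $\epsilon(e)$ in the statement once one writes $\delta^*\Gamma'^* = \sum \epsilon(e)\,(\Gamma/e \text{ preimages})^*$. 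A secondary, more routine obstacle is enumerating carefully that there are no \emph{other} trivalent preimages of a given defect-one diagram beyond those shown — this follows because contracting an edge/arc is injective on the non-collapsed structure, so the preimage is determined by which pair of half-edges at the $4$-valent vertex gets separated, and the four cases of Definition~\ref{D:Diagrams}'s edge types at that vertex give exactly the listed possibilities.
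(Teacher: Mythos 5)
Your setup is right and you have correctly located the crux of the proposition, but the step where you claim to resolve it is circular. Writing $\delta^* F^* = \sum_\Gamma c_\Gamma \Gamma^*$ and pairing with a basis diagram $\Gamma$ gives $c_\Gamma\,|\Aut(\Gamma)| = \langle F, \delta\Gamma\rangle = N_\Gamma\,\epsilon\,|\Aut(F)|$, where $N_\Gamma$ is the \emph{number} of edges/arcs $e$ of $\Gamma$ with $\Gamma/e \cong F$; so $c_\Gamma = N_\Gamma |\Aut(F)|/|\Aut(\Gamma)|$. Your assertion that ``$\langle \delta^*\Gamma'^*, \Gamma^*\rangle$ must equal $\epsilon(e)|\Aut(\Gamma)|$'', i.e.\ that $c_\Gamma = \epsilon(e) = \pm 1$, is exactly the identity $N_\Gamma|\Aut(F)| = |\Aut(\Gamma)|$ that needs to be proved --- and it is \emph{false} as stated. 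In the STU case, when $T \cong U$ one has $|\Aut(V)| = |\Aut(S)| = 2|\Aut(T)|$, so $c_T = 2$, not $1$; the generator still has the advertised form only because $T^* = U^*$ there, so $S^* + 2T^* = S^* + T^* + U^*$. In the IHX case the multiplicity $N_\Gamma$ can genuinely exceed $1$ (several free edges of the same $\Gamma$ contract to $F$, one for each element of the $\Aut(\Gamma)$-orbit of $e$), and one must show that the quantities $N_\Gamma|\Aut(F)|/|\Aut(\Gamma)| = |\Aut(F)|/|\Aut(\Gamma)_e|$ for $\Gamma \in \{I,H,X\}$ are either all equal, or satisfy $2c_I = c_H = c_X$ precisely when $H \cong X$. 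That requires an actual analysis of how automorphisms of $\Gamma$ can move the local subgraph $\Delta$ around $e$ --- in the paper this is the case division into ``all three distinct'' versus ``two isomorphic,'' together with the computation of the restriction groups $G_\Gamma < \Aut(\Delta) \cong D_4$ --- none of which appears in your outline. Your closing sentence, that the normalization by $|\Aut(\Gamma)|$ ``absorbs the automorphism factors,'' is the conclusion of the proposition, not an argument for it.

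A secondary point: even granting the support statement, you should justify that distinct edges $e, e'$ of the same $\Gamma$ contracting to $F$ contribute with the \emph{same} sign $\epsilon$, since otherwise the $N_\Gamma$ terms could partially cancel rather than add; this is needed to identify $\langle F, \delta\Gamma\rangle$ with $N_\Gamma\,\epsilon\,|\Aut(F)|$ in the first place. So the overall strategy (dualize, enumerate blowups, track automorphism factors) matches the intended proof, but the heart of the argument --- the group-theoretic bookkeeping showing the coefficients within each triple coincide even in the degenerate isomorphic cases --- is missing.
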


The statement of this Proposition is certainly not new.  For example, it appears in \cite[Section 3]{Long:Classes}.  It is a consequence of the following folklore result: Given a graph complex whose differential $\delta$ is a signed sum of edge contractions, the image of $\delta^*$ in the dual complex is the signed sum of edge expansions (i.e.~with all coefficients $\pm 1$), provided the duality is given by $\langle \Gamma, \Gamma'\rangle = \delta_{\Gamma, \Gamma'} |\mathrm{Aut}(\Gamma)|$.  (For the $1T$ term, note that, by Definition \ref{D:IsomorphismOfDiagrams}, the diagram with the loop has the same number of automorphisms as the diagram with the isolated chord, i.e.~for the parts pictured, there are no nontrivial automorphisms.)  We could not find a proof of either this more general statement or the statement of Proposition \ref{P:AutFactors} (the``unitrivalent case"), so we prove the Proposition here.

\begin{proof}[Proof of \refP{AutFactors}]
The proof amounts to checking that the linear combination we get from blowing up the 4-valent vertices in Figures \ref{Fig:STURelation} (the STU case), \ref{Fig:IHXRelation} (the IHX case), and \ref{Fig:1TRelation} (the 1T case) is in fact the one with all coefficients equal to 1.

\emph{The 1T case}:  Let $L$ denote the 4-valent diagram in the top of Figure \ref{Fig:1TRelation}.  Here the diagram $1T$ with the isolated chord is the only diagram whose image under $\delta$ contains $L$.  (The only other potential such diagram is not in the complex, since by Definition \ref{D:Diagrams}, loops can only occur at segment vertices.)  Thus  
\begin{equation}
\label{PairingDeltaL}
 \langle \delta^* L^*, \Gamma\rangle = \langle L, \delta \Gamma\rangle = \left \{ \begin{array}{ll} |\Aut (L)|  & \mbox{if } \Gamma = 1T \\ 0 & \mbox{for all other } \Gamma, \end{array} \right.
\end{equation}
recalling that we take the pairing given by $\langle \Gamma_i, \Gamma_j \rangle = \delta_{ij} |\Aut(\Gamma_i)|$.  Since $\Aut(L) \cong \Aut(1T)$, this implies that $\delta^* L^*= (1T)^*$.

\emph{The STU case}:  Let $V$ denote the 4-valent diagram in the top of Figure \ref{Fig:STURelation}.  Note that $\delta$ of any of the three diagrams $S, T, U$ contains precisely one diagram $V$, and that these are the only diagrams whose image under $\delta$ can have such a term.  Thus  
\begin{equation}
\label{PairingDeltaV}
  \langle \delta^* V^*, \Gamma\rangle = \langle V, \delta \Gamma\rangle = \left \{ \begin{array}{ll} |\Aut (V)|  & \mbox{for all } \Gamma\in \{S,T,U\} \\ 0 & \mbox{for all other } \Gamma, \end{array} \right.
\end{equation}

There are two cases:
\begin{enumerate}
\item  All three diagrams $S,T,U$ are in distinct isomorphism classes;
\item  $T$ is isomorphic to $U$.  
\end{enumerate}


Now note that in either case, $\Aut(T) \cong \Aut(U)$, since any automorphism must fix the free vertices attached to $i$ and $i+1$, and the diagrams agree outside of the pictured part.  Similarly, in either case, $\Aut(S) \cong \Aut(V)$ since an automorphism of $S$ must fix the edge from $i$ to $j$, and since $S$ and $V$ agree outside of that edge.  We next analyze $\Aut(S)$.  Under automorphisms of $S$, the vertices attached to $j$ must either have singleton orbits or be in the same 2-point orbit.  Note that the case of singleton orbits corresponds precisely to case (1) above, while a 2-point orbit corresponds to case (2).  In case (1), $\Aut(S) \cong \Aut(T) \,\,( \cong \Aut(U))$, for the same reason that $\Aut(T)\cong \Aut(U)$.  Hence $|\Aut(V)|=|\Aut(\Gamma)|$ for all $\Gamma \in \{S,T,U\}$.  Thus equation (\ref{PairingDeltaV}) implies that in this case $\delta^* V^* = S^* + T^* + U^*$.

In case (2), the index $[\Aut(S): \Aut(S)_k]=2$, where $k$ is one of the vertices attached to $j$ and where $\Aut(S)_k$ denotes the subgroup of $\Aut(S)$ fixing $k$.  But $\Aut(S)_k \cong \Aut(T)$ .  So in this case $|\Aut(V)|=|\Aut(S)|=2|\Aut(T)| = 2|\Aut(U)|$.  So in this case, we conclude $\delta^* V^* = S^* + 2T^* = S^* + 2U^* = S^* + T^* + U^*$.

\emph{The IHX case}:  Let $F$ be the 4-valent diagram in the top of Figure \ref{Fig:IHXRelation}.  Let $e$ be the edge pictured in any of the three diagrams $I,H,X$ (by abuse of notation).  In this IHX case, it is possible that $\delta$ of any of the diagrams $\Gamma=I, H, X$ has more than one term isomorphic to $F$.  In fact, the number of such terms in $\delta \Gamma$ is given by the size of the orbit of $e$ (considered as an unordered pair) under the automorphism group of $\Gamma$.  This number is $[\Aut(\Gamma):\Aut(\Gamma)_e]$, where $\Aut(\Gamma)_e$ is the subgroup of automorphisms taking $e$ to itself (the subgroup of $\Aut(\Gamma)$ which fixes $\Delta$ setwise).  Thus we have 
\begin{equation}
\label{PairingDeltaF}
  \langle \delta^* F^*, \Gamma\rangle = \langle F, \delta \Gamma\rangle = \left \{ \begin{array}{ll} [\Aut(\Gamma):\Aut(\Gamma)_e] |\Aut (F)|  & \mbox{for all } \Gamma\in \{I,H,X\} \\ 0 & \mbox{for all other } \Gamma \end{array} \right.
\end{equation}

This equation tells us that if we write $\delta^* F^* = \sum_\Gamma c_\Gamma \Gamma^*$, then we have $$c_\Gamma |\Aut(\Gamma)|= [\Aut(\Gamma):\Aut(\Gamma)_e] |\Aut (F)|.$$ 


We now analyze the index above using another subgroup of $|\Aut(\Gamma)|$.  Let $\Delta$ be the subgraph of $\Gamma=I, H$, or $X$ (again abusing notation) consisting of $e$, the edge joining $i$ and $j$, and the edges incident to the endpoints of $e$.  
Let $\Aut(\Gamma)_\Delta$ denote the subgroup of $\Aut(\Gamma)$ which fixes every vertex of $\Delta$.    
Clearly $\Aut(\Gamma)_\Delta < \Aut(\Gamma)_e$ since $\Aut(\Gamma)_\Delta$ fixes $e$ (even as an ordered pair).  So we can consider the index $[\Aut(\Gamma)_e: \Aut(\Gamma)_\Delta]$.    This index is the order of the group $\Aut(\Gamma)_e|_\Delta$ of automorphisms in $\Aut(\Gamma)_e$ restricted to automorphisms of $\Delta$; in other words, it is the group of restrictions to $\Delta$ of automorphisms of $\Gamma$ that fix $\Delta$ setwise.  Abbreviate this group $G_\Gamma$.  Since $G_\Gamma$ is a subgroup of 
$$\Aut(\Delta) \cong \Sigma_2 \wr \Z/2 = \Z/2 \ltimes (\Z/2 \x \Z/2) \cong D_4$$ 
(the group of symmetries of a square), the index in question is either 1,2,4, or 8.  

We divide our argument into two cases:
\begin{enumerate}
\item all of $I,H,X$ are distinct isomorphism classes;
\item at least two of $I,H,X$ are isomorphic.
\end{enumerate}

\emph{Claim}: for $\Gamma \in \{I,H,X\}$, the indices $[\Aut(\Gamma)_e: \Aut(\Gamma)_\Delta]= |G_\Gamma|$
are all equal in case (1); in case (2), two of the diagrams are isomorphic, and this index is twice as large for the third diagram as for either of the two isomorphic ones.  

\emph{Proof of claim}:  Let $\sigma, \tau \in D_4$ denote the two elements that only swap two vertices (which as vertices of $I, H$, or $X$ must be adjacent to the same endpoint of $e$).  Let $\rho \in D_4$ be be a rotation\footnote{Note that this does \emph{not} correspond to a rotation of any of the pictures of $\Delta$ in Figure \ref{Fig:IHXRelation}.} by $\pi/2$.  Identify $D_4$ with $\Aut(\Delta)$ via an embedding of $\Delta$ as shown in $I$.

First notice that $H \cong X$ if and only if at least one of $\sigma, \tau, \rho$ (or $\rho^{-1}$) is in $G_I$.  One can also check that for any other element $\alpha \in D_4$ (meaning for $\alpha \in \{\rho^2, \rho\sigma, \sigma\rho\}$), we always have
\begin{equation}
\label{RemainingD4Elements}
\alpha \in G_I \iff \alpha \in G_H \iff  \alpha \in G_X.
\end{equation}  
Thus in case (1), none of $\sigma, \tau, \rho, \rho^{-1}$ is in $G_I$, and whatever remaining elements are in $G_I$ are also in $G_H$ and $G_X$.  Interchaging the roles of $I,H,X$, we get that $G_I \cong G_H \cong G_X$, which is what we wanted to show.

To finish case (2), suppose $H\cong X$.  Note first that none of $\sigma, \tau, \rho, \rho^{-1}$ can be an element of $G_H$ or $G_X$; thus in this case, $|G_I| \geq 2|G_H| \,\,(=2|G_X|)$.   However, if both $\sigma$ and $\tau$ are elements of $G_I$, then their product $\sigma \tau (= \tau \sigma = \rho^2)$ is in both $G_H$ and $G_X$.   This together with (\ref{RemainingD4Elements}) implies $|G_I| \leq 2 |G_H| (=2|G_X|)$.  So in this case  $|G_I| = 2|G_H| = 2 |G_X|$.  Interchanging the roles of $I,H,X$  finishes the proof of the Claim in case (2).

The right-hand side of equation (\ref{PairingDeltaF}) can be rewritten:
\begin{align*}
[\Aut(\Gamma): \Aut(\Gamma)_e] |\Aut(F)| 
&=|\Aut(\Gamma)| |\Aut(F)| / |\Aut(\Gamma)_e|  \\
&=|\Aut(\Gamma)|  |\Aut(F)| |\Aut(\Gamma)_\Delta| / |G_\Gamma|
\end{align*}

So the coefficient $c_\Gamma$ of $\Gamma^*$ in $\delta^* F^*$ is equal to $\left( |\Aut(F)| |\Aut(\Gamma)_\Delta| \right) / |G_\Gamma|$.  Note that the groups $\Aut(\Gamma)_\Delta$ are isomorphic for all $\Gamma\in \{I,H,X\}$ since the diagrams agree outside of the pictures.  Thus the quantity in parentheses is independent of $\Gamma$.

By the Claim, we see that in case (1), the coefficients $c_\Gamma$ are the same for all $\Gamma \in \{I,H,X\}$.  Since we are working over $\R$, we can $F$ divide by this number to get an equivalent generator with all $c_\Gamma$ equal to 1.  In case (2), we may suppose again without loss of generality that $H\cong X$.  In this case, we showed that $|G_I| = 2|G_H| = 2|G_X|$.  Thus $2 c_I = c_H = c_X$.  So an appropriate multiple of $\delta^* F^*$ is equal to $I^* + 2 H^* = I^* + 2 X^* = I^* + H^* + X^*$.
\end{proof}

Proposition \ref{P:AutFactors} implies that $\Ho_0(\LD^*_k)$ is the quotient of $\LD^0_k$ by all diagrams of the three types listed in its statement.  
Equivalently, $ \Ho^0(\LD^*_k) = \ker \delta$ is for each $k$ generated by trivalent diagrams 
such that the pairing with any of these three types of diagrams gives zero.
The three types of relations by which we quotient to get $\Ho_0(\LD^*_k)$ are called the \emph{STU relation}, the \emph{IHX relation}, and the \emph{1T relation}.  We will sometimes also use this terminology to describe the conditions that diagrams in $\Ho^0(\LD^*_k)$ must satisfy (see also Remark \ref{R:Dualizing}).



\begin{rem}\label{DropIHXand1T}
Bar-Natan \cite{BN:Vass} has shown that the IHX relation
follows from the STU relation.  
\end{rem}

We now consider the case of $\Ho^0(\HLD^*_k)$, where there are some additional observations to be made. First, the 1T relation is now vacuous since $\HLD$ contains no diagrams with chords connecting vertices on the same segment. Second, suppose that the two loose edges in the top diagram of Figure \ref{Fig:STURelation} belong to a loop of edges with all vertices except $i$ free. This is depicted in Figure \ref{Fig:HoLinkLoop}.

%
%
%
%

\begin{figure}[h]
\input{HoLinkLoop.pstex_t}
\caption{}
\label{Fig:HoLinkLoop}
\end{figure}

Then blowing up vertex $i$ can only result in one diagram, namely the (leftmost) diagram $S^*$ from the STU relation.  The other two would correspond to diagrams with paths between two segment vertices on the same segment that only go through free vertices, and such diagrams are not elements of $\HLD$.  We thus reduce the STU relation in $\Ho^0(\HLD^*_k)$ to the condition that the diagram in Figure \ref{Fig:SpecialHoLinkLoop} pairs to zero with any diagram in $\Ho^0(\HLD^*_k)$.

\begin{figure}[h]
\input{SpecialHoLinkLoop.pstex_t}
\caption{}
\label{Fig:SpecialHoLinkLoop}
\end{figure}

This relation extends to all diagrams with loops of free edges and not just those that are separated from a segment by a single mixed edge.  Namely, the STU relation can be applied repeatedly to any path between the loop of free edges and a segment (there are always such paths since every free vertex must have a path to a segment vertex) and the situation can be reduced to that of Figure \ref{Fig:SpecialHoLinkLoop}.  An example is given in Figure \ref{Fig:HoLinkLoopExample}, where ``$=0$" again means that this diagram pairs to 0 with any diagram.

\begin{figure}[h]
\input{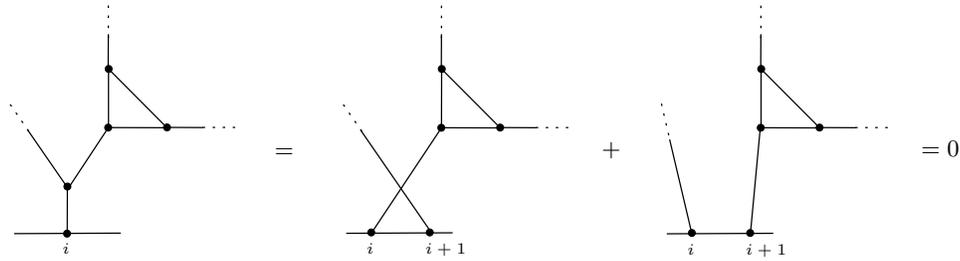}
\caption{An example of how a diagram with a loop of free edges pairs to 0 with any diagram.}
\label{Fig:HoLinkLoopExample}
\end{figure}

\begin{rem}\label{R:OnlyWayToGetLoopsRevisited}
At first glance, it might seem that the diagram from Figure \ref{Fig:HoLinkLoop} should not be permitted in $\HLD$ since repeated contractions of its edges would eventually produce a loop at vertex $i$, and loops have been excluded from $\HLD$.  However, such contractions would first produce a double edge between vertex $i$ and another free vertex, and a diagram with a double edge would already be zero by definition of $\LD$.  
\end{rem}

\begin{rem}\label{R:SwitchingChords}
There is another interesting consequence of the STU relation in $\HLD^0_k$ which we will have use for in future work when we study Milnor Invariants in more detail.  Namely, suppose that the same two loose edges in the top diagram of Figure \ref{Fig:STURelation} end on the same segment.  In other words, suppose the picture is as in Figure \ref{Fig:STUHoLinkBlowup}, where the dots indicate that there might be other segment vertices between those pictured.

\begin{figure}[h]
\input{STUHoLinkBlowup.pstex_t}
\caption{}
\label{Fig:STUHoLinkBlowup}
\end{figure}

Then the STU relation gotten from blowing up this diagram is just $T^* + U^*=0$, since the $S$ diagram contains a path between two segment vertices that goes through only a free vertex.  We thus get a special case of the STU relation in $\HLD^0_k$, given in Figure \ref{Fig:SpecialSTUHoLink}, i.e.~the two sides of the equation are equal on all diagrams in $\HLD^0_k$.

\begin{figure}[h]
\input{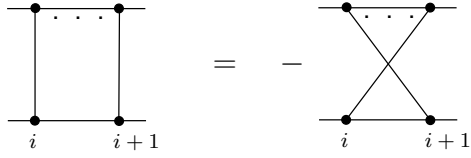}
\caption{A consequence of the STU relation in $\Ho^0(\HLD)$.}
\label{Fig:SpecialSTUHoLink}
\end{figure}

\end{rem}

We now collect the observations made so far.  Recall that we use the pairing on diagrams given by $\langle \Gamma_i, \Gamma_j\rangle = \delta_{ij} |\Aut(\Gamma_i)|$.

\begin{prop}\label{P:DegreeZero}  For each $k\geq 0$,

\begin{itemize}  
\item  $\Ho^0(\LD^*_k)$ consists of all linear combinations $\alpha = \sum \>a_i\Gamma_i$ of trivalent diagrams $\Gamma_i$ 
which satisfy the

\vskip 4pt
\begin{itemize}
\item STU relation, i.e., $(S^*+ T^* + U^*)(\alpha) \equiv \langle S + T + U, \alpha\rangle=0$
\item IHX relation, i.e., $(I^*+ H^* + X^*)(\alpha) \equiv \langle I + H + X, \alpha\rangle=0$, and
\item 1T relation, i.e., $( (1T)^*) (\alpha) \equiv \langle 1T, \alpha \rangle=0$.
\end{itemize}

\vskip 6pt
\item  $\Ho^0(\HLD^*_k)$ consists of all linear combinations $\alpha = \sum \>a_i \Gamma_i$ of trivalent diagrams $\Gamma_i$ 
which satisfy the

\vskip 4pt
\begin{itemize}
\item STU relation and
\item IHX relation, and
\item ``H1T relation", which is that $\Gamma^*(\alpha) = \langle \Gamma, \alpha\rangle= 0$ for any $\Gamma$ containing a closed path of edges.
\end{itemize}

\end{itemize}
\end{prop}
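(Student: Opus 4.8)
The proof dualizes the differential and reads off the relations from \refP{AutFactors} and the discussion preceding the Proposition.

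\emph{The $\LD$ case.} By definition $\Ho^0(\LD^*_k) = \ker(\delta\colon \LD^0_k\to\LD^1_k)$, and by the remarks following \eqref{E:Degree} the space $\LD^0_k$ is spanned by trivalent diagrams. The pairing $\langle\cdot,\cdot\rangle$ on each $\LD^d_k$ is nondegenerate, being diagonal in the diagram basis with positive entries $|\Aut(\Gamma)|$; under the resulting identifications $\LD^d_k\cong(\LD^d_k)^*$ the dual map $\delta^*\colon(\LD^1_k)^*\to(\LD^0_k)^*$ becomes a map $\LD^1_k\to\LD^0_k$, the transpose of $\delta$, and $\ker\delta$ is the orthogonal complement (with respect to the pairing) of its image. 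By \refP{AutFactors}, that image is spanned, as the pictured subdiagrams range over all possibilities, by the elements dual to $S+T+U$, $I+H+X$, and $1T$. Hence a combination $\alpha$ of trivalent diagrams lies in $\Ho^0(\LD^*_k)$ if and only if $\langle S+T+U,\alpha\rangle=\langle I+H+X,\alpha\rangle=\langle 1T,\alpha\rangle=0$ for every instance, which is precisely the first bullet.

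\emph{The $\HLD$ case, reduction.} Since $\HLD\subset\LD$ is a subcomplex and $\HLD^1_k\hookrightarrow\LD^1_k$ is injective, for $\alpha\in\HLD^0_k$ we have $\delta\alpha=0$ in $\HLD^1_k$ iff $\delta\alpha=0$ in $\LD^1_k$, so $\Ho^0(\HLD^*_k)=\HLD^0_k\cap\Ho^0(\LD^*_k)$. Thus $\alpha\in\Ho^0(\HLD^*_k)$ iff $\alpha$ is a combination of trivalent diagrams lying in $\HLD$ and $\alpha$ satisfies the STU, IHX and 1T relations of $\LD$. Since such an $\alpha$ pairs to zero with every diagram not in $\HLD$, each relation $\langle R,\alpha\rangle=0$ may be replaced by $\langle\pi R,\alpha\rangle=0$, where $\pi$ deletes from $R$ the diagrams not in $\HLD$. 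Recall that the unique four-valent vertex of a defect-one diagram is either a free vertex (IHX type), a segment vertex carrying two edges (STU type), or a segment vertex carrying one loop (1T type). The 1T relation is then vacuous, the $1T$ diagram having a chord between two vertices on one segment, hence not in $\HLD$. For IHX, the argument that proved $\HLD$ is a subcomplex shows that blowing up a free vertex introduces no new path between vertices on a common segment, so if the four-valent diagram $F$ lies in $\HLD$ then so do all of $I,H,X$, and if $F\notin\HLD$ then none of them does; thus the IHX relation is unchanged. For STU, a short case analysis on the four-valent diagram $V$ (using $V\in\HLD$) shows that $\pi(S+T+U)$ equals $S+T+U$ when the two edges at the segment vertex of $V$ end on vertices that neither lie on a common segment nor are joined in $V$ by a path of free edges; equals $T+U$, the special relation of Figure~\ref{Fig:SpecialSTUHoLink}, when those two edges end on a common segment; and equals $S$ alone, the relation of Figure~\ref{Fig:SpecialHoLinkLoop}, in which $S$ contains a closed path of free edges with a single pendant segment vertex, when those two edges are joined in $V$ by a path of free edges. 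Triples whose $V$ is not in $\HLD$ contribute only vacuous relations, the obstruction to $V\in\HLD$ forcing each of $S,T,U$ out of $\HLD$ as well.

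\emph{The $\HLD$ case, identifying the relations.} It remains to see that, modulo STU and IHX, the surviving relations are equivalent to the H1T relation. The special relations of Figure~\ref{Fig:SpecialSTUHoLink} follow from the ordinary STU relations, as explained before the Proposition, so they impose nothing new. The relations $\langle S,\alpha\rangle=0$ with $S$ carrying a closed path of free edges are special cases of H1T: in a trivalent diagram any closed path of edges lies entirely among free vertices, since a cycle cannot pass through a segment vertex, which has only one incident edge. Conversely, given any trivalent $\HLD$ diagram $\Gamma$ containing a closed path of edges, one applies STU repeatedly along a path from that cycle out to a segment — such a path exists because every free vertex has a path to a segment vertex — to rewrite $\langle\Gamma,\alpha\rangle$ as a sum of terms $\langle S,\alpha\rangle$ with $S$ of the Figure~\ref{Fig:SpecialHoLinkLoop} type, as illustrated in Figure~\ref{Fig:HoLinkLoopExample}, each of which vanishes. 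Hence, on $\alpha$ satisfying STU and IHX, imposing the surviving STU relations is equivalent to imposing H1T, which yields the second bullet.

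The rest of the argument is the formal dualization of \refP{AutFactors}; the main obstacle is the bookkeeping of the third paragraph together with the peeling argument of the fourth: determining precisely which blowups of a defect-one homotopy link diagram remain in $\HLD$, and checking that the reduction of a free-edge cycle to the basic configuration can be carried out using only STU relations among homotopy link diagrams, so that it is available to every $\alpha$ satisfying STU.
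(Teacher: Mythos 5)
Your overall strategy---dualize $\delta$ via the pairing, invoke Proposition \ref{P:AutFactors} for the $\LD$ case, and for $\HLD$ determine which blowups of a defect-one diagram survive---is the same as the paper's (the paper presents the proposition as a collection of the observations preceding it rather than as a separate proof). Your reduction $\Ho^0(\HLD^*_k)=\HLD^0_k\cap\Ho^0(\LD^*_k)$ is a clean way to organize the second bullet, and your derivation of H1T from STU by peeling the free-edge cycle toward a segment matches Figures \ref{Fig:SpecialHoLinkLoop} and \ref{Fig:HoLinkLoopExample}.

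The one concretely wrong step is your trichotomy for $\pi(S+T+U)$. The diagram $S$ can fall out of $\HLD$ even when the two loose edges of $V$ do not end on a common segment: take $i$ on segment $1$ joined to free vertices $a$ and $b$, with $a$ joined to segment vertices $w_a$ (on segment $2$) and $w_a'$ (on segment $3$), and $b$ joined to $w_b$ (on segment $2$) and $w_b'$ (on segment $3$), where $w_a\neq w_b$. Then $V\in\HLD$ (every path from $w_a$ to $w_b$ passes through $i$), and $T,U\in\HLD$ (there is no free path from $a$ to $b$), but $S\notin\HLD$ because the path $w_a,\,a,\,j,\,b,\,w_b$ meets only free vertices between its endpoints. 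So $\pi(S+T+U)=T+U$ here, and this instance is of neither your second nor your third type; your final paragraph, which accounts only for the Figure \ref{Fig:SpecialSTUHoLink} and Figure \ref{Fig:SpecialHoLinkLoop} degenerations, does not cover it. The gap is harmless under the paper's convention (see Definition \ref{D:WeightSystems} and Remark \ref{R:SwitchingChords}): the ``STU relation'' in $(\HLD^0_k)^*$ is $S^*+T^*+U^*=0$ with any non-$\HLD$ diagram already equal to zero as a functional on $\HLD^0_k$, so every $\pi(S+T+U)$ is by definition an STU relation and no case analysis is needed; the only content beyond dualization is that 1T is vacuous and that H1T is implied by STU. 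But as written, your third and fourth paragraphs purport to enumerate the degenerate instances and reduce them to the full STU relations plus H1T, and that enumeration is incomplete. (A smaller instance of the same slip: to see that $F\in\HLD$ forces $I,H,X\in\HLD$ you need that a bad path \emph{descends} under contraction of the new free edge, which is the easy converse of what the subcomplex argument proves, not that argument itself.)
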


\begin{rems}\label{R:Dualizing} \

\begin{enumerate}

\item  
Regarding descriptions of $\Ho^0(\LD^*_k)$ and $\Ho^0(\HLD^*_k)$ in previous literature, one difference is that Mellor \cite{Mellor-HoInv} and Mellor-Thurston \cite{MelThurs-HoInv} work with the variant of $\HLD^0_k$ consisting of unitrivalent diagrams without segments and without the STU relation (but they keep the other relations).  In fact, the only reason we listed the last relation for $\Ho^0(\HLD^*_k)$ (we could have left it out since it follows from the STU relation) is so that our description would exactly match those in \cite{Mellor-HoInv, MelThurs-HoInv}.

\item Define a \emph{tree} to be a connected diagram such that there is a unique path of minimal length between any pair of distinct vertices, and define a \emph{leaf} to be a mixed edge or chord of a tree (so a leaf has at least one associated segment vertex). Define a \emph{forest} to be a diagram whose connected components are all trees. Since elements of $\HLD^0_k$ are (sums of) trivalent diagrams without loops, every element is a sum of forests, each of whose trees has at most $m$ leaves, where $m$ is the number of distinct segments, and such that the segment vertices associated with the leaves all lie in distinct segments (that is, there is at most one segment vertex on each segment for a given tree in the forest). This was alluded to in the description of Figure \ref{Fig:HoDiagramExamples}, where the bottom diagram is such a tree.

\end{enumerate}
\end{rems}



\begin{defin}\label{D:WeightSystems}
Define the space of \emph{degree $k$ link weight systems $\LW_k$} as the vector space 
$$((\LD^0_k)^*/(STU, IHX, 1T))^*,$$ 
where $(-)^*$ denotes the dual vector space, and where $STU$ is the relation that $S^*+T^*+U^*=0$, etc.  Similarly, define the space of \emph{degree $k$ homotopy link weight systems $\HLW_k$} as the vector space 
$$((\HLD^0_k)^*/(STU, IHX, H1T))^*.$$
\end{defin}

Since a vector space is canonically isomorphic to its double dual, we have the following.

\begin{prop}
There are canonical isomorphisms 
$$\LW_k \cong \Ho^0(\LD^*_k) \ \ \ \ \text{and} \ \ \ \  \HLW_k\cong \Ho^0(\HLD^*_k).$$
\end{prop}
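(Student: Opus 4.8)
The plan is to identify both $\LW_k$ and $\HLW_k$ as double duals of cokernels, and then invoke a standard piece of linear algebra valid in finite dimensions.

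First I would unwind the definitions. By \refD{WeightSystems}, $\LW_k = ((\LD^0_k)^*/(STU,IHX,1T))^*$, and by \refP{AutFactors} the subspace $(STU,IHX,1T)\subseteq (\LD^0_k)^*$ is exactly the image of $\delta^*\colon (\LD^1_k)^*\to (\LD^0_k)^*$. Hence $(\LD^0_k)^*/(STU,IHX,1T) = \coker\delta^* = \Ho_0(\LD^*_k)$, so $\LW_k = (\Ho_0(\LD^*_k))^*$. In the homotopy case, \refP{DegreeZero} together with the discussion preceding it identifies $\operatorname{im}(\delta^*\colon (\HLD^1_k)^*\to (\HLD^0_k)^*)$ with $(STU,IHX,H1T)$, so likewise $\HLW_k = (\Ho_0(\HLD^*_k))^* = (\coker\delta^*)^*$.

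Next I would record the general fact: for a linear map $f\colon A\to B$ of finite-dimensional real vector spaces, there is a canonical isomorphism $(\coker f^*)^*\cong \ker f$. Indeed $(\coker f^*)^* = (A^*/\operatorname{im} f^*)^*$ is canonically the annihilator of $\operatorname{im} f^*$ inside $A^{**}$, and under the canonical identification $A^{**}\cong A$ this annihilator becomes $\{a\in A : (f^*\psi)(a)=0\text{ for all }\psi\in B^*\} = \{a\in A : \psi(f(a))=0\text{ for all }\psi\in B^*\} = \ker f$. Now $\LD^0_k,\LD^1_k$ and $\HLD^0_k,\HLD^1_k$ are finite-dimensional by the remarks following \refD{order}, so applying this fact to $f=\delta\colon\LD^0_k\to\LD^1_k$ gives $\LW_k = (\coker\delta^*)^*\cong \ker\delta = \Ho^0(\LD^*_k)$ canonically, and applying it to $\delta\colon\HLD^0_k\to\HLD^1_k$ gives $\HLW_k\cong\Ho^0(\HLD^*_k)$.

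Finally I would note that this matches the concrete descriptions in \refP{DegreeZero}: under the pairing $\langle\Gamma_i,\Gamma_j\rangle = \delta_{ij}|\Aut(\Gamma_i)|$ used throughout, the canonical isomorphism sends a functional to the unique linear combination of diagrams representing it, and the annihilator condition translates into precisely the STU/IHX/1T (resp.\ STU/IHX/H1T) relations listed there. The only ingredients beyond formal dualization are \refP{AutFactors} (and its homotopy analogue via \refP{DegreeZero}) and the finite-dimensionality of $\LD^d_k$ and $\HLD^d_k$; the latter is exactly what legitimizes the word ``canonical,'' since in infinite dimensions neither $V\cong V^{**}$ nor the identification of the dual of a quotient with an annihilator holds. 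Thus there is no substantive obstacle here — the task is simply to keep the dualization bookkeeping straight.
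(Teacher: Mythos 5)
Your argument is correct and is essentially the paper's own proof, just with the dualization spelled out: both identify $\LW_k$ (resp.\ $\HLW_k$) as the dual of $\coker\delta^* = \Ho_0$ via \refP{AutFactors}, and then use finite-dimensionality and double-duality to recover $\ker\delta = \Ho^0$. The paper compresses this into the single remark that a vector space is canonically isomorphic to its double dual; your explicit $(\coker f^*)^*\cong\ker f$ bookkeeping is the content behind that remark.
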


Since $(\LD^0_k)^*$ and $(\HLD^0_k)^*$ are spaces of diagrams, we can think of a weight system $W$ as a functional on diagrams such that $W(S^*+T^*+U^*)=0$, etc.  This is how weight systems are typically defined, and this is how we will think of them in Section \ref{S:FTlinks}, where we will denote elements (diagrams) of $(\LD^0_k)^*$ and $(\HLD^0_k)^*$ by letters without the superscripts $*$.

\begin{rems} \
\begin{enumerate}
\item
The real reason we introduced the grading by order is that weight systems of order $k$ are precisely finite type $k$ invariants; see Theorems \ref{T:UniversalFTLinks} and \ref{T:UniversalFTHoLinks}.
\item 
The above identification of weight systems with cocycles of diagrams can be used to reconcile integration from the graph complex with the integration of weight systems commonly found in the literature on finite type invariants.  That is, the map $\Ho^0(\LD^*_k) \to \Ho^0(\Lk_m^3)$ can be thought of as a map $\LW_k\to \Ho^0(\Lk_m^3)$.   
We will discuss this in \refS{FTlinks}.
\end{enumerate}
\end{rems}


We make one last observation, which we will use in \refS{FTlinks}.  We defined $\LW_k$ as the dual to a quotient of $(\LD^0_k)^*$ by certain relations.
Instead of considering trivalent diagrams modulo these relations, one can reduce to the case of diagrams containing only chords, i.e.~\emph{chord diagrams}.  
That is, note that in $(\LD^0_k)^*/(STU, IHX, 1T)$, any trivalent diagram $\Gamma$ can be rewritten as a sum of chord diagrams using the STU relation repeatedly.  The resulting complex inherits a different relation as follows:  Because the trivalent diagram in the STU relation can have both of its ``loose" edges also ending in segments (necessarily different segments in the case of $\HLD$), applying the STU relation twice gives what is know as the \emph{4T} relation, depicted in Figure \ref{Fig:4TRelation}.

  \begin{figure}[h]
\input{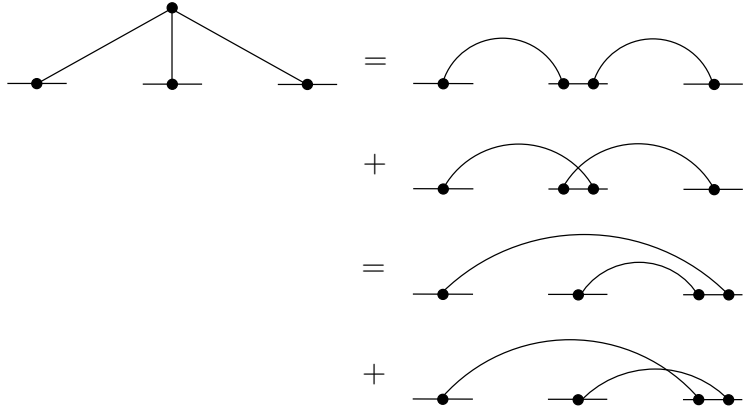}
\caption{Applying the STU relation to the middle and the right mixed edge produces the equality of the two pairs of chord diagrams.  Any time four chord diagrams differ in two places as pictured, one obtains such an equality, called the 4T relation.   The three arcs belong to distinct segments in the case of $\HLD$ and some or all of them could belong to same segment in the case of $\LD$.  An arbitrary permutation of the order of the three arcs in all the pictures is allowed.}
\label{Fig:4TRelation}
\end{figure}

Denote by
 $$
\mathcal{LC}^0_k \ \ \ \text{and}\ \ \ \mathcal{HC}^0_k
$$
the $\R$-vector spaces generated by chord diagrams on $m$ segments with $k$ chords ending on $2k$ distinct vertices (since defect zero implies trivalence, two chords cannot end in a common segment vertex).  For the latter space, there can be no chords with both endpoints on the same segment.  We will call these the \emph{link chord diagrams} and \emph{homotopy link chord diagrams}.  As in the case of  trivalent diagrams, the duals $(\mathcal{LC}^0_k)^*$ and $(\mathcal{HC}^0_k)^*$ can be identified as spaces of chord diagrams.
Using the relationship between the STU relation and the 4T relation, we have the following straightforward generalization of \cite[Theorem 6]{BN:Vass}.  

\begin{thm}\label{T:Trivalent=Chord}
There are isomorphisms
\begin{align*}
 (\LD^0_k)^*/(STU, IHX, 1T)\cong (\mathcal{LC}^0_k)^*/(4T,1T) \ \ \ \  \text{ and } \ \ \ \ 
(\HLD^0_k)^*/(STU, IHX, H1T) \cong (\mathcal{HC}^0_k)^*/ 4T.
\end{align*}
Each isomorphism sends a diagram with no free vertices to itself and a diagram with free vertices to the sum of chord diagrams obtained from it via the STU relation.
\end{thm}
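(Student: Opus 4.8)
The plan is to adapt Bar-Natan's argument (\cite[Theorem 6]{BN:Vass}) to the present setting with segments and the extra homotopy-link condition. First I would set up the two maps. In one direction, define $\psi \colon (\mathcal{LC}^0_k)^* \to (\LD^0_k)^*/(STU,IHX,1T)$ simply as the inclusion of chord diagrams (those trivalent diagrams with no free vertices) into all trivalent diagrams, followed by the quotient projection; since a chord diagram is already trivalent, this makes sense, and one checks that the $4T$ relation in the source maps into the span of $STU$ relations in the target (this is precisely the content of Figure \ref{Fig:4TRelation}), and that the $1T$ relation maps to the $1T$ relation, so $\psi$ descends to the claimed quotient. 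In the other direction, define $\varphi$ on a trivalent diagram $\Gamma$ by repeatedly applying the $STU$ relation at free vertices that are adjacent to a segment vertex: each such application strictly decreases the number of free vertices, so after finitely many steps one reaches a linear combination of chord diagrams. The key point here is \emph{well-definedness} of $\varphi$: the result must be independent of the order in which free vertices are resolved, and must respect the $STU$, $IHX$, and $1T$ ($=\text{H}1T$) relations in the source. For the homotopy-link case, one also checks that the resolution never produces a chord with both endpoints on the same segment, because $\Gamma \in \HLD^0_k$ forbids paths between same-segment vertices through free vertices only (Definition \ref{D:HoDiagrams}), and $STU$ at a free vertex adjacent to a segment vertex $i$ splits that vertex along edges that, by the defect-zero trivalence, land on two \emph{distinct} other segments (cf.\ the discussion around Figure \ref{Fig:HoLinkLoop} and Remark \ref{R:SwitchingChords}).

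For well-definedness of $\varphi$, the cleanest route is a local-confluence argument: show that if a trivalent diagram has two ``resolvable'' free vertices, resolving them in either order yields the same element modulo $4T$ (and $1T$). When the two free vertices are not adjacent this is immediate since the two $STU$ moves act on disjoint portions of the diagram; when they share an edge one gets exactly the $IHX$-type or $4T$-type coincidence, which is the one genuinely computational lemma. Combined with the observation that resolution terminates (strictly decreasing free-vertex count), Newman's lemma gives a unique normal form, hence a well-defined map $\varphi \colon \LD^0_k \to (\mathcal{LC}^0_k)^*/(4T,1T)$; one then checks $\varphi(S+T+U)=0$, $\varphi(I+H+X)=0$, and $\varphi(1T)=1T$ so that $\varphi$ descends. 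Alternatively, and perhaps more in the spirit of \cite{BN:Vass}, one can bypass confluence by induction on the number of free vertices, using $IHX$ to move a ``bad'' configuration, but the confluence packaging is conceptually cleaner.

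Finally I would check $\psi\circ\varphi = \mathrm{id}$ and $\varphi\circ\psi = \mathrm{id}$. The composite $\varphi\circ\psi$ is the identity on chord diagrams essentially by inspection, since $\psi$ of a chord diagram has no free vertices and $\varphi$ leaves it unchanged. The composite $\psi\circ\varphi$ is the identity because, in $(\LD^0_k)^*/(STU,IHX,1T)$, the $STU$ relation already identifies each trivalent diagram with the chord-diagram sum used to define $\varphi$, so $\psi(\varphi(\Gamma)) = \Gamma$ in the quotient. Both statements (the $\LD$ case and the $\HLD$ case) follow from the same argument, the only difference being the bookkeeping that same-segment chords never arise in the homotopy case, so the relevant relation is just $4T$ rather than $(4T,1T)$. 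I expect the main obstacle to be precisely the well-definedness/confluence step: verifying that the two ways of resolving a shared edge between free vertices agree modulo $4T$, and in the homotopy case simultaneously tracking that no forbidden same-segment chord is created during any resolution sequence.
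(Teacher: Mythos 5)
Your proposal is correct and follows essentially the same route as the paper, which offers no independent proof but simply invokes Bar-Natan's Theorem 6; your plan is a faithful elaboration of that argument (inclusion of chord diagrams one way, iterated STU resolution the other way, with the confluence/independence-of-resolution-order lemma as the one real computation, and the observation that the $\HLD$ condition on length-two paths through a free vertex prevents same-segment chords from appearing). The only imprecision is the phrase that the two remaining edges at a resolved free vertex ``land on two distinct other segments''---they may land on free vertices---but this does not affect the argument, since what matters is only that neither can be a segment vertex on the same segment as $i$.
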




Now denote by 
$$
\mathcal{LCW}_k\ \ \ \text{and}\ \ \ \mathcal{HCW}_k
$$
the vector spaces of functionals on $(\mathcal{LC}^0_k)^*/(4T, 1T)$ and $(\mathcal{HC}^0_k)^*/4T$, respectively. Dualizing \refT{Trivalent=Chord}, we thus have isomorphisms
\begin{equation}\label{E:WeightIsos}
\LW_k\cong\mathcal{LCW}_k\ \ \ \text{and}\ \ \ \HLW_k\cong\mathcal{HCW}_k.
\end{equation}

\refT{Trivalent=Chord} will be used in the proof of \refT{UniversalFTHoLinks}.



\section{Configuration space integrals and cohomology of homotopy string links}\label{S:ChainMap}



\subsection{Compactification of configuration spaces}\label{S:Compactification}


In this section we review the standard construction of a compactification of configuration spaces over which we will integrate to produce invariants. This is necessary since integrals over the ordinary open configuration space may not converge.  The original compactification is due to Fulton and MacPherson \cite{FM} and Axelrod and Singer \cite{AS}.

\begin{defin}
For a manifold $M$, let
$$
C(p,M)=\{ (x_1, x_2, ..., x_p)\in M^p \colon x_i\neq x_j\ \text{for}\ i\neq j\}
$$
be the configuration space of $p$ points in $M$. When $M=\R$, the configuration space has $p!$ components, and in this case $C(p,\R)$ will mean the component consisting of those $(x_1,\ldots, x_p)$ such that $x_1<\cdots<x_p$.  Similarly, when $M=S^1$, $C(p, S^1)$ will mean one component where the points $x_1,\ldots, x_p$ are in a fixed cyclic order.
\end{defin}

For a submanifold $Y$ of a manifold $X$, the blowup $\mathrm{Bl}(X,Y)$ is the result of removing $Y$ and replacing it by the sphere bundle of its normal bundle.  Equivalently, this is the result of removing an open tubular neighborhood of $Y$.


\begin{defin}
\label{D:Compactification}
For a compact manifold $M$, the (Fulton--MacPherson) compactification $C[p,M]$ is defined as the closure of the image of 
\[
C(p,M) \xymatrix{\ar@{^(->}[r] & } M^p \x \prod_{S\subset \{1,...,p\} \\ |S|\geq 2} \mathrm{Bl}(M^S, \Delta_S)
\]
where $\Delta_S=\{(x,x,\ldots, x)\in M^S\}$ is the thin diagonal in $M^S$.  
For $M=\R^n$, $C[p,\R^n]$ is considered as the subspace of $C[p+1, S^n]$ where the last point is fixed at $\infty$.  
\end{defin}

First, here are some general properties of $C[p,\R^n]$ that are relevant for our purposes.  Proofs can be found in \cite{S:Compact}:
\begin{enumerate}
\item The space $C[p, \R^n]$ is a manifold with corners homotopy equivalent to $C(p, \R^n)$;
\item The boundary of $C[p, \R^n]$ is given by points colliding or escaping to infinity;
\item The directions and relative rates of collision are recorded, so that a $k$-stage collision (points coming together or going to infinity in $k$ different stages rather than all of them doing this at the same instance)  gives a point in a codimension $k$ stratum of $C[p, \R^n]$.  These $k$ stages are the \emph{screens} explained below.
\end{enumerate}

The last property in particular says that codimension one faces of $C[p, \R^n]$ consist of configurations where some subset of the points has come together or escaped to infinity at the same time. These faces are of particular interest since they play a role in checking whether some differential form obtained on the space of links is closed (i.e.~they are relevant for an application of Stokes' Theorem).


Some elaboration is necessary in order to define configuration space integrals for string links.  A stratum of $C[p,M]$ is labeled by a collection $\{S_1,...,S_k\}$ of distinct subsets $S_i\subset \{1,...,p\}$ with $|S_i|\geq 2$ and satisfying the condition 
\[
S_i \cap S_j\neq \emptyset \Rightarrow \mbox{ either } S_i \subset S_j \mbox{ or } S_j \subset S_i  
\]
In other words, the $S_i$ are pairwise nested or disjoint.  For each set $S_i$ in the collection, we can think of the points in $S_i$ as having collided.  If there is an $S_j \subset S_i$ in the collection, we can think of the points in $S_j$ as having first collided with each other and then with the remaining points in $S_i$.  Two strata indexed by $\{S_1,...,S_k\}$ and $\{S'_1,...,S'_j\}$ intersect precisely when the set $\{S_1,...,S_k, S'_1,...,S'_j\}$ satisfies the above condition.  In that case, that is the set which indexes the intersection.

Roughly speaking, each $S_i$ corresponds to an ``infinitesimal configuration" or \emph{screen}, and all the screens together encode directions and relative rates of collision, as follows.  
Let $s_i=|S_i|$.  
In the case where all $S_i$ are disjoint, the screen corresponding to $S_i$ is a point $\vec{u}_{S_i}\in (C(s_i, T_x M))/(\R^n \rtimes \R_+)$, where $\R^n \rtimes \R_+$ is the group of translations and (oriented) scalings of $\R^n \cong T_x M$.  
In the case where all the $S_i$ are nested, say as $S_1 \subset ... \subset S_k$, the screen $\vec{u}_{S_i}$ is a point in $(C(s_i - s_{i-1}+1, T_x M))/(\R^n \rtimes \R_+)$ (where we set $s_0=0$).
In general, $\vec{u}_{S_i}$ is a configuration of points in $T_x M$, modulo the action of $(\R^n \rtimes \R_+)$.  
Each of the $p$ points in a limiting configuration (i.e., a configuratoin in the boundary of $C[p, \R^n]$) corresponds to a point in possibly multiple screens; if the point is indexed by $j\in \{1,...,p\}$, it corresponds precisely to one point in each $S_i$ that contains $j$.
The number of points in $\vec{u}_{S_i}$ is obtained by taking the points in $S_i$ and, for each maximal proper $S_j$ contained in $S_i$, replacing the points in $S_j$ by a single point; i.e., all the points in $\vec{U}_{S_i}$ become one point in $\vec{U}_{S_j}$ when $S_i \subset S_j$.
From this description, one can verify that the stratum labeled $\{S_1,...,S_k\}$ has codimension $k$.  Again, more precise details can be found in  \cite{FM} and \cite{AS}.

\begin{rem}
An alternative but equivalent definition of this compactification was given by Sinha in \cite{S:Compact}, which is as follows.  Suppose $M$ is a compact submanifold of $\R^N$.  Then for all $1\leq i<j<k\leq p$ we have maps 
\begin{equation}\label{E:CompactMaps}
v_{ij}=\frac{x_j-x_i}{|x_j-x_i|}\in S^{N-1},\ \ \ \ \ a_{ijk}=\frac{|x_i-x_j|}{|x_i-x_k|}\in [0,\infty],
\end{equation}
whose domain is $C(p,M)$ and where $[0,\infty]$ denotes the one-point compactification of $[0,\infty)$. These maps measure the direction and relative rates of  collision of configuration points respectively.  Adding this information to the configuration space is achieved by considering the map \begin{align}
\gamma\colon   C(p, M) & \longrightarrow M^p\times (S^{N-1})^{p \choose 2}  \times  [0,\infty]^{p  \choose 3}\label{E:CompactificationMap}  \\
             (x_1, ..., x_p) & \longmapsto (x_1, ..., x_p, v_{12}, ..., v_{ij}, ..., v_{(p-1)p}, a_{123}, ..., a_{ijk}, ..., a_{(p-2)(p-1)p}).\notag
\end{align}

The closure of the image of $\gamma$ turns out to be diffeomorphic (as a manifold with corners) to $C[p, M]$. That is,
$$C[p,M]=\overline{\gamma(C(p, M))}\subset M^p\times (S^{N-1})^{{p \choose 2}}\times  [0,\infty]^{p  \choose 3}.$$  

Since $C[p,\R^n]$ is defined via $C[p+1, S^n]$, we would have to take $N$ above to be $n+1$ if we were to use this definition.  Then the unit vector difference maps $v_{ij}$ would land in $S^n$, rather than the more geometrically obvious candidate, $S^{n-1}$.  (If one tries to use $S^{n-1}$, the maps $v_{ij}$ cannot extend from $\R^n$ to $S^n$; this is why \refD{Compactification} is better suited for our purposes.)
\end{rem}

\subsection{Bundles of compactified configuration spaces}\label{S:Bundles}


%

Given $\Gamma\in\LD$, we will construct in this section a certain bundle of configuration spaces over $\Lk_m^n$.  There is already a standard recipe for doing this which was initiated in the case of closed knots ($m=1$) in \cite{BT} and fully developed in \cite{CCRL}.  Generalizing this recipe to long knots or closed links is straightforward.  In generalizing to homotopy string links, two issues arise.
First, some care needs to be taken to extend this construction to ordinary string (i.e.~long) links.  
The second and perhaps more serious issue is that even after extending to string links, this construction fails to even produce a bundle over $\HLk_m^n$ by restriction to the subcomplex $\HLD$ as we will see in Section \ref{S:BundlesVerticesEdges}.



Resolving the first issue essentially just relies on our definition of string links, in which different components approach infinity in different directions (see Definition \ref{D:MapSpace}), as well as properties of the Fulton-MacPherson compactification.  We take the standard bundle construction, as in \cite{CCRL, V:B-TLinks}, as our starting point, and we describe how to make the construction work for string links in Section \ref{S:BundlesVertices}.

To fix the second issue, we devise a more refined way of constructing bundles which works over both $\Lk_m^n$ and $\HLk_m^n$.  
In Section \ref{S:BundlesVerticesEdges} we refine this construction to produce bundles over spaces of homotopy string links. The difference between the two approaches can be summarized very succinctly:  in the standard approach, only vertices of a diagram are taken into account in the construction of bundles, whereas in the new approach, we will take into account both vertices and edges. We will show the compatibility of the approaches in Section \ref{S:Integrals}.



\subsubsection{Bundles of compactified configuration spaces from vertices of a diagram}\label{S:BundlesVertices}


A diagram $\Gamma\in\LD$ will define a configuration space where the segment vertices of $\Gamma$ correspond to points moving along a link in $\R^n$ and free vertices correspond to points that are free to move anywhere in $\R^n$.

Suppose $\Gamma\in\LD$ has $i_j$ segment vertices on the $j$th segment, $1\leq j\leq m$, and $s$ free vertices.
For any link $L \in \Lk^n_m$, the evaluation map
\begin{equation}
ev_\Gamma(L) \colon \prod_{j=1}^m C(i_j, \R)\longrightarrow C\left[\sum_{j=1}^m i_j, \R^n\right]
\end{equation}
is given by evaluating the $j$th strand of $L$ on $i_j$ configuration points.  In other words, it is given by
$$
\left(L,(x^1_1,\ldots, x^1_{i_1}),\ldots, (x^m_1,\ldots, x^m_{i_m})\right)\mapsto \left(L(x^1_1),\ldots, L(x^1_{i_1}),\ldots, L(x^m_1),\ldots, L(x^m_{i_m}),\right).
$$

Let $C\left[ \vec{i}; \coprod_{j=1}^m \R \right]$ denote the closure of the image of $ev_\Gamma(L)$,
where we think of $\vec{i}$ as $(i_1,\ldots,i_m)$.  Suppressing the dependence on $L$ will be justified by the next lemma.  So far it is clear that for any $L\in \Lk^n_m$, $C\left[ \vec{i}; \coprod_{j=1}^m \R \right]$ is compact and that its interior is diffeomorphic to $\prod_{j=1}^m C(i_j, \R)$.

\begin{lemma}
For any $L\in \mathcal{L}^n_m$, the space $C\left[ \vec{i}; \coprod_{j=1}^m \R \right]$ has the structure of a manifold with corners, independent of $L$.
\end{lemma}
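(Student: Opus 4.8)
The plan is to establish two things: that the evaluation map $ev_\Gamma(L)$ restricts to a smooth embedding on the open configuration space, so that $C[\vec i;\coprod\R]$ is literally the closure of a submanifold; and that the boundary strata added in taking this closure fit together into a manifold-with-corners structure whose diffeomorphism type does not depend on $L$. For the first point, note that since $L$ is an embedding and the images of distinct strands are disjoint, the restriction of $ev_\Gamma(L)$ to $\prod_{j=1}^m C(i_j,\R)$ is injective; it is an immersion because $L$ is, and it is proper because $|L(t)|\to\infty$ as $t\to\pm\infty$ along each strand. Hence $ev_\Gamma(L)$ embeds $\prod_j C(i_j,\R)$ as a closed submanifold of $C\big(\sum_j i_j,\R^n\big)$, and $C[\vec i;\coprod\R]$ is by definition its closure inside $C\big[\sum_j i_j,\R^n\big]$.

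For the manifold-with-corners structure I would use that $C[p,\R^n]$ is itself a manifold with corners whose open strata are indexed by nested families $\{S_1,\dots,S_r\}$ of subsets of $\{1,\dots,p\}$, together with the ``screen'' data recording directions and relative rates of collision, following \cite{S:Compact, FM, AS}. One then checks that $C[\vec i;\coprod\R]$ meets each such stratum in a submanifold and that these pieces assemble with corners. The analysis splits into two essentially different kinds of strata. For a stratum where a subset of points collides at a \emph{finite} point of $\R^n$, all the colliding points necessarily lie on a single strand, since distinct strands have disjoint images and hence distinct-strand points stay at positive distance; a tubular-neighborhood (straightening) chart around the relevant embedded arc of $L(\coprod_m\R)$ then identifies the local picture with that of points colliding on a straight line $\R\subset\R^n$, a standard model independent of $L$. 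For a stratum where some points escape to infinity, one uses the crucial feature of \refD{MapSpace}: outside a fixed compact set, $L$ is \emph{literally} the prescribed map whose $2m$ ends run off to infinity along $2m$ pairwise distinct rays. Consequently the escaping part of the closure — including the screen at infinity recording the relative escape rates along these fixed directions — is exactly the same subset of $C\big[\sum_j i_j,\R^n\big]$ for every $L$, so its manifold-with-corners structure can be read off once and for all. Combining these two cases with the evident mixed strata gives the structure on all of $C[\vec i;\coprod\R]$.

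To get independence of $L$, I would produce, for any $L_0,L_1\in\Lk^n_m$, a ``transport'' bijection $\widetilde g\colon C[\vec i;\coprod\R]_{L_0}\to C[\vec i;\coprod\R]_{L_1}$ which on interiors carries the configuration with parameters $(s^1_1,\dots,s^m_{i_m})$ on $L_0$ to the configuration with the \emph{same} parameters on $L_1$ — equivalently, $\widetilde g$ is induced by the diffeomorphism $L_1\circ L_0^{-1}$ between the embedded $1$-manifolds. Since $L_0=L_1$ outside a compact subset of $\coprod_m\R$, the map $\widetilde g$ is the identity on each stratum along which points escape to infinity, so it extends smoothly there. Near a finite-collision stratum, $\widetilde g$ is the reparametrization of the collision screens obtained by passing from the jet of $L_0$ to the jet of $L_1$ at the collision point; because the screen data depends smoothly (and invertibly) on the intrinsic data of the colliding parameters, this reparametrization is smooth, and $\widetilde g$ extends smoothly there as well. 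Hence $\widetilde g$ is a diffeomorphism of manifolds with corners restricting to the tautological identification of interiors, and the diffeomorphism type of $C[\vec i;\coprod\R]$ does not depend on $L$, justifying the notation.

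The step I expect to be the main obstacle is the analysis at infinity in the second paragraph: one must verify carefully that the fixed asymptotics of \refD{MapSpace} — strands diverging along $2m$ distinct directions rather than along parallel lines — genuinely make the escaping-to-infinity part of the closure a manifold with corners (and that the corresponding corner structure matches up with the interior collisions). This is precisely the place where the present conventions diverge from those of \cite{V:B-TLinks}: with parallel strands the directions at infinity coincide, the relevant screen at infinity degenerates, and the claimed structure fails — which is exactly the error for which this paper supplies an erratum.
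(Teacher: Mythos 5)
Your proposal is correct and follows essentially the same route as the paper: both identify the boundary strata via nested collections of screens, observe that finite collisions are confined to a single strand (so the screen becomes a configuration on the tangent line $T_xL$ modulo translation and scaling), and use that escapes to infinity are governed by the $2m$ fixed, pairwise distinct rays of \refD{MapSpace} — exactly the point where the distinct-directions convention is needed and where \cite{V:B-TLinks} goes wrong. The only difference is in packaging: the paper makes the corner structure explicit by writing exponential-map parametrizations of neighborhoods of codimension-$k$ points in the style of Axelrod--Singer, while you appeal to local straightening and a transport diffeomorphism $\widetilde g$ for the $L$-independence; both suffice.
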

\begin{proof}
We will show that the manifold with corners structure comes from that on $C[i_1+\ldots+i_m+1, S^n]$.  First note that all the added limit points are in the boundary of $C[i_1+\ldots+i_m, \R^n]$.  
Around such a point, a neighborhood in $C[i_1+\ldots+i_m, \R^n]$ has various strata, points of which are described by collections of screens, as was exaplained after Definition \ref{D:Compactification}.  To describe the corresponding neighborhood in $C\left[ \vec{i}; \coprod_{j=1}^m \R \right]$, we replace these spaces of screens by similar spaces which have lower dimension, but will have the same codimension in the latter space.  

At a collision of $s\geq 2$ points at a point $x$ away from $\infty$, the space $C(s, T_x\R^n)/(\R^n \rtimes \R_+)$ is replaced by $C(s, T_x L)/(\R \rtimes \R_+)$, where by abuse of notation $L$ also denotes the image of $L$ and where $\R \rtimes \R_+ < \R^n \rtimes \R_+$ is the subgroup of translations and scalings of $T_x \R^n$ which take $T_x L$ to itself.

Consider a collision of $\geq 1$ points with $\infty$, first as just a configuration in $C[I, \R^n] \subset C[I+1, S^n]$, where $I:=i_1+...+i_m$.  Consider a stratum incident to that configuration, labeled by $\{S_1,..,S_k\}$.   Let $S_i$ be a set containing the $(I+1)^\mathrm{th}$ point $\infty$, and let $s_i=|S_i|$.  We can describe the ``screen-space" corresponding to $S_i$ as $C(s_i, T_\infty S^n)/(\R^n \rtimes \R_+) \cong  C(s_i-1,T_\infty S^n \setminus \{0\})/\R_+$, where this identification comes from fixing the $(I+1)^\mathrm{th}$ point $\infty$ at the origin in $T_\infty S^n$. 

Now suppose that the points in the configuration are on the link, so that this configuration is also in $C\left[ \vec{i}; \coprod_{j=1}^m \R \right]$.  We can describe a neighborhood in that space by replacing the screen above by a configuration of points in $T_\infty S^n \setminus \{0\}$ which are constrained to lie in certain open rays emanating from the origin, modulo scaling.  These rays correspond to the directions of the fixed linear embedding.  In other words, we replace $C(s_i-1, T_\infty S^n \setminus \{0\})/\R_+$ by $C(s_i-1,T_\infty L \setminus \{0\})/\R_+$.  (Of course, the one-point compactification of $L$ is not a manifold at $\infty \in S^n$, but $T_\infty L$ seems like appropriate notation for the subset of lines through the origin in $T_\infty S^n$ corresponding to the components of $L$.)  This treatment of collisions at infinity is where we use that our string link components to have different directions towards infinity, as required in Definition \ref{D:MapSpace}.

Note that not all the strata in $C[I, \R^n]$ occur as strata in $C\left[ \vec{i}; \coprod_{j=1}^m \R \right]$ because points in different components of the link cannot collide away from $\infty$ (and furthermore, if a point on a link component has its two neighbors approaching $\infty$, then it must approach $\infty$ too).  But for a given $\mathcal{S} =\{S_1,...,S_k\}$ which does index a stratum $\mathfrak{S}$ of $C\left[ \vec{i}; \coprod_{j=1}^m \R \right]$, any subset of $\mathcal{S}$ clearly indexes a stratum in $C\left[ \vec{i}; \coprod_{j=1}^m \R \right]$.  These subsets correspond precisely to the higher-dimensional strata which intersect a neighborhood of any point in $\mathfrak{S}$.  This is the sense in which the corner structure on $C\left[ \vec{i}; \coprod_{j=1}^m \R \right]$ is inherited from the one on $C[I, \R^n]$.  Hence the space $C\left[ \vec{i}; \coprod_{j=1}^m \R \right]$ can be seen to be a manifold with corners for the same reason that $C[I,\R^n]$ is.  

In more detail
we parametrize a neighborhood of a codimension $k$ point in $C\left[ \vec{i}; \coprod_{j=1}^m \R \right]$, just as is done for $C[i,\R]$ in \cite[Section 4.1]{V:SBT} or for $C[i, M]$ in \cite[Section 5.4]{AS}:  


Let $\mathfrak{S}$ be a stratum of codimension $k$, indexed by a collection $\mathcal{S}$ of subsets $S_1,\ldots,S_k$ of $\{1,2,\ldots,1+\sum_j i_j\}$ (with the last point here corresponding to $\infty$).  
A point $c$ in such a stratum is described by (not necessarily distinct) points $x_1 = x_1(c), \ldots, x_p=x_p(c)\in S^n = \R^n \cup \{\infty\}$, together with $k$ screens $\vec{u}_S$ (one for each $S\in \mathcal{S}$) at some of these $x$'s, with possibly multiple screens at any given $x$.  A screen $\vec{u}_S$ away from $\infty$ consists of $u_{S,1} <  u_{S,2} <... \in \R \cong T_x L$ such that $\sum_h u_{S,h} =0$ and $\sum_h |u_{S,h}|^2=1$.  A screen $\vec{u}_S$ at $\infty$ decsribes the escape to $\infty$ of $a_j+b_j$ points on the $j$-th strand, $a_j$ of them in the ``negative direction" and $b_j$ of them in the ``positive direction".  Such a screen is given by
\begin{equation*}
\label{E:ScreenAtInfty}
( u^1_{S,1}<\ldots<u^1_{S, a_1},\ldots,u^m_{S,1}<\ldots<u^m_{S,a_m}; v^1_{S,1}<\ldots<v^1_{S,b_j}, \ldots, v^m_{S,1}< \ldots <v^m_{S,b_m})
\end{equation*}
where $u^j_{S,h} \in (-\infty, 0)$ and $v^j_{S,h} \in (0,\infty)$ are points in the two rays in $T_\infty L \setminus \{0\}$ coming from the $j$-th component of the link, and where these parameters satisfy

$$\sum_j \left( \sum_{h=1}^{a_j} |u^j_{S,h}|^2 + \sum_{h=1}^{b_j} |v^j_{S,h}|^2\right) = 1$$

Note that either type of screen $\vec{u}_S$ is given by as many parameters as there are elements in $S$.  Using the set of the $x$'s in $\R^n=S^n \setminus \{\infty\}$ (without multiplicity) and the parameters in the $\vec{U}_S$, we can parametrize an open neighborhood $V\subset \mathfrak{S}$ of an interior point $c_0 \in \mathrm{int} (\mathfrak{S})$, showing that  $\mathfrak{S}$ is a manifold (of dimension $np - k$).

Thus to understand the corner structure of $C\left[ \vec{i}; \coprod_{j=1}^m \R \right]$, it suffices to provide a map from an open neighborhood $U\x [0,\epsilon)^k$ of $(c_0, 0)$ in $\mathrm{int}(\mathfrak{S}) \x [0, \infty)^k$ to $C\left[ \vec{i}; \coprod_{j=1}^m \R \right] $.  We first define a map $U\x (0,\epsilon)^k \to (S^n)^{i_1+...+i_m}$ by 
$$
(c, r_1,...,r_k) \mapsto \left(  \mathrm{exp}_{x_1(c)} \left( \sum_{\ell\in \{1,...,k\} : \ S_\ell \ni 1} \widetilde{r}_\ell  \ u_{S_\ell, 1} \right), ..., \mathrm{exp}_{x_p(c)} \left( \sum_{\ell\in \{1,...,k\} : \ S_\ell \ni p} \widetilde{r}_\ell \  u_{S_\ell, p} \right)      \right)
$$
where $\mathrm{exp}_x$ is the exponential map $T_x L \to S^n$ and where $$\widetilde{r}_\ell = \prod_{\ell' :  \ S_{\ell'} \supset S_\ell} r_{\ell'}.$$
Even though $T_\infty L$ is strictly not a tangent space to a manifold, it has an exponential map coming from the restriction of the exponential map from $T_\infty S^n$.
For a sufficiently small neighborhood $U=U(c_0)$ and sufficiently small $\epsilon=\epsilon(c_0)$, one can show that this map is injective.  The map above is essentially \cite[Equation 5.71]{AS}, 
and the proof of injectivity is essentially the same as the proof given in that reference.  Finally, this map extends continuously  to a map $U\x [0, \epsilon)^k \to C\left[ \vec{i}; \coprod_{j=1}^m \R \right] $ by mapping a point $(c,0)$ into $\mathfrak{S}$ and, more generally, by mapping a boundary point $(c, \vec{r})$ into the stratum indexed by $\{S_\ell : r_\ell =0\}$.
\end{proof}

It is now clear that for any $L$ this gives a compactification of $\prod_{j=1}^m C(i_j, \R)$ whose manifold-with-corners structure is independent of $L$.  So we can write 
\begin{equation}\label{E:LinkEvaluation}
ev_\Gamma \colon \Lk^n_m \x
C\left[ \vec{i}; \coprod_{j=1}^m \R \right]
\longrightarrow C\left[\sum_{j=1}^m i_j, \R^n\right]
\end{equation}


Returning to the ordinary compactified configuration spaces, we have the projection
\begin{equation}\label{E:Projection}
pr\colon C\left[\sum_{j=1}^m i_j + s, \R^n\right]\longrightarrow C\left[\sum_{j=1}^m i_j, \R^n\right]
\end{equation}
given by forgetting the last $s$ points of a configuration, as well as all the $v_{ij}$ and $a_{ijk}$ which involve any of the last $s$ points.

\begin{defin}\label{D:LinksBundle}
Given $\Gamma\in\LD$ with $i_j$ segment vertices on the $j$th segment and $s$ free vertices, let $\vec{\imath}=(i_1,\ldots, i_m)$, and let
$$
C\left[\vec{\imath}+s;\, \Lk_{m}^n, \Gamma\right]
$$
 be the pullback of $pr$ along $ev_\Gamma$:
\begin{equation}\label{E:LinksPullback}
\xymatrix{
C\left[\vec{\imath}+s;\, \Lk_{m}^n, \Gamma\right]\ar[r]  \ar[d]  &
C\left[\sum_{j=1}^m i_j + s, \R^n\right]\ar[d]^{pr}  \\
\Lk_m^n\times C\left[ \vec{i}; \coprod_{j=1}^m \R \right].
\ar[r]^-{ev_\Gamma} &
C\left[\sum_{j=1}^m i_j, \R^n\right]
}
\end{equation}
\end{defin}

We then have the following special case of Proposition A.3 in \cite{BT}. 

\begin{prop}\label{P:LinkBundle}
With $\Gamma$ as above, the projection
$$
\overline{\pi}_{\Lk,\Gamma}\colon C\left[\vec{\imath}+s;\, \Lk_{m}^n, \Gamma\right]\longrightarrow \Lk_{m}^{n}
$$
 is a smooth fiber bundle whose fiber is a finite-dimensional smooth manifold with corners.
\end{prop}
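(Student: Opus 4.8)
The map $\overline{\pi}_{\Lk,\Gamma}$ is the composite
\[
C\left[\vec{\imath}+s;\, \Lk_{m}^n, \Gamma\right]\ \xrightarrow{\ \rho\ }\ \Lk_m^n\times C\left[\vec{\imath};\, \coprod_{j=1}^m\R\right]\ \xrightarrow{\ \mathrm{pr}_1\ }\ \Lk_m^n ,
\]
where $\rho$ is the left-hand vertical arrow of the pullback square \refE{LinksPullback} and $\mathrm{pr}_1$ is projection to the first factor; the plan is to show each factor is a smooth fiber bundle of manifolds with corners and that the composite is again one. The first input I would invoke is the standard fact — essentially the content of \cite[Proposition A.3]{BT}, of which this Proposition is stated to be a special case — that the map $pr$ of \refE{Projection} forgetting the last $s$ points of a configuration is a smooth fiber bundle of manifolds with corners: over a configuration $\mathbf{c}$ of $q:=\sum_j i_j$ points its fiber is the Fulton--MacPherson compactification of the space of $s$ points in $\R^n$ relative to the fixed points $\mathbf{c}$, and since any two such $q$-point configurations differ by an ambient diffeomorphism of $\R^n$, all these fibers are diffeomorphic; write $F$ for the typical fiber. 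Since $pr$ is a submersion, its pullback along any smooth map is automatically a smooth fiber bundle, so $\rho$ — being by definition the pullback of $pr$ along the evaluation map $ev_\Gamma$ of \refE{LinkEvaluation} — is a smooth fiber bundle with fiber $F$. To make this precise I would use the preceding Lemma to regard $\Lk_m^n\times C[\vec{\imath};\coprod_j\R]$ as a smooth manifold with corners (the first factor being the smooth infinite-dimensional manifold of \refS{SmoothStructure} and the second the finite-dimensional manifold with corners, independent of the link, supplied by the Lemma), and I would check that $ev_\Gamma$ is smooth — equivalently, smooth on every plot in the sense of \refS{SmoothStructure} — which is clear since evaluating links on configuration points is a smooth operation.

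It remains to pass from $\rho$ to $\overline{\pi}_{\Lk,\Gamma}=\mathrm{pr}_1\circ\rho$. The map $\mathrm{pr}_1$ is trivially a fiber bundle, with fiber the compact finite-dimensional manifold with corners $C[\vec{\imath};\coprod_j\R]$. To see local triviality of the composite, fix $L_0\in\Lk_m^n$ and choose a contractible open neighborhood $U\ni L_0$ (possible since $\Lk_m^n$ is a manifold, so its charts may be shrunk to convex, hence contractible, opens). Then the inclusion $\{L_0\}\times C[\vec{\imath};\coprod_j\R]\hookrightarrow U\times C[\vec{\imath};\coprod_j\R]$ is a homotopy equivalence, and $U\times C[\vec{\imath};\coprod_j\R]$ is paracompact (a product of a paracompact space with a compact one), so by homotopy invariance of fiber bundles the restriction of $\rho$ over $U\times C[\vec{\imath};\coprod_j\R]$ is isomorphic to the pullback of its restriction over $\{L_0\}\times C[\vec{\imath};\coprod_j\R]$ along the projection $U\times C[\vec{\imath};\coprod_j\R]\to C[\vec{\imath};\coprod_j\R]$. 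Projecting that last description down to $U$ identifies $\overline{\pi}_{\Lk,\Gamma}^{-1}(U)$ with $U\times\Phi_{L_0}$, where $\Phi_{L_0}$ is the total space of the $F$-bundle obtained by restricting $\rho$ to the slice over $L_0$. This is exactly local triviality, and $\Phi_{L_0}$ is a finite-dimensional smooth manifold with corners, being the total space of a fiber bundle with compact manifold-with-corners base $C[\vec{\imath};\coprod_j\R]$ and manifold-with-corners fiber $F$. Hence $\overline{\pi}_{\Lk,\Gamma}$ is a smooth fiber bundle of the asserted type.

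The genuinely hard analytic work — the local triviality of the point-forgetting map $pr$ over the manifold-with-corners $C[q,\R^n]$, and the manifold-with-corners structure of the constraint space $C[\vec{\imath};\coprod_j\R]$ together with its independence of the link — is already available, the former from \cite{BT} and the latter from the Lemma preceding this Proposition, so what remains is largely formal. The step I would expect to demand the most care is confirming that $ev_\Gamma$ is a genuine smooth map of manifolds with corners and not merely continuous, since this relies on the explicit local model for the corners of $C[\vec{\imath};\coprod_j\R]$ from the Lemma, on smoothness of link evaluation near collisions, and on the behavior of string links near infinity built into \refD{MapSpace}; the remaining bookkeeping — the composition-of-bundles argument over the infinite-dimensional base $\Lk_m^n$ — is then routine once one works in contractible charts there.
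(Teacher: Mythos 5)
Your route is genuinely different from the paper's, which gives essentially no argument for this Proposition: it is stated as ``a special case of Proposition A.3 in \cite{BT},'' followed only by the remark that, in the diffeological picture, one should check the bundle property plot by plot (after pulling back along each smooth $\psi\colon M\to\Lk_m^n$ with $M$ finite-dimensional), where the Bott--Taubes argument --- explicit smooth local trivializations built from a family of ambient diffeomorphisms of $\R^n$ carrying $L_0$ to each nearby link $L$ --- applies without change, and that smoothness of $\overline{\pi}_{\Lk,\Gamma}$ is then clear. Your decomposition of $\overline{\pi}_{\Lk,\Gamma}$ through $\Lk_m^n\times C[\vec{\imath};\coprod_{j=1}^m\R]$, reducing everything to (i) the point-forgetting map $pr$ being a bundle of manifolds with corners and (ii) a composite-of-bundles argument, is a legitimate alternative and has the virtue of isolating exactly which external inputs are used.

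There are, however, two places where the argument as written does not deliver what the Proposition claims. The inference ``since $pr$ is a submersion, its pullback along any smooth map is automatically a smooth fiber bundle'' is not valid (submersions do not pull back to bundles in general); what you need, and do assert just beforehand, is that $pr$ is itself a fiber bundle, from which pullbacks are bundles for formal reasons --- so this is fixable but should be restated. More seriously, the covering-homotopy argument over the paracompact space $U\times C[\vec{\imath};\coprod_{j=1}^m\R]$ produces only a \emph{topological} trivialization of $\rho$, hence only topological local triviality of the composite, whereas the Proposition asserts a \emph{smooth} fiber bundle (and smoothness is essential downstream, since the whole point is fiberwise integration of forms). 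Smooth homotopy invariance is not automatic here, with $U$ an open subset of an infinite-dimensional manifold and fibers that are manifolds with corners. To close this you must either run the explicit Bott--Taubes construction (choose, smoothly in $L\in U$, a compactly supported diffeomorphism of $\R^n$ taking the image of $L_0$ to that of $L$, inducing a smooth isomorphism of fibers and hence a smooth trivialization over $U$), or adopt the paper's plot-wise reduction so that the base is finite-dimensional and a smooth covering-homotopy theorem is available. A smaller point: identifying the typical fiber $F$ by moving $q$-point configurations around by ambient diffeomorphisms only addresses interior configurations, while $ev_\Gamma$ does hit boundary strata of $C[\sum_j i_j,\R^n]$ (points on a single strand can collide); here you are again leaning on $pr$ being a bundle over the entire compactification, which is true but should be invoked explicitly rather than derived from the interior picture.
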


Returning to the perspective of diffeology (as explained in Section \ref{S:SmoothStructure}), it is convenient to think of this as a compatible collection of bundles, one for each $\psi\colon  M\to\Lk_m^n$, just as the one above but with $\Lk_m^n$ replaced by $M$.  Here $\psi$ is a smooth map and $M$ is a finite-dimensional manifold (without corners).  In that case, it is not hard to generalize the proof of Proposition A.3 of \cite{BT} from one fiber to the whole bundle.  It is also not hard to see that the projection map $\overline{\pi}$ is smooth.  (Note that the corner structure plays no role in the smoothness of $\overline{\pi}$, since $d \overline{\pi}$ sends all the tangent vectors orthogonal to boundary faces to zero.)

We will denote the fiber of $\overline{\pi}_{\Lk,\Gamma}$ over a link $L$ by
$$
\overline{\pi}_{\Lk,\Gamma}^{-1}(L)=C\left[\vec{i}+s;\, L, \Gamma\right].
$$
We think of this space as a configuration space whose first $i_1$ points must lie on the first strand of $L$, second $i_2$ must lie on the second strand, and so on, while the last $s$ are free to move anywhere in $\R^n$ (including on the image of $L$).


\subsubsection{Bundles from diagram vertices and a difficulty with homotopy links}\label{S:ProblemBundlesVertices}


If $\Gamma$ is a diagram in $\HLD$, then the above construction will not in general produce a fiber bundle over $\HLk^n_m$. The first problem is that a generic element $H\in \HLk^n_m$ need not be an embedding or even an immersion, so that the target of the evaluation map is not the usual compactified configuration space, but rather a ``partial'' configuration space where some points are allowed to collide (without regard for how), while others are not. The second problem, not as easily overcome, is that the map from one partial configuration space to another which restricts to some subset of the original set of points is usually not a fibration, making it difficult to produce a fiber bundle by pullback. As an illustration, consider the following example.

\begin{example}\label{Ex:ProblemExample}
Define
$$
C(2,1;\R^n)=\{(x_1,x_2,y)\in (\R^{n})^3\colon x_1,x_2\neq y\}.
$$
and let $C[2,1;\R^n]$ denote its compactification (we only compactify along the diagonals which have been removed). Next, take $m=1$ (so there is one strand) and any value of $n$, and consider the evaluation map $$ev\colon\HLk^n_1\times C[2,\R]\longrightarrow \R^n\times\R^n.
$$ The projection 
$$pr\colon C[2,1;\R^n]\longrightarrow\R^n\times\R^n$$ 
to the first two coordinates is not even a fibration, as the fiber over a point $(x_1,x_2)$ with $x_1=x_2$ is homotopy equivalent to $S^{n-1}$, while the fiber over such a pair with $x_1\neq x_2$ is homotopy equivalent to $S^{n-1}\vee S^{n-1}$. The problem persists with links of more components.
\end{example}

However, if we only allow one point on each strand for the evaluation map, then we can proceed as follows. We have an evaluation map (where $\vec{1}:=(1,1,...,1)$)
$$
ev\colon \HLk_m^n\times C\left[\vec{1}; \coprod_{j=1}^m \R\right] \longrightarrow C[m, \R^n]
$$
obtained by evaluating each strand of a homotopy link on exactly one point in that strand.  The image necessarily lies in the interior of the compactified configuration space $C[m, \R^n]$ since the images of the $m$ strands are disjoint.

We again have a projection map
\begin{equation}\label{E:SingleProjection}
pr\colon C[m+s, \R^n]\longrightarrow C[m, \R^n]
\end{equation}
which is a fibration (of manifolds with corners) so that one can form the pullback
$$
\xymatrix{
C[\vec 1 + s;\, \HLk_{m}^n]\ar[r]  \ar[d]  &
C[m + s, \R^n]\ar[d]^{pr}  \\
\HLk_m^n\times  C\left[\vec{1}; \coprod_{j=1}^m \R\right]  \ar[r]^-{ev} &
C[m, \R^n]
}
$$
There is now a bundle
\begin{equation}\label{E:OnePointBundle}
 C[\vec 1 + s;\, \HLk_{m}^n]\longrightarrow \HLk_{m}^n
 \end{equation}
 for the same reason we have one in \refP{LinkBundle}.  (It should be noted that A.3 of \cite{BT} may appear to the reader not to apply, but it depends on A.5, which does apply in this situation and gives the result we claim.)  We now use this observation to build bundles over $\HLk_m^n$ for any diagram $\Gamma\in\HLD$, and this will naturally extend to diagrams in $\LD$. In order to do so, we need to break our diagrams up into pieces, called ``grafts''.



\subsubsection{The graft components of a diagram}\label{S:GraftComponents}



\begin{defin}
For a vertex $v$ in a diagram $\Gamma$, let $N(v)$ be the set of all pairs $(w,e)$ such that $b(e)=\{v,w\}$.
\end{defin}

Thus $N(v)$ consists of all the ``neighbors'' of $v$ counted with multiplicity according to edges.

\begin{defin}
Let $\Gamma=(V,E,b)\in\LD$ be a diagram. Define the \emph{hybrid} of $\Gamma$ to be the diagram $\widetilde{\Gamma}=(\widetilde{V},\widetilde{E},\widetilde{b})$ defined as follows: The set $\widetilde{V}$ is obtained from $V$ by replacing each segment vertex $v\in V$ of $\Gamma$ with the set $v\times N(v)$, the elements of which will represent new vertices, and otherwise the vertex set is unchanged. The edge set $\widetilde{E}$ is equal to $E$. The map $\widetilde{b}$ is induced from $b$ according to the following rule: Suppose $b(e)=\{v,w\}$. If $v,w\in\widetilde{V}$, then $\widetilde{b}(e)=b(e)$. If one of $v$ or $w$, say $v$, is a segment vertex, then $\widetilde{b}(e)=\{(v,(w,e)),w\}$. If both are, then $\widetilde{b}(e)=\{(v,(w,e)),(w,(v,e))\}$.
\end{defin}

The hybrid is not a link diagram, but it does induce certain link diagrams which are subdiagrams of the original link diagram $\Gamma$.

\begin{defin}\label{D:GraftComp}
For a diagram $\Gamma\in \LD$ with hybrid $\widetilde{\Gamma}$, define the \emph{graft components} of $\widetilde{\Gamma}$ to be the set of path components (i.e., connected components) of $\widetilde{\Gamma}$.
\end{defin}


\begin{example}\label{Ex:LinkBundle}
Consider the diagram $\Gamma$ in Figure \ref{Fig:HoLinkBundle}.  The five graft components of its hybrid $\widetilde{\Gamma}$ are given in Figure \ref{Fig:HoLinkBundleComponents}.

\begin{figure}[h]
\input{HoLinkBundle.pstex_t}
\caption{}
\label{Fig:HoLinkBundle}
\end{figure}

\begin{figure}[h]
\input{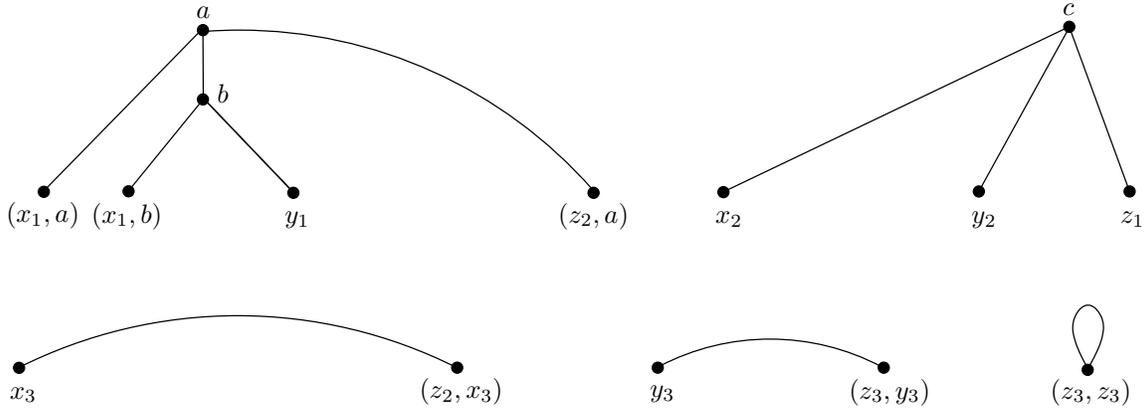}
\caption{The five graft components of the diagram in Figure \ref{Fig:HoLinkBundle}. We have simplified the labels on the vertices of the graft components because the original diagram does not possess multiple edges between a given pair of vertices.}
\label{Fig:HoLinkBundleComponents}
\end{figure}  
\end{example}

The following is clear by construction.

\begin{prop}\label{P:GraftChords}
Each chord of $\Gamma$ gives rise to a graft component consisting of two vertices and a single edge, and each loop at a segment vertex gives rise to a graft component with a single vertex and a single edge.
\end{prop}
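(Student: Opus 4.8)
The plan is to unwind the definitions of the hybrid $\widetilde{\Gamma}$ and of its graft (path) components, and to observe that the second coordinate attached to a ``split'' copy of a segment vertex records exactly which edge is incident to it. Recall that $\widetilde{E}=E$ and that a segment vertex $v$ of $\Gamma$ is replaced in $\widetilde{V}$ by the set $\{v\}\times N(v)$, where $N(v)$ consists of the pairs $(w,e)$ with $b(e)=\{v,w\}$; free vertices are unchanged. So first I would fix an edge $e$ of $\Gamma$ and determine, for each new vertex $(v,(w,e'))$ of $\widetilde{\Gamma}$ coming from a segment vertex $v$, exactly when $(v,(w,e'))\in\widetilde{b}(e)$.

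Then I would run through the cases in the definition of $\widetilde{b}$. If both endpoints of $e$ are free vertices, then $\widetilde{b}(e)=b(e)$ contains no new vertices. If $b(e)=\{v,x\}$ with $v$ a segment vertex and $x$ free, then $\widetilde{b}(e)=\{(v,(x,e)),x\}$, whose only new vertex is $(v,(x,e))$. If $b(e)=\{v,w\}$ with $v,w$ both segment vertices, then $\widetilde{b}(e)=\{(v,(w,e)),(w,(v,e))\}$; and if $b(e)=\{v\}$ is a loop, then $\widetilde{b}(e)=\{(v,(v,e))\}$. In every case, a new vertex lying in $\widetilde{b}(e)$ has second coordinate of the form $(\,\cdot\,,e)$; conversely, the second coordinate of the new vertex $(v,(w,e'))$ is $(w,e')$, so $(v,(w,e'))\in\widetilde{b}(e)$ forces $e'=e$. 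This shows that every new vertex $(v,(w,e))$ of $\widetilde{\Gamma}$ is incident in $\widetilde{\Gamma}$ to the single edge $e$ and to no other edge.

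With this in hand the proposition is immediate. If $e$ is a chord of $\Gamma$ joining distinct segment vertices $v$ and $w$, then in $\widetilde{\Gamma}$ the edge $e$ joins the two new vertices $(v,(w,e))$ and $(w,(v,e))$ (distinct, since $v\neq w$ implies the first coordinates differ), and by the previous paragraph neither of these is incident to any edge other than $e$; hence the path component of $\widetilde{\Gamma}$ containing $e$ is precisely $\{(v,(w,e)),(w,(v,e))\}$ together with the single edge $e$. If instead $e$ is a loop of $\Gamma$ at the segment vertex $v$, then $\widetilde{b}(e)=\{(v,(v,e))\}$, so $e$ is a loop of $\widetilde{\Gamma}$ at the single new vertex $(v,(v,e))$, which is incident to no other edge; hence the path component is $\{(v,(v,e))\}$ with the single edge $e$.

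The only place where anything could go wrong is the bookkeeping in the middle step — making sure that the label $(w,e)$ on a split copy of a segment vertex genuinely pins down the unique incident edge $e$. This is purely formal once the definitions of $N(v)$ and $\widetilde{b}$ are spelled out, so I do not expect a real obstacle; the proposition is essentially a consistency check that the hybrid construction severs each chord and each loop from the rest of the diagram, as it is designed to do.
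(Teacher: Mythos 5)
Your proof is correct; the key observation that a split vertex $(v,(w,e))$ has the edge $e$ recorded in its second coordinate and hence is incident to $e$ alone is exactly the reason the claim holds. The paper offers no argument at all (it states the proposition is ``clear by construction''), so your careful unwinding of the definitions of $N(v)$ and $\widetilde{b}$ is simply the justification the paper leaves implicit.
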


Although the hybrid $\widetilde{\Gamma}$ is not a link diagram, each graft component $c({\widetilde{\Gamma}})$ of $\widetilde{\Gamma}$ canonically defines an element of $\LD$, with its structure induced by $\Gamma$. 

\begin{defin}\label{D:graft}
Suppose the diagram $c({\widetilde{\Gamma}})=(V(c({\widetilde{\Gamma}})),E(c({\widetilde{\Gamma}})),b_{c({\widetilde{\Gamma}})})$ is a graft component of $\widetilde{\Gamma}$, so that $V(c({\widetilde{\Gamma}}))\subset \widetilde{V}$ and $E(c({\widetilde{\Gamma}}))\subset \widetilde{E}=E$. The forgetful map $\widetilde{V}\to V$ identifies $c({\widetilde{\Gamma}})$ with a subdiagram $c(\Gamma)$ of $\Gamma$, called a \emph{graft} of $\Gamma$ which inherits all the necessary structure for it to define an element of $\LD$. 
\end{defin}

If $\Gamma\in\HLD$, then it is clear that all the grafts of $\Gamma$ are also elements of $\HLD$. The set of all graft components, and hence the set of all grafts, can be ordered according to the ordering of the vertices of $\Gamma$; no two grafts will have the same underlying vertex sets because diagrams with multiple edges between a pair of vertices are set to zero.

If $\Gamma\in\HLD$, the grafts of $\Gamma$ have an additional useful property which will allow us to build bundles over $\HLk$.

\begin{prop}\label{P:GraftComponentVertices}
For $\Gamma\in\HLD$, each graft of $\Gamma$ has at most one segment vertex on each segment.
\end{prop}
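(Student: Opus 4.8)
The plan is to argue by contradiction. Suppose some graft $c(\Gamma)$ of a diagram $\Gamma\in\HLD$ contains two \emph{distinct} segment vertices $v_1\neq v_2$ lying on the same segment; I will produce a path in $\Gamma$ between two distinct same-segment vertices that avoids every vertex on any other segment, contradicting condition (2) of \refD{HoDiagrams}. The structural fact I would establish first is that in the hybrid $\widetilde{\Gamma}$ of a loopless diagram, every vertex lying over a segment vertex is univalent: a hybrid vertex over a segment vertex $v$ has the form $(v,(w,e))$ and, by the definition of $\widetilde{b}$, is incident only to the edge $e$ (any other edge $e'$ through $v$ produces the \emph{different} hybrid vertex $(v,(w',e'))$), with incidence multiplicity one since $\Gamma$ has no loops. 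Thus $\widetilde{\Gamma}$ is a graph whose leaves are exactly the copies of segment vertices, while all remaining vertices are free vertices, which retain valence $\geq 3$.

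Under the contradiction hypothesis, the graft component $c(\widetilde{\Gamma})$ contains a vertex $\widetilde{v}_1$ over $v_1$ and a vertex $\widetilde{v}_2$ over $v_2$; since $c(\widetilde{\Gamma})$ is connected, I would pick a path $\widetilde{P}$ in it from $\widetilde{v}_1$ to $\widetilde{v}_2$ of minimal length $\ell$. A minimal-length path visits each vertex at most once, so $\widetilde{P}$ traverses pairwise distinct vertices $\widetilde{v}_1=\widetilde{u}_0,\widetilde{u}_1,\ldots,\widetilde{u}_\ell=\widetilde{v}_2$, and for each interior index $0<i<\ell$ the vertex $\widetilde{u}_i$ meets the two distinct consecutive edges of $\widetilde{P}$, hence has valence $\geq 2$, hence by the first paragraph is not a copy of a segment vertex — so it is a free vertex.

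Finally I would push $\widetilde{P}$ down to $\Gamma$ via the forgetful map $p\colon \widetilde{V}\to V$ of \refD{graft}, using $\widetilde{E}=E$ and the identity $p(\widetilde{b}(e))=b(e)$ (a direct check of the three cases defining $\widetilde{b}$). The resulting edge sequence is then a path in $\Gamma$ from $v_1$ to $v_2$; its set of traversed vertices is $\{v_1,v_2\}$ together with the free vertices $p(\widetilde{u}_1),\ldots,p(\widetilde{u}_{\ell-1})$ — and nothing else, because looplessness of $\Gamma$ forbids consecutive $\widetilde{u}_{i-1},\widetilde{u}_i$ from collapsing to a single vertex of $\Gamma$. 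In particular this path passes through no segment vertex other than $v_1,v_2$, so it does not pass through any vertex on a different segment, contradicting condition (2) of \refD{HoDiagrams}. (When $\ell=1$ the image path is a single chord joining $v_1$ and $v_2$, already excluded from $\HLD$ for the same reason; and loops do not arise since $\HLD$ contains none.)

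The step I expect to require the most care is the passage from the minimal hybrid path to a path in $\Gamma$ with no interior segment vertices: one must check that the resolution performed by passing to the hybrid genuinely separates, at every segment vertex, all edges incident there — this is precisely what forces the interior vertices of a minimal hybrid path to be free — and that nothing is lost or newly created upon collapsing back to $\Gamma$ (which is where looplessness of $\Gamma\in\HLD$ is used a second time).
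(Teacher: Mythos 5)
Your proof is correct and rests on the same key observation as the paper's: in the hybrid, every vertex lying over a segment vertex is univalent, so connectivity within a graft component is forced to run through free vertices, and pushing such a connection down to $\Gamma$ yields a path between two same-segment vertices avoiding all other segments, contradicting condition (2) of the definition of $\HLD$. The paper packages this as a preliminary claim (any two free vertices in a graft component are joined by a path of free edges) and then iteratively surgers an arbitrary path in the graft to eliminate intermediate segment vertices, whereas you obtain the same conclusion in one step by taking a minimal path in the hybrid — an organizational difference only.
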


\begin{proof}
First we claim that for any pair of distinct free vertices $v,v'$ in the same graft component $c({\widetilde{\Gamma}})$, there exists a path of free edges between them. This is clear since each vertex of $\widetilde{\Gamma}$ which arises from a segment vertex of $\Gamma$ is joined to precisely one other vertex in that component, so any path between $v$ and $v'$ in $c({\widetilde{\Gamma}})$ can be shortened to avoid such vertices. This clearly descends to a path in $c_\Gamma$ between $v$ and $v'$ consisting only of free edges.

Now suppose, on the contrary, that there is some graft component $c({\widetilde{\Gamma}})$ of $\widetilde{\Gamma}$ such that the associated graft $c(\Gamma)$ of $\Gamma$ has two distinct segment vertices $x$ and $x'$ on a given segment.

Let $\alpha=\{e_i\}_{i=1}^k$ be any path of edges from $x$ to $x'$ in $c(\Gamma)$. Let $1\leq j\leq k$ be such that $b(e_j)$ contains a segment vertex $y$ on a segment different than the segment on which $x,x'$ lie. Such a $j$ must exist by definition of $\HLD$. If $b(e_j)=\{y,v\}$ and $b(e_{j+1})=\{y,v'\}$, then $v=v'$ implies $y$ could be avoided by removing $e_j,e_{j+1}$ from our path. Hence $v\neq v'$, and both are free vertices by \refP{GraftChords}. But our observation at the beginning of the proof shows there must exist a path between $v$ and $v'$ which avoids $y$. We can similarly eliminate any other segment vertex encountered along the way,  producing a path between $x$ and $x'$ which does not pass through any other segment vertices.
\end{proof}



\subsubsection{Bundles of compactified configuration spaces from vertices and edges of a diagram}\label{S:BundlesVerticesEdges}


We now describe the construction of bundles over $\Lk_m^n$ and $\HLk_m^n$ using the grafts of a diagram.

\begin{prop}\label{P:GraftComponentCodomain}
Let $\Gamma\in\LD$ be a diagram with $i_j$ segment vertices on the $j$th segment, and let $c({\Gamma})$ be a graft of $\Gamma$ with $d_j$ segment vertices on the $j$th segment for all $j=1$ to $m$.  Let $\vec{i}=(i_1,...,i_m)$.  Then $c(\Gamma)$ gives rise to an evaluation map
\begin{equation*}
ev_{c(\Gamma)}:\Lk_m^n\times  C\left[\vec{i}; \coprod_{j=1}^m \R\right]
\longrightarrow C\left[\sum_jd_j,\R^n\right].
\end{equation*}
If $c_1({\Gamma}),\ldots, c_k({\Gamma})$ are the grafts of $\Gamma$ ordered as described above, and $c_l(\Gamma)$ has $d_{l,j}$ segment vertices on the $j$th segment for $l=1$ to $k$, then we have an evaluation map
\begin{equation}\label{E:GraftEval}
ev_{gr(\Gamma)}:\Lk_m^n\times  C\left[\vec{i}; \coprod_{j=1}^m \R\right]
\longrightarrow \prod_{l=1}^k C\left[\sum_jd_{l,j},\R^n\right],
\end{equation}
where $ev_{gr(\Gamma)}=(ev_{c_1(\Gamma)},\ldots, ev_{c_k(\Gamma)})$. Moreover, if $\Gamma\in\HLD$, then we have an evaluation map
\begin{equation*}
ev_{gr(\Gamma)}:\HLk_m^n\times  C\left[\vec{i}; \coprod_{j=1}^m \R\right]
\longrightarrow \prod_{l=1}^k C\left[\sum_j d_{l,j},\R^n\right]
\end{equation*}
whose restriction to $\Lk_m^n\times C\left[\vec{i}; \coprod_{j=1}^m \R\right]$ is equal to the map in \refE{GraftEval}, and whose image in each factor lies in the open configuration space $C\left(\sum_j d_{l,j},\R^n\right)$.
\end{prop}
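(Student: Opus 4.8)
\emph{Proof plan.} The strategy is to assemble all three evaluation maps out of the single evaluation map already available, namely $ev_\Gamma$ of \refE{LinkEvaluation}, together with forgetful projections of compactified configuration spaces and the universal property of a product. By \refD{graft}, the segment vertices of a graft $c(\Gamma)$ form a subset of those of $\Gamma$, with $d_j$ of them on the $j$th segment. Forgetting the configuration points indexed by the segment vertices of $\Gamma$ that do not lie in $c(\Gamma)$ is a continuous map of Fulton--MacPherson compactifications $p_{c(\Gamma)}\colon C[\sum_j i_j,\R^n]\to C[\sum_j d_j,\R^n]$ --- this is the projection of \refE{Projection} up to a relabeling of the points, and its existence is part of the standard package on the compactification \cite{FM,S:Compact}. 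I would then define $ev_{c(\Gamma)} = p_{c(\Gamma)}\circ ev_\Gamma$, a map $\Lk_m^n\times C[\vec i;\coprod_{j=1}^m\R]\to C[\sum_j d_j,\R^n]$, which takes values in $C[\sum_j d_j,\R^n]$ by construction. Running over the ordered list $c_1(\Gamma),\dots,c_k(\Gamma)$ of grafts of $\Gamma$ (ordered as in \refS{GraftComponents}) and forming the product map then produces $ev_{gr(\Gamma)} = (ev_{c_1(\Gamma)},\dots,ev_{c_k(\Gamma)})$, which is \refE{GraftEval}.

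For $\Gamma\in\HLD$, the same recipe evaluates a map $\sqcup_m\R\to\R^n$ on points lying on its segments, so it continues to make sense verbatim with $\Lk_m^n$ replaced by $\HLk_m^n$; what must be checked is that no ``partial configuration space'' of the kind forced on us in Example~\ref{Ex:ProblemExample} is needed --- i.e.\ that the output is genuinely a configuration of \emph{distinct} points --- and that the resulting map is defined and continuous on all of $\HLk_m^n\times C[\vec i;\coprod_{j=1}^m\R]$. This is where I would invoke \refP{GraftComponentVertices}: since $\Gamma\in\HLD$, each graft $c_l(\Gamma)$ meets each segment in at most one vertex, so for any homotopy link $H$ the points comprising $ev_{c_l(\Gamma)}(H,-)$ lie on pairwise distinct strands of $H$, and distinct strands of a homotopy link have disjoint images by \refD{(Ho)Links}. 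Hence these points are pairwise distinct, so the image of $ev_{c_l(\Gamma)}$ avoids every collision stratum and lands in the open configuration space $C(\sum_j d_{l,j},\R^n)$; in particular there is no obstruction to defining the map on all of $\HLk_m^n\times C[\vec i;\coprod_{j=1}^m\R]$. Continuity is inherited from continuity of evaluation together with the ($L$-independent) manifold-with-corners structure on $C[\vec i;\coprod_j\R]$ supplied by the lemma of \refS{BundlesVertices}, and since $\Lk_m^n\subset\HLk_m^n$, restricting this map to $\Lk_m^n\times C[\vec i;\coprod_{j=1}^m\R]$ returns, by construction, the map in \refE{GraftEval}.

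Essentially all of the content is in the second paragraph, and even there the decisive geometric input --- that each graft of a homotopy link diagram touches any given segment at most once --- is \refP{GraftComponentVertices}, which is already proved. I expect the step requiring the most care in a full write-up to be the infrastructural one: verifying that the forgetful projections $p_{c(\Gamma)}$ are well defined and continuous, and that $p_{c(\Gamma)}\circ ev_\Gamma$ extends continuously to the corner strata of $C[\vec i;\coprod_j\R]$. This is not new mathematics --- it follows from the standard behavior of the Fulton--MacPherson compactification under forgetting points \cite{FM,S:Compact} and from the proof of the lemma in \refS{BundlesVertices} --- but it does require lining up strata carefully rather than waving one's hands.
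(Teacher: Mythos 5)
Your proposal is correct and follows essentially the same route as the paper: the paper's proof is precisely the two-line observation you isolate as decisive, namely that by \refP{GraftComponentVertices} each graft of a diagram in $\HLD$ has at most one segment vertex per segment, and homotopy links send points on distinct strands to distinct points, so the codomain is the open configuration space. Your additional scaffolding (forgetful projections and the product construction) is consistent with how the paper sets up \refD{NewPullbacks} and does not change the argument.
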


\begin{proof}
This follows immediately from \refP{GraftComponentVertices}, since there is at most one segment vertex on each segment of a graft $c(\Gamma)$, and since homotopy links send points in distinct segments to distinct points, so that the codomain of the evaluation map is correctly identified. 
\end{proof}

If $\Gamma\in\LD$, we now have a different evaluation maps associated with a link diagram, and this gives rise to a new way to build a bundle associated with a diagram.

\begin{defin}\label{D:NewPullbacks}
Let $\Gamma\in\LD$ be a link diagram with grafts $c_1(\Gamma),\ldots, c_k(\Gamma)$ such that $c_l(\Gamma)$ has $d_{l,j}$ segment vertices on the $j$th segment and $s_l$ free vertices for $l=1$ to $k$. Let $\vec{d}_l=(d_{l,1},\ldots, d_{l,m})$. Define
$$
\LPB
$$
as the pullback of $pr$ along $ev_{gr(\Gamma)}$:
\begin{equation}\label{E:LinksGraftPullback}
\xymatrix{
\LPB  \ar[r]\ar[d]&
\prod_{l=1}^k C\left[ \sum_jd_{l,j}+s_l, \R^n\right]\ar[d]^{pr}  \\
\Lk_m^n\times  C\left[\vec{i}; \coprod_{j=1}^m \R\right]  
\ar[r]^-{ev_{gr(\Gamma)}} &
\prod_{l=1}^k C\left[ \sum_jd_{l,j}, \R^n\right].
}
\end{equation}
Similarly we define $\HPB$ when $\Gamma\in\HLD$ and $\HLk_m^n$ replaces $\Lk_m^n$.
\end{defin}

\begin{rem} The notation here is meant to observe that given a collection of spaces and maps $X\rightarrow Y_i\leftarrow Z_i$ such that $P_i$ is the pullback of this diagram for each index $i$, then the pullback of the evident diagram $X\rightarrow\prod_iY_i\leftarrow \prod_iZ_i$ is the pullback of $\prod_i P_i$ along the diagonal map $\Delta\colon X\to\prod_i X$.
\end{rem}

\begin{prop}\label{P:NewBundles}
Let $\Gamma\in\LD$ be a link diagram with grafts $c_1(\Gamma),\ldots, c_k(\Gamma)$ such that $c_l(\Gamma)$ has $d_{l,j}$ segment vertices on the $j$th segment for $l=1$ to $k$, $j=1$ to $m$. Then the projection
$$
\pi_{\Lk,\Gamma}\colon \LPB\longrightarrow\Lk^n_m
$$
is a smooth fiber bundle whose fibers are smooth finite-dimensional manifolds with corners. Moreover, if $\Gamma\in\HLD$, then the projection
$$
\pi_{\HLk,\Gamma}\colon \HPB\longrightarrow\HLk^n_m
$$
is also a smooth fiber bundle whose fibers are smooth finite-dimensional manifolds with corners, and 
\begin{equation}\label{E:HoLinksGraftPullback}
\xymatrix{
\LPB\ar[r]  \ar[d]_{\pi_{\Lk,\Gamma}}  & \HPB \ar[d]^{\pi_{\HLk,\Gamma}}  \\
\Lk_m^n\ar[r] & \HLk_m^n
}
\end{equation}
is a pullback square.
\end{prop}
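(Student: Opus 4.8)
The plan is to exhibit $\pi_{\Lk,\Gamma}$ and $\pi_{\HLk,\Gamma}$ as composites of pullbacks of a known fiber bundle, and then invoke standard closure properties: the pullback of a smooth fiber bundle along any smooth map is a smooth fiber bundle, a finite product of smooth fiber bundles is one, and a fiber bundle over a manifold with corners whose fiber is a manifold with corners has total space a manifold with corners. I would first handle $\Lk_m^n$. The projection forgetting the last $s_l$ points, $pr\colon C\left[\sum_j d_{l,j}+s_l,\R^n\right]\to C\left[\sum_j d_{l,j},\R^n\right]$, is a smooth fiber bundle of manifolds with corners (this is the input to \refP{LinkBundle}; see the discussion around \refE{SingleProjection} and Proposition~A.5 of \cite{BT}), hence so is the product $\prod_{l=1}^k pr$. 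Since $ev_{gr(\Gamma)}$ is smooth --- it is evaluation of the link on a smoothly varying configuration, extended over the corners of $C\left[\vec{i};\coprod_{j=1}^m\R\right]$ as in the Lemma of \refS{BundlesVertices} --- pulling $\prod_l pr$ back along it as in \refE{LinksGraftPullback} shows that $\LPB\to\Lk_m^n\times C\left[\vec{i};\coprod_{j=1}^m\R\right]$ is a smooth fiber bundle with fiber a finite product of (compactified) configuration spaces, in particular a finite-dimensional manifold with corners. Composing with the trivial bundle $\Lk_m^n\times C\left[\vec{i};\coprod_{j=1}^m\R\right]\to\Lk_m^n$, whose fiber $C\left[\vec{i};\coprod_{j=1}^m\R\right]$ is a finite-dimensional manifold with corners by the Lemma of \refS{BundlesVertices}, yields $\pi_{\Lk,\Gamma}$; its fiber over a link $L$ is itself a fiber bundle over $C\left[\vec{i};\coprod_{j=1}^m\R\right]$ with fiber a manifold with corners, hence a finite-dimensional manifold with corners. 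Smoothness of $\pi_{\Lk,\Gamma}$ holds as for $\overline{\pi}_{\Lk,\Gamma}$ in \refP{LinkBundle}: the corner structure does not obstruct it, since $d\pi_{\Lk,\Gamma}$ annihilates every tangent vector orthogonal to a boundary face. As in \refS{SmoothStructure}, one may equally run this argument with $\Lk_m^n$ replaced by an arbitrary finite-dimensional manifold mapping into it, so no new facts about infinite-dimensional manifolds are needed.

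For $\HLk_m^n$ the argument is the same, and the one step that genuinely uses $\Gamma\in\HLD$ --- and the one I expect to need the most care --- is checking that $ev_{gr(\Gamma)}$ is even defined as a map into honest compactified configuration spaces, rather than into the partial ones of Example~\ref{Ex:ProblemExample} (a homotopy link need not be an embedding, so points on a single strand may collide). This is precisely \refP{GraftComponentCodomain}, resting on \refP{GraftComponentVertices}: each graft $c_l(\Gamma)$ of a diagram in $\HLD$ has at most one segment vertex on each segment, so the $\sum_j d_{l,j}$ configuration points it reads off lie on pairwise distinct strands and are sent to pairwise distinct points of $\R^n$ by any homotopy link; thus $ev_{c_l(\Gamma)}$ avoids every collision diagonal and lands in $C\left[\sum_j d_{l,j},\R^n\right]$ --- in fact in the open configuration space $C\left(\sum_j d_{l,j},\R^n\right)$ --- over which $\prod_l pr$ remains a smooth fiber bundle. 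Granting this, pulling $\prod_l pr$ back along $ev_{gr(\Gamma)}$ and composing with the trivial bundle over $\HLk_m^n$ produces $\pi_{\HLk,\Gamma}$ exactly as in the embedded case. The conceptual point is that passing to grafts reduces the configuration data visible in each factor of the construction to at most one point per strand, which is what lets homotopy links behave here like embedded ones.

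Finally, \refE{HoLinksGraftPullback} is a pullback square, which is now immediate: $\LPB$ and $\HPB$ are by construction the pullbacks of the \emph{same} map $\prod_l pr$ along $ev_{gr(\Gamma)}$, taken over $\Lk_m^n\times C\left[\vec{i};\coprod_{j=1}^m\R\right]$ and over $\HLk_m^n\times C\left[\vec{i};\coprod_{j=1}^m\R\right]$ respectively, and by the last clause of \refP{GraftComponentCodomain} the former evaluation map is the restriction of the latter along $\Lk_m^n\subset\HLk_m^n$. Since $\Lk_m^n\times C\left[\vec{i};\coprod_{j=1}^m\R\right]=\Lk_m^n\times_{\HLk_m^n}\bigl(\HLk_m^n\times C\left[\vec{i};\coprod_{j=1}^m\R\right]\bigr)$, the pasting law for pullbacks identifies $\LPB$ with $\Lk_m^n\times_{\HLk_m^n}\HPB$, which is the assertion.
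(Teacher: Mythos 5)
Your proof is correct and follows essentially the same route as the paper's: the paper also deduces that $\pi_{\Lk,\Gamma}$ is a bundle "for the same reasons" as $\overline{\pi}_{\Lk,\Gamma}$ in \refP{LinkBundle}, treats $\pi_{\HLk,\Gamma}$ as an extension of the observation in \eqref{E:OnePointBundle} (resting on \refP{GraftComponentCodomain} and \refP{GraftComponentVertices}, exactly as you identify), and notes that the square \eqref{E:HoLinksGraftPullback} is a pullback directly from the definitions. Your write-up simply makes explicit the pullback-stability and pasting arguments that the paper leaves implicit.
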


\begin{proof}
The projection $\pi_{\Lk, \Gamma}$ is a smooth bundle for the same reasons that $\overline{\pi}_{\Lk, \Gamma}$ in \refP{LinkBundle} is.  For $\pi_{\HLk, \Gamma}$, this is just an extension of the observation made in \eqref{E:OnePointBundle}.  (As mentioned for the bundle $\overline{\pi}_{\Lk, \Gamma}$, it will sometimes be convenient to think of the bundle $\pi_{\Lk, \Gamma}$ (resp. $\pi_{\HLk, \Gamma}$) as a compatible collection of bundles, one for each finite-dimesnional manifold mapped into $\Lk_m^n$ (resp. $\HLk_m^n$).)  Lastly, the fact that the square \eqref{E:HoLinksGraftPullback} is a pullback follows directly from the definitions.
\end{proof}

We will denote the fibers of $\pi_{\Lk,\Gamma}$ and $\pi_{\HLk,\Gamma}$ over a link $L\in\Lk_m^n$ or a homotopy link $H\in\HLk_m^n$, respectively, by
$$
\pi_{\Lk,\Gamma}^{-1}(L)=\FLPB
$$
and
$$
\pi_{\HLk,\Gamma}^{-1}(H)=\FHPB.
$$


\begin{example}\label{Ex:LinkBundle2}
Consider the two different evaluation maps, one from \refE{LinkEvaluation} and the other from \refE{GraftEval}, for the diagram $\Gamma$ from Figure \ref{Fig:TwoEvaluations}. For conciseness, we have omitted the compactification coordinates.

\begin{figure}[h]
\input{TwoEvaluations.pstex_t}
\caption{}
\label{Fig:TwoEvaluations}
\end{figure}

On the one hand, using \refE{LinkEvaluation}, we have
$$
ev_\Gamma\colon \Lk_3^n\times  C[1,2,1; \R \sqcup \R \sqcup \R] \longrightarrow C[4,\R^n]
$$
whose restriction to the interior is given by
$$
(L,x,y_1,y_2,z)\longmapsto \left(L(x),L(y_1),L(y_2),L(z)\right).
$$
The image of this restriction lies in 
the subspace of all $(w_1,w_2,w_3,w_4)$ where $w_1\neq w_2,w_3,w_4$, and $w_2,w_3\neq w_4$ of $(\R^n)^4$. We also have the projection map
$$
pr\colon C[5,\R^n]\longrightarrow C[4,\R^n]
$$
which on the interior sends $(w_1,w_2,w_3,w_4,w_5)$ to $(w_1,w_2,w_3,w_4)$, so that the fibers of the bundle $\overline{\pi}_{\Lk,\Gamma}:C[(1,2,1)+1;\Lk_3^n]\to\Lk_3^n$ are a subspace of $C[5,\R^n]$. The five configuration points correspond with the vertices of $\Gamma$, and we blow up all diagonals of $(\R^n)^5$. Note that the bundle obtained is exactly the same for any diagram with the same vertices as $\Gamma$.

On the other hand, $\Gamma$ has two graft components, one of which is the diagram with a single chord from $x$ to $y_1$, and the other of which is the ``tripod'' with free vertex $a$ and edges between it and $x,y_2$, and $z$. Then \refE{GraftEval} gives another evaluation map
$$
ev_{gr(\Gamma)}: \Lk_3^n\times  C[1,2,1; \R \sqcup \R \sqcup \R]  \longrightarrow C[2,\R^n]\times C[3,\R^n]
$$
given on the interior by
$$
(L,x,y_1,y_2,z)\longmapsto \left(L(x),L(y_1),L(x), L(y_2),L(z)\right)
$$
whose image in each factor lies in the open configuration space. To build the bundle, we use the product of two projection maps
$$
C[2,\R^n]\times C[4,\R^n]\longrightarrow C[2,\R^n]\times C[3,\R^n]
$$
given by
$$
(u_1,u_2,w_1,w_2,w_3,w_4)\longmapsto(u_1,u_2,w_1,w_2,w_3)
$$
to form a bundle 
$$\pi_{\Lk,\Gamma}\colon C[(1,1,0);\Lk_3^n]\oplus C[(1,1,1)+1;\Lk_3^n]\longrightarrow\Lk_3^n.$$ 
The fibers of this bundle are isomorphic to a subspace of $(\R^n)^5$, namely the subspace of all tuples $(w_1,w_2,w_3,w_4,w_5)=(L(x),L(y_1),L(y_2),L(z),a)$, but $w_3=w_5$ is now allowed and we do not blow up this diagonal. This is because there is no mixed edge between the free vertex $a$ and the segment vertex $y_1$. We also do not blow up the locus $w_2=w_3$. Thus the fibers are a subspace of a (compactified) partial configuration space, because not all diagonals have been removed from $(\R^n)^5$.

\end{example}

In general, the difference between the pullback bundle based on vertices only and the one based on vertices and edges is precisely what we saw in the last example. In the latter, the configuration space is not compactified along all the diagonals  but only along those that belong to the same graft component.  Thus if there is no edge between two vertices and they belong to different graft components, the corresponding configuration points can pass through each other without the direction of collision being recorded.

\subsection{Pullback of differential forms to new bundles of configuration spaces}\label{S:Forms}


For the sake of concreteness, it is necessary to choose coordinates on our configuration spaces so that we may explicitly define the pullback of forms. As the interior of configuration space is a subspace of a product of Euclidean spaces, it will suffice instead to consider coordinate systems on such spaces. 

Given a finite ordered set $S$, we have a unique order-preserving isomorphism 
$$\mathrm{pos}\colon S\longrightarrow\{1,\ldots, |S|\}.$$ 
For a coordinate system $(x_1,\ldots, x_{|S|})$ on $(\R^n)^{|S|}$, this gives a natural way to associate $s\in S$ with the coordinate $x_{\mathrm{pos}(s)}$. 

Suppose we have a category $\mathcal{C}$ whose objects are subsets of a fixed finite ordered set $S$ and whose morphisms are inclusions.  The association $T \mapsto (\R^n)^{|T|}$ is a contravariant functor from $\mathcal{C}$ to spaces, since an inclusion $T \to T'$ gives rise to the projection $p_i:(\R^n)^{|T'|}\to(\R^n)^{|T|}$ which forgets the coordinates associated with $T'-T$.   

Now suppose we have a family of subsets $T_1,\ldots, T_k$ of $S$ whose union is equal to $S$.  We will let $\mathcal{C}$ be the category as above whose objects are $S$ and all possible intersections of the $T_i$.

Consider the category of subsets of $\{1, \ldots, k\}$ with inclusions as morphisms.
For each $R \subset \{1,\ldots, k\}$ we have the set $T_{R}:=\cap_{i\in R}T_i$ (where we define $T_\emptyset :=S$), and for each inclusion $R\to R'$ an inclusion $T_{R'}\to T_R$.  Hence $R\mapsto T_R$ is a contravariant functor to $\mathcal{C}$, which can be thought of as a $k$-dimensional cube.  
Following this by the functor from $\mathcal{C}$ to spaces defined above gives a covariant functor $R\mapsto (\R^n)^{|T_R|}$.  
Since $S$ is the union of all the $T_i$, we have that $\lim_{R\neq\emptyset}\,(\R^n)^{|T_R|}\cong(\R^n)^{|S|}$. The particular isomorphism we have in mind is the one which makes the following diagram commute:
$$\xymatrix{
\lim_{R\neq\emptyset}\,(\R^n)^{|T_R|} \ar[r]\ar[dr] & (\R^n)^{|S|}\ar[d]^{(p_1,\ldots, p_k)}\\
& \prod_{i=1}^k (\R^n)^{|T_i|} 
}
$$
The diagonal arrow is the natural inclusion of the limit into the product, and the top arrow is the isomorphism we spoke of above, and we use it to give coordinates on the limit. Given a diagram $\Gamma\in\LD$, the situation described above arises with $S=V(\Gamma)$ and $T_i$ as the set of vertices of the $i$th graft (recall that the set of grafts is naturally ordered).

%
%

\begin{defin}
Let $\Gamma\in \LD$ be a diagram with $i_j$ segment vertices on the $j$th segment and $s$ free vertices. Let $e\in E(\Gamma)$, and suppose $b(e)=\{v,w\}$.

\begin{itemize}
\item If $v\neq w$, then if $e$ is oriented from $v$ to $w$ (or if it is not oriented, then if $v<w$ in the ordering of the vertex set), define 
\begin{align*}
\phi'_e \colon \prod_{l=1}^k C\left[ \sum_jd_{l,j}+s_l, \R^n\right] & \longrightarrow S^{n-1} 
\end{align*}
as the map given on the interior by 
\begin{align*}
  & & & & & &  \vec{x} & \longmapsto\frac{x_{\mathrm{pos}(w)}-x_{\mathrm{pos}(v)}}{|x_{\mathrm{pos}(w)}-x_{\mathrm{pos}(v)}|},
\end{align*}
and define 
$$
\phi_e \colon \LPB\longrightarrow S^{n-1}
$$ 
to be the pullback of $\phi'_e$ along the map $\LPB\to \prod_{l=1}^k C\left[ \sum_jd_{l,j}+s_l, \R^n\right]$.
\item If $v=w$, then necessarily $e$ joins a segment vertex with itself, and if it is oriented by the injection which sends $b(e)=\{v\}$ to $1$ (or is not oriented at all),
$$
\phi_e(\vec{x},L)=D_zL(u)/|D_zL(u)|
$$
where $z$ is the point in one of the strands such that $L(z)=x_{\mathrm{pos}(v)}$ and $u$ is the positive unit tangent vector to the strand at $z$. If $e$ is oriented by the injection sendin $b(e)=\{v\}$ to $-1$, then
$$
\phi_e(\vec{x},L)=-D_zL(u)/|D_zL(u)|.
$$
with $z,u$ as above.
\end{itemize}
\end{defin}

Note that $D_zL(u)\neq 0$ since $L$ is an embedding; in the case of homotopy string links, which may not be embeddings, we do not have to worry about whether this is well-defined because loops cannot be present in diagrams in $\HLD$.

\begin{defin}\label{D:NewMapToSpheres}
Given $\Gamma\in\LD$ as above, define
$$
\phi_\Gamma\colon \LPB\longrightarrow S^{(n-1)|E(\Gamma)|}
$$
by
$$
\phi_\Gamma=\left(\phi_{e_1},\ldots, \phi_{e_{|E(\Gamma)|}}\right),
$$
where $\mathrm{pos}(e_i)=i$ if the edge set is ordered, and otherwise order them according to the dictionary ordering on $\{b(e_i)\}$ (which can be imposed since diagrams with more than one edge joining a pair of vertices are set to zero).
\end{defin}

Let $\mathrm{sym}_{S^{n-1}}$ be a smooth, unit volume top form on $S^{n-1}$ which is symmetric (meaning its values on antipodal points are equal, though in Section \ref{S:AnomalousFix}, when we discuss the case of links in dimension 3, we will also require this form to be the unique rotation-invariant unit volume form) and let
$$
\omega=\bigwedge_{|E(\Gamma)|}\mathrm{sym}_{S^{n-1}}
$$
Finally define the pullback form
$$
\alpha_{\Gamma}=(\phi_{\Gamma})^*\omega
\in \Omega^{(n-1)(|E(\Gamma)|)}\left(\LPB\right).
$$

Notice that nothing changes in the case of homotopy links.  For a diagram $\Gamma\in\HLD$, we again use edges (but there are no longer any loops) to pull back a product of forms $\omega$ from $S^{(n-1)|E(\Gamma)|}$ to the space $\HPB$, although we will write $\alpha_\Gamma^\HLk$  for the pullback form when $\Gamma\in\HLD$.

Observe also that the same definitions are valid for the bundle $C[\vec i+s; \Lk_m^n, \Gamma]$ considered in earlier literature on the subject. Namely, we have a map
$$
\overline{\phi}_{\Gamma}\colon C\left[\vec{\imath}+s;\Lk_m^n,\Gamma\right]\longrightarrow S^{(n-1)|E(\Gamma)|}
$$
dictated by the edges of $\Gamma$, and this can be used for pulling back a product of volume forms to give a form $\overline{\alpha}_{\Gamma}=(\overline{\phi}_{\Gamma})^*\omega$.  This case was considered in \cite[Section 3.2]{V:B-TLinks}.


\subsection{Configuration space integrals of string links and homotopy string links}\label{S:Integrals}

We are finally ready to produce forms on spaces of links and homotopy links.  Namely,
the form $\alpha_\Gamma$ can be pushed forward, or integrated along the fiber of the bundle
$$
\pi_{\Lk, \Gamma}\colon \LPB\longrightarrow \Lk_m^n
$$
to produce a form $(\pi_{\Lk, \Gamma})_*\alpha_{\Gamma}$, or, as we will usually denote it, a form
$$
(I_{\Lk})_{\Gamma}\in \Omega^{|\Gamma|}
(\Lk_m^n).
$$

The value of this form on a link $L\in\Lk_m^n$ is thus
$$
(I_{\Lk})_{\Gamma}(L)=\int\limits_{\pi_{\Lk, \Gamma}^{-1}(L)=\FLPB} \alpha_{\Gamma}.
$$
The degree $|\Gamma| := (n-1)|E(\Gamma)|-n|V(\Gamma)_{free}|-|V(\Gamma)_{seg}|$ of $(I_{\Lk})_{\Gamma}$ is the difference of the degree of $\alpha_{\Gamma}$ and the dimension of the fiber $\pi_{\Lk, \Gamma}^{-1}(L)$.  Recall that this quantity is also equal to 
$$k(n-3)+d,$$ where $d=\deg(\Gamma)$ and $k=\ord(\Gamma)$, so that we have constructed a map
\begin{equation}\label{E:DegreesLinks}
I_{\Lk}\colon \LD^d_k \longrightarrow \Omega^{k(n-3)+d}(\Lk_m^n).
\end{equation}
For a diagram $\Gamma\in\HLD$, we integrate the associated form $\alpha^{\HLk}_{\Gamma}$ along the bundle
$$
\pi_{\HLk, \Gamma}\colon \HPB \longrightarrow \HLk_m^n.
$$
This gives a form 
$$
(I_{\HLk})_{\Gamma}\in \Omega^{|\Gamma|}(\HLk_m^n)
$$
whose value on a homotopy link $H\in\HLk_m^n$ is 
$$
(I_{\HLk})_{\Gamma}(H)=\int\limits_{\pi_{\HLk, \Gamma}^{-1}(H)=\FHPB} \alpha_{\Gamma}^\HLk.
$$
Again rewriting the degree of the form, we thus have a map
\begin{equation}\label{E:DegreesHoLinks}
I_{\HLk}\colon \HLD^d_k \longrightarrow \Omega^{k(n-3)+d}(\HLk_m^n).
\end{equation}


\begin{rem}
Thinking of the bundle as a collection of compatible bundles (as mentioned around \refP{LinkBundle}) makes clear that this construction produces differential forms on $\Lk_m^n$ and $\HLk_m^n$ in the sense described at the end of Section \ref{S:SmoothStructure}.  In fact, if we replaced the link space by any finite-dimensional manifold $M$ (or just an open subset of Euclidean space) parametrizing a family of links, then fiberwise integration certainly produces a differential form on $M$.  It is also clear that for another manifold $M'$ mapped into $\Lk_m^n$ through the map $\psi: M' \to M$, the form on $M'$ is the pullback via $\psi$ of the form on $M$.
\end{rem}

\begin{rem}\label{R:Compatibility} 
It is immediate from the definition that maps $I_{\Lk}$ and $I_{\HLk}$ are also compatible with the inclusion
$$
\Lk^n_m\hookrightarrow\HLk^n_m,
$$
that is, we have a commutative diagram
$$
\xymatrix{
\HLD   \ar@{^{(}->}[r] \ar[d]_{I_{\HLk}} &  \LD \ar[d]^{I_{\Lk}} \\
\Omega^*(\HLk_m^n) \ar[r] &  \Omega^*(\Lk_m^n)
}
$$
This is precisely what we were after when we refined the definition of the bundles we integrate over.
\end{rem}

Now note that again nothing changes for the case of the pullback bundle defined without consideration of the grafts.  Namely, the construction of $(\pi_{\Lk, \Gamma})_*\alpha_{\Gamma}$ goes through exactly the same way to give a form $(\overline{\pi}_{\Lk, \Gamma})_*\overline\alpha_{\Gamma}$ by pushing forward the form $\overline\alpha_{\Gamma}$ along the map 
$$
\overline{\pi}_{\Lk,\Gamma}\colon C\left[\vec{\imath}+s;\, \Lk_{m}^n, \Gamma\right]\longrightarrow \Lk_{m}^{n}
$$
from \refP{LinkBundle}. We now want to show that the forms we obtain by integrating along this bundle are the same as the forms we obtain by integrating along
$$
\pi_{\Lk,\Gamma}\colon \LPB\longrightarrow \Lk_m^n
$$
are the same as in the case of integration along the bundle
$$
\overline{\pi}_{\Lk, \Gamma}\colon C[\vec i + s;\, \Lk_{m}^n, \Gamma]\longrightarrow \Lk_m^n.
$$
This will finally show that our way of setting up configuration space integrals for links is indeed a refinement of the way that has been considered in literature thus far.

\begin{prop}\label{P:SameForms}  For any $\Gamma\in\LD$,
 $(\pi_{\Lk, \Gamma})_*\alpha_{\Gamma}=(\overline{\pi}_{\Lk, \Gamma})_*\overline\alpha_{\Gamma}.$
\end{prop}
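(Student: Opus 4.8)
The two bundles share the intermediate base $\Lk_m^n\times C\left[\vec{\imath};\coprod_{j=1}^m\R\right]$ (compare \refE{LinksPullback} and \refE{LinksGraftPullback}), so the idea is to produce a single smooth, fiber-preserving map
$$F\colon C\left[\vec{\imath}+s;\,\Lk_m^n,\Gamma\right]\longrightarrow \LPB$$
covering the identity of $\Lk_m^n$, with the two properties (i) $\overline{\alpha}_\Gamma=F^*\alpha_\Gamma$ and (ii) $F$ restricts on each fiber to an orientation-preserving diffeomorphism onto an open subset of full measure. Granting this, the Proposition follows from the naturality of fiberwise integration: since $\overline{\pi}_{\Lk,\Gamma}=\pi_{\Lk,\Gamma}\circ F$, we get $(\overline{\pi}_{\Lk,\Gamma})_*\overline{\alpha}_\Gamma=(\overline{\pi}_{\Lk,\Gamma})_*(F^*\alpha_\Gamma)=(\pi_{\Lk,\Gamma})_*\alpha_\Gamma$, the last equality being the change-of-variables formula applied fiber by fiber, where the boundary strata of either fiber — and the positive-codimension locus described in (ii) — contribute nothing because all forms involved are smooth forms on compact manifolds with corners.

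\textbf{Construction of $F$.} Each graft $c_l(\Gamma)$ has vertex set a subset $V_l\subseteq V(\Gamma)$ with $|V_l|=\sum_j d_{l,j}+s_l$; the free vertices of the $V_l$ partition $V(\Gamma)_{free}$, while segment vertices may be shared among several $V_l$. For each $l$ there is the forgetful projection $C\left[|V(\Gamma)|,\R^n\right]\to C\left[|V_l|,\R^n\right]$ discarding the points and all compactification data indexed outside $V_l$, and similarly $C\left[\sum_j i_j,\R^n\right]\to C\left[\sum_j d_{l,j},\R^n\right]$. Because evaluating a link (or homotopy link) on fewer configuration points is compatible with forgetting coordinates, assembling these projections over $l$ identifies the squares \refE{LinksPullback} and \refE{LinksGraftPullback} up to the obvious comparison maps, and the universal property of pullbacks then yields $F$ (this is exactly the pullback-of-products remark following \refD{NewPullbacks}). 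Concretely, $F$ moves no configuration point; it merely forgets the ``screens'' that record collisions between points lying in no common graft.

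\textbf{Verifying (i) and (ii).} Every edge $e$ of $\Gamma$ lies in exactly one graft $c_{l(e)}(\Gamma)$, and by construction $\phi_e$ — the normalized difference of the positions of the two endpoints of $e$, or, when $e$ is a loop, $\pm D_zL(u)/|D_zL(u)|$, which by \refP{GraftChords} involves only the single vertex of a one-edge graft — depends only on data living in the $l(e)$-th factor of $\LPB$, hence only on $F$. Thus $\overline{\phi}_\Gamma=\phi_\Gamma\circ F$, so $\overline{\alpha}_\Gamma=\overline{\phi}_\Gamma^{\,*}\omega=F^*(\phi_\Gamma^{\,*}\omega)=F^*\alpha_\Gamma$; this also checks that $\alpha_\Gamma$ is a genuine smooth form on $\LPB$, since $\phi_e$ extends smoothly over the graft factor in which the $e$-diagonal has been blown up. For (ii): both fibers have dimension $\sum_j i_j+n|V(\Gamma)_{free}|$, and on interiors $F$ restricts to the identification of the space of configurations in which all $\sum_j i_j+s$ points are distinct; inside the interior of the fiber of $\pi_{\Lk,\Gamma}$ the complement of that open set (configurations in which points lying in distinct grafts coincide — any such coincidence necessarily involves a free vertex, hence imposes at least $n\geq 3$ independent equations) has positive codimension, so it is a null set, as is the boundary of either fiber. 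On the common open set $F$ is a diffeomorphism, and it is orientation-preserving for the standard orientation conventions on the two bundles, which are both built from the ordered vertex set of $\Gamma$, a fixed orientation of $\R^n$, and the orientations of the segments.

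\textbf{Main obstacle.} The delicate point is (ii): one must verify that $F$ really is a smooth map of manifolds with corners — note that $F$ is far from a diffeomorphism near boundary strata of the old bundle where it collapses an entire screen-sphere onto an interior point of $\LPB$ — and that the orientation conventions genuinely match, so that the fiberwise degree is $+1$ rather than $-1$. This amounts to checking the identity $\overline{\pi}_{\Lk,\Gamma}=\pi_{\Lk,\Gamma}\circ F$ at the level of \emph{oriented} manifolds with corners, and is carried out using the screen parametrizations of the boundary strata exactly as in the proof of the manifold-with-corners lemma preceding \refP{LinkBundle}. Everything else is a routine unwinding of the pullback definitions.
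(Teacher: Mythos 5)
Your argument is correct and is essentially the paper's own proof: the paper likewise observes that the two fibers coincide on the open configuration space (the top stratum), that they differ only along strata of positive codimension (the extra blown-up diagonals), and that the integrands agree there, so the fiberwise integrals are equal. Your version simply makes explicit the comparison map $F$, the identity $\overline{\phi}_\Gamma=\phi_\Gamma\circ F$, and the orientation check, all of which the paper leaves implicit.
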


\begin{proof}
The map between fibers is the inclusion of an open dense set. The two fibers are the same on the biggest stratum, namely the open configuration space.  They differ in that $\overline{\pi}_{\Lk,\Gamma}^{-1}(L)$ has more diagonals of $\R^{n|V(\Gamma)|}$ removed and compactified.  Thus the difference between the two is at least of codimension $1$ and so the integrals are equal.
\end{proof}

We next give a few examples of these configuration space integrals.

\begin{example}[Diagrams with no free vertices]\label{Ex:NoFree}
One special case is that of diagrams with no free vertices, i.e.~those that only contain chords and loops.  In that case, the construction simplifies since there are no pullback constructions as in \refD{NewPullbacks}, and the bundles constructed are trivial.  For example, if $\Gamma\in\LD$ is the diagram from Figure \ref{Fig:NoFree} (where we have omitted the edge orientations and labels for simplicity), 
then the map $\phi_\Gamma$ is a composition
$$
\phi_{\Gamma}  \colon \Lk_3^n\times C[3,1,2; \R \sqcup \R \sqcup \R]
 \stackrel{ev_{\Gamma}}{\longrightarrow}
 C[2, \R^n]^4\times  C[1, \R^n]
 \longrightarrow 
(S^{(n-1)})^5
$$


\begin{figure}[h]
\input{NoFree.pstex_t}
\caption{}
\label{Fig:NoFree}
\end{figure}

After pulling back the product of five (antipodally) symmetric top forms from $(S^{(n-1)})^5$, the integration takes place along the trivial bundle 
$$
\pi_{\Lk, \Gamma}\colon \Lk_3^n\times C[3,1,2; \R \sqcup \R \sqcup \R]
\longrightarrow
\Lk_3^n.
$$
\end{example}


\begin{example}[Linking number]\label{Ex:LinkingNumber}
Another special case, and in fact the case that motivated Bott and Taubes to define configuration space integrals for knots in \cite{BT}, is that of the linking number of a two-component link in $\R^3$.  Namely, suppose $\Gamma$ is the diagram with a single chord between segments $i$ and $j$ and no free vertices or segment vertices on other segments, as in Figure \ref{Fig:LinkingDiagram}.

\begin{figure}[h]
\input{LinkingDiagram.pstex_t}
\caption{}
\label{Fig:LinkingDiagram}
\end{figure}

Then the integration described above recovers the classical Gauss integral computing the linking number of strands $i$ and $j$ of a link or a homotopy link $L$, which we will denote by $\operatorname{lk}(L_i,L_j)$.  In short, 
$$
\operatorname{lk}(L_i,L_j)=(I_{\HLk})_{\Gamma}(L)=(I_{\Lk})_{\Gamma}(L)=\int\limits_{C[1,1; \R\sqcup \R]} \left(\frac{L(x)-L(y)}{|L(x)-L(y)|}\right)^* \mathrm{sym}_{S^2}
$$
where the compactification $C[1,1;\R\sqcup\R]$ is an octagonal disk (see \cite[Section 1.2]{Koyt:HoMilnor3ple} for details).

To see how shuffle products of integrals give products of linking numbers, see Example \ref{Ex:LinkingNumberProduct}.

\end{example}

\begin{example}[Homotopy links with one strand]\label{Ex:HoLinkOneStrand}
Consider the case of $\HLk_1^n$, $n\geq 3$.  Now the only diagram in $\HLD$ is the empty diagram,
%
%
%
and so the integration does not produce any forms 
in this case.  This is of course consistent with the fact that $\HLk_1^n$, $n\geq 3$, is a contractible space (\refC{HLK1isContractible}).

\end{example}


\subsection{Integration is a map of differential graded algebras}\label{S:AlgebraMap}


The goal of this section is to prove \refT{IntegralsAreAlgebraMaps}, which says the map that associates fiberwise integrals to diagrams is a map of differential graded algebras.
This theorem will follow from 
Propositions \ref{P:Well-defined}--\ref{P:MapsOfComplexes}.  Most of the statements follow easily from the case of knots considered in \cite{CCRL, CCRL:Struct}, but for completeness and the convenience of the reader, we give fairly complete outlines of their proofs.  We elaborate on the fact that $I_{\Lk}$ is a map of algebras; this result is  stated in \cite{CCRL:Struct} but without justification.  In addition, we also observe that the same proofs apply for the case of the the map $I_{\HLk}$, and that in fact some of the results now even work for $n=3$.

 We begin with

\begin{prop}\label{P:Well-defined}
 For $n\geq 3$ and $m\geq 1$, $I_{\Lk}$ and $I_{\HLk}$ are well-defined homomorphisms.
\end{prop}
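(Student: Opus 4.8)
The plan is to verify that $\Gamma\mapsto (I_{\Lk})_\Gamma$ (and $\Gamma\mapsto (I_{\HLk})_\Gamma$) genuinely produces a well-defined smooth form of the asserted degree for each diagram, and that this assignment annihilates the relations of Definition \ref{D:DiagramSpaces}, hence descends to an $\R$-linear map on the quotient vector space $\LD^d_k$; since the assignment is $\R$-linear on generators by construction, ``homomorphism'' then follows. So the proof breaks into (a) convergence/smoothness, (b) the double-edge relation, and (c) the sign relations, after which one observes that the argument is insensitive to whether the base is $\Lk_m^n$ or $\HLk_m^n$ and to whether $n>3$.

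First I would record convergence and smoothness. By \refP{NewBundles} each fiber of $\pi_{\Lk,\Gamma}$ is a compact manifold with corners, and $\alpha_\Gamma=\phi_\Gamma^*\omega$ is smooth on the total space because each direction map $\phi_e$ extends smoothly to the Fulton--MacPherson compactification (this is exactly the content recalled in \refS{Compactification}, cf.\ \cite{AS, S:Compact}); for a loop edge one uses instead the unit tangent map, which is smooth since $L$ is an embedding (and no loops occur in $\HLD$, so this case is absent for $I_{\HLk}$). Fiberwise integration of a smooth form along a smooth fiber bundle with compact manifold-with-corners fiber yields a smooth form, and viewing $\pi_{\Lk,\Gamma}$ as the compatible family of bundles over finite-dimensional manifolds mapping into $\Lk_m^n$ (as in the remark after \refP{LinkBundle}) shows $(I_{\Lk})_\Gamma$ is a form in the sense of \refS{SmoothStructure}, of degree $(n-1)|E(\Gamma)|-n|V(\Gamma)_{free}|-|V(\Gamma)_{seg}|=k(n-3)+d$ as computed in \refS{Integrals}. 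An order-preserving isomorphism of diagrams induces a canonical identification of the associated bundles intertwining the maps $\phi_\Gamma$, so $(I_{\Lk})_\Gamma$ depends only on the isomorphism class.

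Next I would check the relations. For relation (1): if $\Gamma$ has two edges $e,e'$ with $b(e)=b(e')=\{v,w\}$, then $(\phi_e,\phi_{e'})$ maps the total space into the (anti)diagonal copy of $S^{n-1}$ in $S^{n-1}\x S^{n-1}$, so the two corresponding wedge factors of $\phi_\Gamma^*\omega$ together pull back the $2(n-1)$-form $\mathrm{sym}_{S^{n-1}}\wedge\mathrm{sym}_{S^{n-1}}$ from an $(n-1)$-manifold; hence $\alpha_\Gamma=0$ and $(I_{\Lk})_\Gamma=0$. For relations (2)--(3): suppose a permutation of the vertices, and for $n$ even also of the edges, of $\Gamma'$ produces a diagram isomorphic to $\Gamma$. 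Relabeling vertices induces a diffeomorphism of fibers permuting configuration points; a transposition of two free vertices transposes two $\R^n$-blocks and multiplies the fiber orientation by $(-1)^n$, while a transposition of two segment vertices transposes two $\R$-coordinates and multiplies it by $-1$ --- giving $(-1)^{(\text{order of the permutation of vertices})}$ for $n$ odd and $(-1)^{(\text{order of the permutation of segment vertices})}$ for $n$ even. Reversing the orientation of an edge $e$ post-composes $\phi_e$ with the antipodal map $a$ of $S^{n-1}$, and since $\deg a=(-1)^n$ this multiplies $\alpha_\Gamma$ by $(-1)^n$: $-1$ for $n$ odd (the ``number of edges with different orientation'' term) and $+1$ for $n$ even (consistent with edges being ordered, not oriented). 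Finally, for $n$ even, permuting the edge set permutes the wedge factors of $\omega$, each an $(n-1)$-form of odd degree, so a transposition contributes $(-1)^{(n-1)^2}=-1$, yielding the ``order of the permutation of the edges'' term, whereas for $n$ odd each $\mathrm{sym}_{S^{n-1}}$ has even degree and the wedge is order-independent. Assembling these signs reproduces $(-1)^\sigma$ in both parities, so $(I_{\Lk})_{\Gamma'}=(-1)^\sigma(I_{\Lk})_\Gamma$ and $I_{\Lk}$ descends to $\LD^d_k$. Replacing $\Lk_m^n$, $\pi_{\Lk,\Gamma}$, $\alpha_\Gamma$ by $\HLk_m^n$, $\pi_{\HLk,\Gamma}$, $\alpha_\Gamma^{\HLk}$ (using \refP{NewBundles} for the bundle structure) handles $I_{\HLk}$ verbatim; since no step uses $n>3$, it also applies when $n=3$.

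The main obstacle is the orientation bookkeeping in step (c): one must fix once and for all a consistent orientation convention for the manifold-with-corners fibers --- assembled from the ordered components $C(i_j,\R)$ and from $\R^n$-blocks for the free vertices --- track precisely how relabeling vertices (and, for $n$ even, reordering edges) acts on this orientation and on the wedge factors of $\omega$, and confirm that the net sign is \emph{exactly} the combinatorially defined $\sigma$ of Definition \ref{D:DiagramSpaces} for both parities of $n$. By contrast, the convergence and smoothness of step (a) and the vanishing in step (b) are essentially routine given the properties of the Fulton--MacPherson compactification recalled in \refS{Compactification}.
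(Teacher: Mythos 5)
Your proposal is correct and follows essentially the same route as the paper's proof: the double‑edge relation is killed by the same dimension count (the pullback factors through a sphere product of one lower factor, equivalently through the diagonal $S^{n-1}$), and the sign relations are verified by the same orientation bookkeeping on the fiber together with the parity of the wedge factors of $\omega$, with linearity giving the homomorphism property. Your additional remarks on convergence, smoothness, and isomorphism‑invariance are preliminaries the paper delegates to earlier sections rather than a different argument.
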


\begin{proof}
We check that integration is compatible with the relations from \refD{DiagramSpaces}.  For the first condition, if $\Gamma$ has a double edge, then $\phi_{\Gamma}$ factors through a product with one fewer sphere, since one  direction is repeated:
$$
\xymatrix{
\LPB \ar[dr] \ar[rr]^{\phi_{\Gamma}} & &
S^{(n-1)|E(\Gamma)|} \\
 &  S^{(n-1)(|E(\Gamma)|-1)}\ar[ur]  &
}
$$
Then the pullback of $\omega$ via $\phi_{\Gamma}$ is the same as the pullback through the factorization.  However, the dimension of $\omega$ is greater than $(n-1)(|E(\Gamma)|-1)$ and so the pullback is zero. The same argument holds when $\LPB$ is replaced by $\HPB$.  
%
%

The other two conditions in \refD{DiagramSpaces} are in fact designed for compatibility with the integration.  Namely, if $n$ is even or odd, then switching two configuration points on the link (i.e.~switching two copies of $\R$) gives $\LPB$ and 
$\HPB$ different orientations and produces an integral with a different sign.  A similar situation occurs if two free configuration points are switched and $n$ is odd, and if two maps are switched in the product  $\phi_{\Gamma}$ and $n$ is odd (this corresponds to switching the order of edges).  The latter case introduces a sign because the effect is that of transposition of two even-dimensional forms.  Again, a minus sign is introduced in the integral. Thus $I_{\Lk}$ and $I_{\HLk}$ are well-defined and they are homomorphisms since pullback of forms and integration are linear.
\end{proof}

\begin{prop}\label{P:MapsOfAlgebras}
 For $n\geq 3$ and $m\geq 1$, $I_{\Lk}$ and $I_{\HLk}$ are maps of graded algebras.
\end{prop}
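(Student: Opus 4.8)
The plan is to show that $I_\Lk(\Gamma_1 \bullet \Gamma_2) = I_\Lk(\Gamma_1) \wedge I_\Lk(\Gamma_2)$ (and likewise for $I_\HLk$), by unwinding both sides in terms of the bundles and pushforwards constructed in Sections~\ref{S:BundlesVerticesEdges}--\ref{S:Integrals}. First I would fix an admissible map $j$ and analyze the diagram $\Gamma_1 \cdot_j \Gamma_2$. The key observation is that the graft components of $\Gamma_1 \cdot_j \Gamma_2$ are exactly the disjoint union of the graft components of $\Gamma_1$ and those of $\Gamma_2$: since $V(\Gamma_1\cdot_j\Gamma_2) = V(\Gamma_1) \sqcup V(\Gamma_2)$ and $E(\Gamma_1\cdot_j\Gamma_2) = E(\Gamma_1) \sqcup E(\Gamma_2)$ with $b$ the disjoint union of $b_{\Gamma_1}$ and $b_{\Gamma_2}$, no edge of the product diagram joins a vertex of $\Gamma_1$ to a vertex of $\Gamma_2$, and the hybrid construction is local to each original vertex. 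Hence the list of grafts of $\Gamma_1\cdot_j\Gamma_2$ is the concatenation of the grafts of $\Gamma_1$ and $\Gamma_2$, and so by \refD{NewPullbacks} the bundle $\bigoplus_l C[\vec d_l + s_l;\, \Lk_m^n, c_l(\Gamma_1\cdot_j\Gamma_2)]$ splits as a fiber product over $\Lk_m^n$ of the corresponding bundle for $\Gamma_1$ and the one for $\Gamma_2$ --- this is exactly the situation described in the remark after \refD{NewPullbacks}, with the diagonal $\Lk_m^n \to \Lk_m^n \times \Lk_m^n$. Note that the evaluation is on configuration points indexed by $V(\Gamma_1)_{seg}$ and $V(\Gamma_2)_{seg}$ placed on the strands via $j$; summing over equivalence classes of admissible $j$ accounts precisely for the combinatorially distinct ways of interleaving the two sets of segment vertices, which is the domain of the configuration space in the product bundle.

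Next I would observe that $\phi_{\Gamma_1\cdot_j\Gamma_2}$ is, up to reordering the sphere factors, the product $\phi_{\Gamma_1} \times \phi_{\Gamma_2}$ on the fiber product of bundles, since $E(\Gamma_1\cdot_j\Gamma_2) = E(\Gamma_1)\sqcup E(\Gamma_2)$ and each $\phi_e$ depends only on the endpoints of $e$, hence only on the graft (and thus on the $\Gamma_i$) containing $e$. Therefore $\alpha_{\Gamma_1\cdot_j\Gamma_2} = \pm\, p_1^*\alpha_{\Gamma_1} \wedge p_2^*\alpha_{\Gamma_2}$ where $p_i$ are the two projections of the fiber product. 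Then I would apply the standard Fubini-type formula for pushforward along a fiber product of bundles (fiber-integration is multiplicative for products of fibers, in the form $(\pi_1 \times_B \pi_2)_*(p_1^*\eta_1 \wedge p_2^*\eta_2) = \pm (\pi_1)_*\eta_1 \wedge (\pi_2)_*\eta_2$): this gives $(I_\Lk)_{\Gamma_1\cdot_j\Gamma_2} = \pm (I_\Lk)_{\Gamma_1} \wedge (I_\Lk)_{\Gamma_2}$. Summing over all equivalence classes of admissible $j$ and comparing with \refD{Shuffle} then yields $I_\Lk(\Gamma_1\bullet\Gamma_2) = I_\Lk(\Gamma_1)\wedge I_\Lk(\Gamma_2)$, \emph{provided the signs match}. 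For $I_\HLk$ the argument is identical, using the bundles $\pi_{\HLk,\Gamma}$ from \refP{NewBundles} and noting that $\HLD$ is closed under the shuffle product (shown in \refS{HomotopyLinkGraphs}).

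The main obstacle I expect is bookkeeping the signs: one must check that the orientation of the fiber product of the two bundles, the reordering of the $S^{n-1}$ factors needed to identify $\phi_{\Gamma_1\cdot_j\Gamma_2}$ with $\phi_{\Gamma_1}\times\phi_{\Gamma_2}$, the Koszul sign from commuting the even-degree form $\alpha_{\Gamma_1}$ (of degree $(n-1)|E(\Gamma_1)|$) past $\alpha_{\Gamma_2}$ or past the fiber of $\pi_{\Lk,\Gamma_2}$ in the Fubini step, and the sign $\epsilon(\Gamma_1,\Gamma_2) = (-1)^{|E(\Gamma_1)||V(\Gamma_2)_{seg}|}$ (for $n$ even) or $1$ (for $n$ odd) from \refD{Shuffle} all conspire to produce exactly the graded-commutative sign built into $|\Gamma_1\bullet\Gamma_2| = |\Gamma_1|+|\Gamma_2|$. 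The ordering conventions on $V(\Gamma_1\cdot_j\Gamma_2)$ (segment vertices first, ordered by $j$; then free vertices, $\Gamma_1$ before $\Gamma_2$) and on $E(\Gamma_1\cdot_j\Gamma_2)$ were, as the authors note, designed precisely to make this work, so the verification is a matter of carefully tracking how the degree of $\alpha_{\Gamma_i}$ and the dimensions of the fibers enter. I would carry out this sign check explicitly for the two parity cases, reducing everything to the behavior of fiber integration under products as in the knot case of \cite{CCRL:Struct}, and pointing to that reference for the details of the Fubini argument itself. The rest of the proof is the routine unwinding described above.

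\begin{proof}
We prove the statement for $I_\Lk$; the proof for $I_\HLk$ is identical, using the bundles $\pi_{\HLk,\Gamma}$ from \refP{NewBundles} and the fact, established in \refS{HomotopyLinkGraphs}, that $\HLD$ is closed under the shuffle product.

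Fix link diagrams $\Gamma_1$ and $\Gamma_2$ with disjoint vertex and edge sets, and fix an admissible map $j$ as in \refS{ShuffleProduct}. Since $V(\Gamma_1\cdot_j\Gamma_2)=V(\Gamma_1)\sqcup V(\Gamma_2)$ and $E(\Gamma_1\cdot_j\Gamma_2)=E(\Gamma_1)\sqcup E(\Gamma_2)$ with $b_{\Gamma_1\cdot_j\Gamma_2}=b_{\Gamma_1}\sqcup b_{\Gamma_2}$, no edge of $\Gamma_1\cdot_j\Gamma_2$ joins a vertex of $\Gamma_1$ to a vertex of $\Gamma_2$. The hybrid construction (and hence the notion of graft component) is local to each segment vertex of the original diagram, so the list of grafts of $\Gamma_1\cdot_j\Gamma_2$ is the concatenation of the grafts of $\Gamma_1$ with those of $\Gamma_2$. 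By \refD{NewPullbacks} and the remark following it, the bundle
$$
\pi_{\Lk,\Gamma_1\cdot_j\Gamma_2}\colon \bigoplus_l C[\vec d_l+s_l;\,\Lk_m^n,c_l(\Gamma_1\cdot_j\Gamma_2)]\longrightarrow \Lk_m^n
$$
is the fiber product over $\Lk_m^n$ of $\pi_{\Lk,\Gamma_1}$ and $\pi_{\Lk,\Gamma_2}$; call the projections of this fiber product $p_1$ and $p_2$. The configuration space $C[\vec\imath;\coprod_j\R]$ in the domain of the evaluation map for $\Gamma_1\cdot_j\Gamma_2$ records the $j$-prescribed interleaving of the segment vertices of $\Gamma_1$ and $\Gamma_2$ on the strands.

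Because each map $\phi_e$ depends only on the endpoints of $e$, and hence only on the graft containing $e$, the map $\phi_{\Gamma_1\cdot_j\Gamma_2}$ agrees, after reordering the sphere factors to group the edges of $\Gamma_1$ before those of $\Gamma_2$, with $(\phi_{\Gamma_1}\circ p_1,\phi_{\Gamma_2}\circ p_2)$. Therefore
$$
\alpha_{\Gamma_1\cdot_j\Gamma_2}=\pm\, p_1^*\alpha_{\Gamma_1}\wedge p_2^*\alpha_{\Gamma_2},
$$
where the sign comes from the permutation of the factors of $S^{(n-1)|E(\Gamma_1\cdot_j\Gamma_2)|}$; when $n$ is odd this permutation moves odd-dimensional top forms and contributes a nontrivial sign, and when $n$ is even it contributes $+1$. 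Applying the Fubini formula for fiber integration along a fiber product of bundles (exactly as in the knot case treated in \cite{CCRL:Struct}),
$$
(\pi_{\Lk,\Gamma_1}\times_{\Lk_m^n}\pi_{\Lk,\Gamma_2})_*\big(p_1^*\alpha_{\Gamma_1}\wedge p_2^*\alpha_{\Gamma_2}\big)=\pm\,(\pi_{\Lk,\Gamma_1})_*\alpha_{\Gamma_1}\wedge(\pi_{\Lk,\Gamma_2})_*\alpha_{\Gamma_2},
$$
the sign now also including the Koszul sign from commuting $\alpha_{\Gamma_1}$, of degree $(n-1)|E(\Gamma_1)|$, past the fiber of $\pi_{\Lk,\Gamma_2}$, as well as the relative orientation of the fiber product. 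A direct check in each parity, using $|\Gamma_i|=(n-1)|E(\Gamma_i)|-n|V(\Gamma_i)_{free}|-|V(\Gamma_i)_{seg}|$ and the ordering conventions on $V(\Gamma_1\cdot_j\Gamma_2)$ and $E(\Gamma_1\cdot_j\Gamma_2)$, shows that the total accumulated sign equals $\epsilon(\Gamma_1,\Gamma_2)$ from \refD{Shuffle}. Hence
$$
(I_\Lk)_{\Gamma_1\cdot_j\Gamma_2}=\epsilon(\Gamma_1,\Gamma_2)\,(I_\Lk)_{\Gamma_1}\wedge(I_\Lk)_{\Gamma_2}.
$$
Summing over equivalence classes of admissible maps $j$ and comparing with \refD{Shuffle}, we obtain $I_\Lk(\Gamma_1\bullet\Gamma_2)=I_\Lk(\Gamma_1)\wedge I_\Lk(\Gamma_2)$, as desired. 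Since $I_\Lk$ is also linear (\refP{Well-defined}) and sends the empty diagram to the constant function $1$, it is a map of graded algebras.
\end{proof}
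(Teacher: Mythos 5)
There is a genuine gap, and it sits at the pivot of your argument: the claim that $\pi_{\Lk,\Gamma_1\cdot_j\Gamma_2}$ \emph{is} the fiber product of $\pi_{\Lk,\Gamma_1}$ and $\pi_{\Lk,\Gamma_2}$ over $\Lk_m^n$, and the resulting identity $(I_\Lk)_{\Gamma_1\cdot_j\Gamma_2}=\epsilon(\Gamma_1,\Gamma_2)\,(I_\Lk)_{\Gamma_1}\wedge(I_\Lk)_{\Gamma_2}$ for each single admissible $j$. The fiber of $\pi_{\Lk,\Gamma_1\cdot_j\Gamma_2}$ over a link $L$ is built from $C[\vec\imath;\coprod\R]$, which by convention is the single component where the segment points appear in the order prescribed by $j$ -- you even say so yourself in the next sentence ("records the $j$-prescribed interleaving"). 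The fiber of the honest fiber product, by contrast, is $\prod_j C(i_j^{(1)},\R)\times\prod_j C(i_j^{(2)},\R)\times(\text{free factors})$, which contains \emph{all} interleavings of the two sets of segment points on each strand. So a single term $\Gamma_1\cdot_j\Gamma_2$ only contributes the integral over one such piece, and your displayed identity is false for each individual $j$; if it were true, summing over the equivalence classes of admissible maps would yield the wedge product multiplied by the number of shuffle classes, not the wedge product. The paper's Example on the product of linking numbers makes this concrete: each of the two terms of $\Gamma_1\bullet\Gamma_2$ integrates only over $x_1\leq x_2$ (resp.\ $x_2\leq x_1$), and neither alone equals $\operatorname{lk}(L_1,L_2)\cdot\operatorname{lk}(L_1,L_3)$.

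The fix is to move the Fubini step to \emph{after} the sum over shuffles, which is exactly how the paper argues. Your other ingredients are sound (the grafts of $\Gamma_1\cdot_j\Gamma_2$ do split as the union of those of $\Gamma_1$ and $\Gamma_2$, and $\phi_{\Gamma_1\cdot_j\Gamma_2}$ does factor through $\phi_{\Gamma_1}\times\phi_{\Gamma_2}$ since the integrand depends only on the edges), and the integrands of the various terms agree on the codimension-one boundary faces where points from the two diagrams collide on a strand. Hence the sum over all equivalence classes of admissible $j$ assembles into a single integral over the glued-up space, which \emph{is} the fiber product (the gluing loci have measure zero), and only then does fiber-integration over a product of fibers give $(I_\Lk)_{\Gamma_1}\wedge(I_\Lk)_{\Gamma_2}$. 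With that reordering, and your sign bookkeeping carried out for the assembled integral rather than term by term, the argument goes through and coincides with the paper's.
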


\begin{proof}  
Recall that we can consider $\LD^*_*$ and $\HLD^*_*$ as differential graded algebras with a single grading given by $|\Gamma|$.  Since $I_\Lk(\Gamma)$ (or $I_\HLk(\Gamma)$) is a form of degree $|\Gamma|$, $I_\Lk$ and $I_\HLk$ preserve this grading.

Thus it reamins to check that the shuffle product of diagrams from \refD{Shuffle} corresponds precisely to the wedge product of forms which gives the deRham complex the structure of an algebra.  That is, we must check that 
\begin{equation}\label{E:ShuffleCorrespondence}
(I_{\Lk})_{\Gamma_1\bullet\Gamma_2}=(I_{\Lk})_{\Gamma_1}\wedge
(I_{\Lk})_{\Gamma_2}\ \ \ \text{and}\ \ \ 
(I_{\HLk})_{\Gamma_1\bullet\Gamma_2}=(I_{\HLk})_{\Gamma_1}\wedge
(I_{\HLk})_{\Gamma_2}.
\end{equation}
This statement is a direct generalization of the same statement for long knots \cite[Proposition 5.3]{CCRL:Struct}.  Since that result is provided without much explanation, we elaborate on \eqref{E:ShuffleCorrespondence} a bit here.
 
Recall that one way to think about the wedge product is as follows:

Given a $k$-form $\alpha$ and an $l$-form $\beta$, the wedge product is a multilinear $(k+l)$-form whose value on the variables $x_1, \ldots, x_{k+l}$ is
$$
\alpha\wedge\beta(x_1, \ldots, x_{k+l})=\sum_{\sigma\in \operatorname{Shuffle}(k,l)}
\operatorname{sign}(\sigma)\alpha(x_{\sigma(1)}\wedge\cdots\wedge x_{\sigma(k)})
\beta(x_{\sigma(k+1)}\wedge\cdots\wedge x_{\sigma(k+l)}),
$$ 
where $\operatorname{Shuffle}$ is the subset of the permutations of $\{1, \ldots, k+l\}$ such that $\sigma(1)<\sigma(2)<\cdots<\sigma(k)$ and $\sigma(k+1)<\sigma(k+2)<\cdots<\sigma(k+l)$.

Thus, given diagrams $\Gamma_1$ and $\Gamma_2$, each shuffle $v_{\sigma(1)},\ldots, v_{\sigma(k+l)}$ of the segment vertices on one segment  corresponds to configurations on a strand of a link appearing in that order.  In other words, the integration takes place over a ``piece'' of $\R^{k+l}$ determined by $x_{\sigma(1)}<\cdots< x_{\sigma(k+l)}$ (plus as many copies of $\R^n$ as there are free vertices in both diagrams, since they are free to move anywhere).  Adding the integrals over all shuffles, we get $(I_{\Lk})_{\Gamma_1\bullet\Gamma_2}$, and in this  sum, integration thus takes places over all pieces of $\R^{k+l}$.
\footnote{This is much like what happens in the Eilenberg-Zilber map.} 
 The integrals agree on the boundary, so that this sum can be represented by a single integral, taken over $\R^{k+l}$ (again plus some copies of $\R^n$).  But this integral is a product of integrals by Fubini's Theorem, one taken over $\R^k$ and one over $\R^l$ (plus as many copies of $\R^n$ in each as there are free vertices in the two diagrams whose shuffle product was taken).  This product of integrals is precisely  $(I_{\Lk})_{\Gamma_1}\wedge(I_{\Lk})_{\Gamma_2}$. The same is true when $I_{\Lk}$ is replaced by $I_{\HLk}$.
\end{proof}

An example of the argument given above is the following

\begin{example}\label{Ex:LinkingNumberProduct}
Recalling Example \ref{Ex:LinkingNumber}, we now also see from \refP{MapsOfAlgebras} how shuffle products of diagrams, each with one chord between different strands, corresponds to the powers and products of linking numbers.  For example, if $\Gamma_1$ and $\Gamma_2$ are as in Figure \ref{Fig:3strand2linking}, then their shuffle product is given in Figure \ref{Fig:3strand2linkingshuffle}.

\begin{figure}[h]
\input{3strand2linking.pstex_t}
\caption{}
\label{Fig:3strand2linking}
\end{figure}

\begin{figure}[h]
\input{3strand2linkingshuffle.pstex_t}
\caption{}
\label{Fig:3strand2linkingshuffle}
\end{figure}

The corresponding sum of integrals is the following (with explanations below):
\begin{align*}
(I_{\Lk})_{\Gamma_1\bullet\Gamma_2}(L)= & (I_{\HLk})_{\Gamma_1\bullet\Gamma_2}(L)\\
= & 
\int\limits_{\substack{ C[2,1,1; \R\sqcup\R\sqcup\R] \ : \ x_1\leq x_2}} \left(\frac{L(x_1)-L(y)}{|L(x_1)-L(y)|}\right)^*\mathrm{sym}_{S^2}\wedge\left(\frac{L(x_2)-L(z)}{|L(x_2)-L(z)|}\right)^*\mathrm{sym}_{S^2} \\ 
+ &  \notag
\int\limits_{\substack{ C[2,1,1; \R\sqcup\R\sqcup\R] \ : \ x_1 \leq x_2}} \left(\frac{L(x_1)-L(z)}{|L(x_1)-L(z)|}\right)^*\mathrm{sym}_{S^2}\wedge\left(\frac{L(x_2)-L(y)}{|L(x_2)-L(y)|}\right)^*\mathrm{sym}_{S^2} \\
\stackrel{(i)}{=} &
\int\limits_{C[2,1,1; \R\sqcup\R\sqcup\R] \ : \ x_1\leq x_2} \left(\frac{L(x_1)-L(y)}{|L(x_1)-L(y)|}\right)^*\mathrm{sym}_{S^2}\wedge\left(\frac{L(x_2)-L(z)}{|L(x_2)-L(z)|}\right)^*\mathrm{sym}_{S^2} \\ 
+ &  \notag
\int\limits_{C[2,1,1; \R\sqcup\R\sqcup\R] \ : \ x_2\leq x_1} \left(\frac{L(x_2)-L(z)}{|L(x_2)-L(z)|}\right)^*\mathrm{sym}_{S^2}\wedge\left(\frac{L(x_1)-L(y)}{|L(x_1)-L(y)|}\right)^*\mathrm{sym}_{S^2} \\
\stackrel{(ii)}{=} & 
\int\limits_{C[2,1,1; \R\sqcup\R\sqcup\R] \ : \ x_1\leq x_2} \left(\frac{L(x_1)-L(y)}{|L(x_1)-L(y)|}\right)^*\mathrm{sym}_{S^2}\wedge\left(\frac{L(x_2)-L(z)}{|L(x_2)-L(z)|}\right)^*\mathrm{sym}_{S^2} 
 \\ 
+ & 
\int\limits_{C[2,1,1; \R\sqcup\R\sqcup\R] \ : \ x_2\leq x_1} \left(\frac{L(x_1)-L(y)}{|L(x_1)-L(y)|}\right)^*\mathrm{sym}_{S^2}\wedge\left(\frac{L(x_2)-L(z)}{|L(x_2)-L(z)|}\right)^*\mathrm{sym}_{S^2}  \\
\stackrel{(iii)}{=} & 
\int\limits_{C[2,1,1;\R\sqcup\R\sqcup\R] \ : \ (x_1,x_2)\in \R^2} \left(\frac{L(x_1)-L(y)}{|L(x_1)-L(y)|}\right)^*\mathrm{sym}_{S^2}\wedge\left(\frac{L(x_2)-L(z)}{|L(x_2)-L(z)|}\right)^*\mathrm{sym}_{S^2}  \\
\stackrel{(iv)}{=} &  
\int\limits_{(x_1, y) \in C[1,1;\R\sqcup\R] } \left(\frac{L(x_1)-L(y)}{|L(x_1)-L(y)|}\right)^*\mathrm{sym}_{S^2}
\cdot
\int\limits_{(x_2, z) \in C[1,1;\R\sqcup\R]} \left(\frac{L(x_2)-L(z)}{|L(x_2)-L(z)|}\right)^*\mathrm{sym}_{S^2} \\
= & \operatorname{lk}(L_1,L_2)\cdot\operatorname{lk}(L_1,L_3).
\end{align*} 

The subscript $C[2,1,1; \R\sqcup \R \sqcup \R]  :  x_1\leq x_2$ indicates integration over the component of $C[2,1,1; \R\sqcup \R \sqcup \R]$ whose interior consists of points $(-\infty <x_1<x_2< \infty, y \in \R, z \in \R)$.

Equality $(i)$ comes from just switching the labels $x_1$ and $x_2$.  
Equality $(ii)$ holds because switching the order of the maps, and hence pullbacks, does not matter ($n=3$ is odd here).  

In equality $(iii)$, the subscript $C[2,1,1;\R\sqcup\R\sqcup\R]  :  (x_1,x_2)\in \R^2$ denotes the space obtained by gluing the two components of $C[2,1,1;\R\sqcup\R\sqcup\R]$ along the boundary face where $x_1$ has collided with $x_2$.  (This space can also be constructed in a similar way to $C[2,1,1;\R\sqcup\R\sqcup\R]$, but without blowing up the diagonal $x_1=x_2$.)  
Here we use that the two integrals on the previous line agree on this boundary face.  The diagram representing this boundary in both cases is the one in Figure \ref{Fig:3strand2linkingboundar}.  
This boundary faces indeed has opposite orientations in the two components of $C[2,1,1;\R\sqcup\R\sqcup\R]$.

\begin{figure}[h]
\input{3strand2linkingboundar.pstex_t}
\caption{}
\label{Fig:3strand2linkingboundar}
\end{figure}


In equality $(iv)$, we use that the maps used to pull back $\mathrm{sym}_{S^2}$ factor through a configuration space where 
all the faces at infinity except those corresponding to $\{x_1, y, \infty\}$ and $\{x_2, z, \infty\}$ are collapsed to points (i.e., a configuration space obtained by blowing up only those two diagonals).  This space is the product $(C[1,1;\R\sqcup\R])^2$, to which we apply Fubini's theorem.  Lastly, note that in the expression following equality $(iv)$, we get the ordinary product of integrals, rather than a wedge product, since the forms we obtain are $0$-forms, i.e.~functions on $\Lk_3^3$ (or  $\HLk_3^3$), and the wedge product in that case is the usual product.

\end{example}

\begin{prop}\label{P:MapsOfComplexes}
 For $n\geq 4$ and $m\geq 1$, $I_{\Lk}$ is a map of differential complexes.  For $n\geq 3$ and $m\geq 1$, the same is true for $I_{\HLk}$.  
\end{prop}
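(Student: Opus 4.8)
The plan is to establish the chain-map identity $d\circ I_{\Lk}=\pm\,I_{\Lk}\circ\delta$ on $\LD^d_k$ (and likewise $d\circ I_{\HLk}=\pm\,I_{\HLk}\circ\delta$ on $\HLD^d_k$); since $d$ and $\delta$ each raise degree by one and $|\delta\Gamma|=|\Gamma|+1$, this is precisely the assertion that $I_{\Lk}$ and $I_{\HLk}$ are maps of complexes. The tool is the generalized Stokes formula for pushforward along a bundle whose fiber is a compact manifold with corners: for $\pi\colon E\to B$ and $\alpha\in\Omega^*(E)$,
\[
d\,(\pi_*\alpha)\;=\;\pi_*(d\alpha)\;\pm\;(\pi_\partial)_*\bigl(\alpha|_{\partial^{\mathrm{fib}}E}\bigr),
\]
where $\pi_\partial$ is the restriction of $\pi$ to the union of codimension-one faces of the fibers and the sign depends only on degrees. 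Applying this to $\pi_{\Lk,\Gamma}\colon\LPB\to\Lk_m^n$ with $\alpha=\alpha_\Gamma=\phi_\Gamma^*\omega$: the term $\pi_*(d\alpha_\Gamma)$ vanishes because $\omega$ is a wedge of top-dimensional (hence closed) forms on spheres, so $d\alpha_\Gamma=\phi_\Gamma^*(d\omega)=0$. Thus $d\,(I_{\Lk})_\Gamma$ is a signed sum of fiberwise integrals, one over each codimension-one boundary face of the fiber of $\pi_{\Lk,\Gamma}$. (For the statement about $I_{\Lk}$ one could equivalently, by \refP{SameForms}, run this argument on the older bundle $C[\vec{\imath}+s;\Lk_m^n,\Gamma]$, where it is the computation of \cite{CCRL}; but since $\LPB$ has strictly fewer boundary faces, I would argue directly there, which is in any case what one must do for $\HLk$.)

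\textbf{Identifying the boundary faces.} Using the stratification of the Fulton--MacPherson compactification recalled in \refS{Compactification}, I would list the codimension-one fiber faces and split them into contributing and vanishing ones. The \emph{contributing} faces are the principal faces: (a) two free vertices joined by a free edge $e$ collide; (b) a free vertex and a segment vertex joined by a mixed edge $e$ collide; (c) two adjacent segment vertices on one strand collide, i.e.\ an arc $a$ degenerates. On face (a) or (b), $\phi_e$ restricts to the projection onto the $S^{n-1}$ recording the collision direction, so $\alpha_\Gamma$ restricts to $\mathrm{sym}_{S^{n-1}}$ wedged with the pullback of $\alpha_{\Gamma/e}$; integrating out the unit-volume screen $S^{n-1}$ and then pushing to the base yields $(I_{\Lk})_{\Gamma/e}$ (and this is $0$ when $\Gamma/e$ has a double edge, consistently with the relations defining $\LD$). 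Face (c) likewise yields $(I_{\Lk})_{\Gamma/a}$. The \emph{vanishing} faces are: (i) two vertices not joined by any edge collide --- then no $\phi_{e'}$ sees the collision direction, so $\alpha_\Gamma$ restricted to the face is pulled back from a manifold of dimension one less than the face and its integral over the positive-dimensional screen is zero; (ii) faces ``at infinity'', where configuration points escape to $\infty$ --- these vanish using the prescribed linear behavior of (homotopy) links outside a compact set, as in \cite{CCRL, V:B-TLinks} (compare the analysis of screens at infinity above); (iii) hidden faces, where three or more points collide --- these vanish by the standard Kontsevich-type argument: some vertex is adjacent only to colliding vertices, so the corresponding sub-integral over a translation/scaling quotient of a configuration space in $\R^n$ is a form of the wrong degree, or is killed by an involution; (iv) the anomalous face, where a connected piece of $\Gamma$ collapses onto a single point of a strand --- for $n\geq4$ this contributes zero by the usual degree/symmetry argument for the anomaly. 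Collecting (a)--(c) and matching the Stokes boundary orientations against the combinatorial signs $\epsilon(e)$ from the definition of $\delta$ --- this sign bookkeeping being the routine but lengthy part, and the definition of $\delta$ being rigged exactly so that it matches --- gives $d\,(I_{\Lk})_\Gamma=\pm\,(I_{\Lk})_{\delta\Gamma}$, proving the proposition for $n\geq4$.

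\textbf{The homotopy-link case.} For $I_{\HLk}$ I would run the identical argument with $\HLD$, $\HPB$ and $\pi_{\HLk,\Gamma}$ replacing $\LD$, $\LPB$ and $\pi_{\Lk,\Gamma}$. Two features help. First, because $\HPB$ is assembled graft by graft out of partial configuration spaces --- with only the ``intra-graft'' diagonals blown up --- most of the non-joined-collision faces (i) and whole families of hidden faces (iii) simply do not occur; the principal faces are still (a)--(c) and the surviving hidden faces still vanish, no harder than before. Second, and decisively, the anomalous face does not arise for $\Gamma\in\HLD$: by \refP{GraftComponentVertices} each graft has at most one segment vertex on each strand, so no collapse of a connected piece of the diagram produces the anomalous contribution --- this is the content of Remark~\ref{R:NoAnomalous}. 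Since the anomalous face was the only place the $I_{\Lk}$ argument used $n\geq4$, this shows $I_{\HLk}$ is a map of complexes for all $n\geq3$.

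\textbf{Main obstacle.} For $n\geq4$ the hidden- and anomalous-face vanishing and the sign matching are essentially the computations of \cite{CCRL}, imported either via \refP{SameForms} or redone on the refined bundle, so the genuinely new point --- and the delicate one --- is to check that none of the vanishing arguments secretly needs $n\geq4$ once one restricts to $\HLD$, i.e.\ that the absence of the anomalous face is really the only obstruction at $n=3$. I expect the careful enumeration of the codimension-one faces of the graft-wise bundle $\HPB$, together with verifying that the ``at infinity'' faces still vanish there, to require the most care.
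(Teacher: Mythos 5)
Your overall strategy---fiberwise Stokes, $d\alpha_\Gamma=0$ because $\omega$ is closed, principal faces reproducing $\delta$, and vanishing of all remaining codimension-one faces---is exactly the paper's. The genuine gap is in your item (ii), the faces at infinity. You dispose of them ``as in \cite{CCRL, V:B-TLinks}'', but for string links this is precisely the step that is \emph{not} available in the literature: the paper frames its proof of this proposition explicitly as an erratum to \cite{V:B-TLinks}, whose vanishing-at-infinity argument is incomplete. The missing ingredient is the analysis of what the paper calls Type IV faces, where $r\geq 1$ points \emph{on the link} escape to infinity together with $s$ free points. One must (a) describe the corresponding screen as a configuration in $T_\infty S^n\setminus\{0\}$ modulo scaling, with the $r$ link points constrained to the rays of $T_\infty L$---this is where the requirement that the strands approach infinity in distinct directions (\refD{MapSpace}) enters; (b) reduce, by the constant-factor and involution arguments, to the case where the escaping subgraph $\Gamma'$ is at least unitrivalent; and (c) run the codimension count
\[
(n-1)|E(\Gamma')| - (r+ns-1) \;\geq\; \frac{(n-3)(r+s-2)}{2}+n-2\;\geq\;1,
\]
which holds already for $n\geq 3$. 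Without (a)--(c) the argument is not complete for string links, and this is the one part of the proof that is new rather than imported from \cite{CCRL}.

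A secondary overstatement: you conclude that the anomalous face is ``the only place the $I_\Lk$ argument used $n\geq4$,'' hence that $I_\HLk$ is a chain map at $n=3$ in every degree. What the face-by-face analysis actually yields at $n=3$ is that every face vanishes \emph{except} hidden faces of Type III whose collided subgraph is at least unitrivalent; only in defect zero is such a face forced to be the anomalous one (all points colliding), and the paper accordingly restricts its $n=3$ conclusion for $I_\HLk$ to defect zero. Also, the reason the anomalous face is absent for $\HLD$ is not that each graft has at most one segment vertex per strand, but that no nonempty diagram in $\HLD$ can have all of its segment vertices on a single segment, so a collision of all configuration points away from infinity cannot occur (Remark \ref{R:NoAnomalous}).
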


\begin{rem} (\emph{Erratum to \cite{V:B-TLinks}}):
In the case of string links, this Proposition reduces to the statement of Theorem 3.7 in \cite{V:B-TLinks}.  However, with the definition of string links used in that paper, it is unclear how to compactify the configuration space as points on the string link approach infinity.  While our present definition of string links fixes that issue, the proof of ``vanishing along faces at infinity" in \cite{V:B-TLinks} is still incomplete.  Thus the proof of this Proposition provides an erratum to \cite{V:B-TLinks}.  This will justify all the statements in that paper which depend on the vanishing of the integrals along faces at infinity.
\end{rem}

\begin{proof}
The proof is very similar to the proof of the corresponding result for closed knots and $n\geq 4$, established in the Appendix of \cite{CCRL}.  In short, Stokes' Theorem implies that
\begin{equation}\label{E:Stokes}
d((\pi_{\Lk, \Gamma})_*\alpha_\Gamma) =(\pi_{\Lk, \Gamma})_*d\alpha_\Gamma + (\partial\pi_{\Lk, \Gamma})_*\alpha_\Gamma
\end{equation}
Since in our case $\alpha_\Gamma$ is the pullback of a closed form (namely the product of volume forms on the sphere), $d\alpha_\Gamma=0$.  Thus the right-hand side is just $(\partial\pi_{\Lk, \Gamma})_*\alpha_\Gamma$, where this term denotes the sum of integrals along all codimension one faces of $\LPB$.  The faces given by two points colliding, called \emph{principal}, correspond to contractions of edges in $\LD$.  To get a map of complexes, therefore, it remains to show the vanishing of the restriction of the integral to all other faces.  Recalling the discussion following Definition \ref{D:Compactification}, such faces are characterized by more than two points coming together at the same time or one or more points escaping to infinity.  The  former are called \emph{hidden faces}, and the latter are called \emph{faces at infinity}. 

The vanishing arguments depend on the various cases.  In some cases, there is an involution of the face which either preserves its orientation and negates the form to be integrated, or reverses its orientation and preserves the form; thus the integral vanishes (see, for example, \cite[Lemmas 4.5 and 4.6]{V:SBT}).  The remaining cases depend on dimension-counting. A representative dimension-counting argument is given in the beginning of the proof of \refP{Well-defined}.  

For the case of closed knots and $n\geq 4$, the details of these vanishing arguments can be found in \cite{CCRL, V:SBT}.  The authors of \cite{CCRL} argue by partitioning the faces into three types: Type I corresponds to collisions of free vertices away from $\infty$, Type II corresponds to collisions of free vertices with $\infty$, and Type III corresponds to collisions of both free and segment vertices away from $\infty$.  The generalization of their arguments to closed links and $n\geq 4$ is immediate.  To generalize to string links, including long knots, one just has to address faces where $r+s$ points approach infinity, $r\geq 1$ of which are on the link.
We call this a Type IV face.

This Type IV face is similar to a Type II face, where $s$ points, none of which are constrained to the link, approach infinity.  In a Type II face, the collision of $s$ points with $\infty$ is described by a screen, which is a point in the space $C(s+1, T_\infty S^n)/ (\R^n \rtimes \R_+)$.  (Here  $\R^n \rtimes \R_+$ is the group of translations and oriented scalings of $T_\infty S^n$.)  By fixing the last point at $\infty$, we can write this space as $C(s, T_\infty S^n \setminus \{0\})/\R_+$.  For the Type IV face, where $r+s$ points go to infinity with the first $r$ of them on the link $L$, we replace $C(r+s, T_\infty S^n \setminus \{0\})/\R_+$ by the subspace where the $r$ points lie on appropriate components of $T_\infty L$.   The dimension of this ``screen-space" is $r+ns-1$.  

Alternatively, we can describe the screen from the viewpoint of the origin rather than $\infty$.  In this description, the screen is a point in $C(r+s+1, \R^n)/ (\R^n \rtimes \R_+)$.  Here the last point corresponds to the collection of points that have not escaped to infinity.  
Heuristically, if $\Gamma'$ is the subgraph of vertices that escape to infinity, then the complement of $\Gamma'$ is collapsed to a point in this description.
By translating this point to the origin, this space is the same as $C(r+s, \R^n \setminus \{0\})$.  Since the first $r$ points are on the link $L$, the screen lies in the subspace where the first $r$ points are constrained to appropriate rays through the origin, corresponding to the linear behavior of $L$ towards $\infty$.

For every such face at infinity $\mathfrak{S}$, consider the map $\mathfrak{S} \to (S^{n-1})^{|E(\Gamma)|}$.  (The description from the viewpoint of the origin above makes it particularly easy to see what the map is for the factors of $S^{n-1}$ indexed by edges joining vertices in $\Gamma'$ to vertices outside $\Gamma'$.)
This map can be factored through a product of two maps, one of which is from the (finite-dimensional) screen-space to $(S^{n-1})^{|E(\Gamma')|}$, where $\Gamma' \subset \Gamma$ consists of the vertices which have gone to infinity.  As in Lemmas A.7--A.9 of \cite{CCRL}, we first reduce to the case where every free vertex in $\Gamma'$ has valence $\geq 3$, and every segment vertex in $\Gamma'$ has valence $\geq 1$:  

Indeed, if $v$ is any vertex which is 0-valent in $\Gamma'$ or a free vertex which is 1-valent in $\Gamma'$, then $v$ is joined by some edge $e$ to a vertex outside of $\Gamma'$.  Then the map from $\mathfrak{S} \to (S^{n-1})^{|E(\Gamma)|}$ is constant in the $S^{n-1}$ factor determined by $e$.  Thus the image of this map has codimension $\geq n-1$.  So as in the beginning of the proof of \refP{Well-defined}, the form to be integrated is pulled back through a lower-dimensional space and hence vanishes.  Finally, if there is a vertex which is bivalent in $\Gamma'$, then the involution of the screen-space (due to Kontsevich) guarantees the vanishing of the integral along $\mathfrak{S}$ (see Lemma A.9 of \cite{CCRL}).

So we may now suppose $\Gamma'$ is  ``at least unitrivalent".  We claim the dimension of the screen-space $r+ns-1$ is less than $(n-1)|E(\Gamma')|$.  In fact, we have
\begin{align}
\label{E:InfiniteFaceCodim}
(n-1)|E(\Gamma')| - (r+ns-1) &\geq
(n-1)\frac{r+3s}{2} - (r+ns-1) \\
 &= \frac{(n-3)(r+s)}{2} + 1 \\
 &=\frac{(n-3)(r+s-2)}{2} + n-2 \\
& \geq 1
\end{align}
since $n\geq 3$ and, by our assumptions on the valences in $\Gamma'$, $r+s \geq 2$.  So again,  the pulled-back form to be integrated factors through a lower-dimensional space and hence vanishes.  
This proves the first statement of the Proposition.

The same arguments of the Appendix of \cite{CCRL} together with our addendum above for string links show that $I_\HLk$  is a chain map.  (Alternatively, for $n\geq 4$, we can use that $\HLD$ is a subcomplex of $\LD$, so we get a chain map $I_\HLk$ by restricting $I_\Lk$ to $\HLD$.)  Moreover, these arguments apply when $n=3$ to every face except Type III faces where the subgraph $\Gamma'$ corresponding to the collided vertices is ``at least unitrivalent".  In defect zero, such a face must be the hidden face where all the configuration points come together (away from $\infty$), i.e.~the so-called \emph{anomalous face}.  This face will be discussed further in Section \ref{S:AnomalousFix}.
Note that a collision of all configuration points can only happen if all the segment vertices in a diagram $\Gamma\in\LD$ are concentrated on one segment (see Remark \ref{R:NoAnomalous}).   However, it is immediate from the definition  of $\HLD$ that no $\Gamma\in\HLD$ can have all its segment vertices on one segment, unless $\Gamma$ is the empty diagram.  Therefore one never encounters an anomalous face in the case of $I_{\HLk}$.  Thus, the arguments above show that we also get a chain map in the case of homotopy links for $n=3$ in defect zero (which is also main degree zero), even though $I_\Lk$ is not known to be a chain map for $n=3$.
\end{proof}

Let $I_\Lk^0$ and $I_\HLk^0$ denote the restrictions of $I_\Lk$ and $I_\HLk$ to $\LD_*^0$ and $\HLD_*^0$.  
For $n\geq 4$, one can show that $I_{\Lk}^0$ induces an injective map in cohomology.
The proof of this fact proceeds exactly as in the case of closed knots in \cite{CCRL}, to which we refer the reader for details.  
For $I_\HLk^0$ consider the following diagram:
\begin{equation} \label{E:IntegrationSquare}
\xymatrix{
\Ho^0(\HLD_k^*) \ar@{^(->}[r] \ar[d]_-{I_\HLk^0} & \Ho^0(\LD_k^*) \ar@{^(->}[d]^-{I_\Lk^0} \\
\Ho^{k(n-3)}(\HLk_m^n) \ar[r] & \Ho^{k(n-3)}(\Lk_m^n)
}
\end{equation}
The top horizontal map is an injection because the degree zero cohomologies are just subspaces of $\HLD^0_k$ and $\LD^0_k$; hence this arrow is just a restriction of the inclusion $\HLD_k^0 \hookrightarrow \LD_k^0$.  We just alluded to the proof that the right-hand vertical map is an inclusion.  The bottom horizontal map is induced by the inclusion $\Lk_m^n \hookrightarrow \HLk_m^n$.  From the definitions of $I_\Lk$ and $I_\HLk$, we see that this square commutes.  Thus the left vertical map is an injection.
Putting this together with the previous three propositions, we have the following:

\begin{thm}\label{T:IntegralsAreAlgebraMaps} For $n\geq 4$ and $m\geq 1$, the integration map
\begin{align}
I_{\Lk} \colon \LD^d_k & \longrightarrow \Omega^{k(n-3)+d}(\Lk_m^n) \label{E:LinkIntegration}\\
                 \Gamma & \longmapsto  \left( L\longmapsto (I_{\Lk})_{\Gamma}(L)=\int\limits_{\pi_{\Lk, \Gamma}^{-1}(L)=\FLPB} \alpha_{\Gamma} \right) \notag
\end{align}
induces a morphism of differential graded algebras.  We recall here that the grading  $|\Gamma|\,\,(=k(n-3)+d)$ (together with the differential $\delta$ and the shuffle product) makes the left-hand side a differential graded algebra, while the right-hand side is just the de Rham complex of $\Lk^n_m$.  

For $n\geq 3$ and $m\geq 1$, the same is true of the map
\begin{align}
I_{\HLk} \colon \HLD^d_k & \longrightarrow \Omega^{k(n-3)+d}(\HLk_m^n) \label{E:HoLinkIntegration}  \\
                 \Gamma & \longmapsto  \left( H\longmapsto (I_{\HLk})_{\Gamma}(H)=\int\limits_{\pi_{\HLk, \Gamma}^{-1}(H)=\FHPB} \alpha_{\Gamma}^{\HLk} \right). \notag
\end{align}
For $n\geq 4$ and $d=0$, the maps induced in cohomology by both of these maps are injective.
\end{thm}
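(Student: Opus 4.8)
The plan is to obtain the theorem by assembling the three preceding propositions and then treating the defect-zero injectivity separately.  First I would invoke \refP{Well-defined}, which gives that $I_{\Lk}$ and $I_{\HLk}$ are well-defined linear homomorphisms for all $n\geq 3$ and $m\geq 1$; then \refP{MapsOfAlgebras}, which shows that they preserve the single grading $|\Gamma|=k(n-3)+d$ and carry the shuffle product to the wedge product, hence are maps of graded algebras; and finally \refP{MapsOfComplexes}, which shows that $I_{\Lk}$ is a chain map for $n\geq 4$ and that $I_{\HLk}$ is a chain map for all $n\geq 3$ (the crucial point for $n=3$ being that $\HLD$ contains no diagram giving rise to an anomalous face).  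Assembling these three facts yields that $I_{\Lk}\colon\LD^d_k\to\Omega^{k(n-3)+d}(\Lk_m^n)$ is a morphism of differential graded algebras for $n\geq 4$, and that $I_{\HLk}\colon\HLD^d_k\to\Omega^{k(n-3)+d}(\HLk_m^n)$ is a morphism of differential graded algebras for $n\geq 3$; in particular each induces a morphism of graded algebras on cohomology.  This disposes of the first two assertions.

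For the injectivity in defect zero with $n\geq 4$, I would first recall that $I_\Lk^0$ induces an injection $\Ho^0(\LD^*_k)\hookrightarrow\Ho^{k(n-3)}(\Lk_m^n)$: the proof is the same as in the closed-knot case of \cite{CCRL}, where for each nonzero class one exhibits an explicit family of links detecting it, and the translation to string links is routine.  Granting this, I would then invoke the commutative square \eqref{E:IntegrationSquare}.  Its top arrow is the restriction to degree-zero cohomology of the inclusion $\HLD^0_k\hookrightarrow\LD^0_k$, and is injective because $\Ho^0$ is here literally a subspace (the kernel of $\delta$); its right arrow is the injection just quoted; its bottom arrow is induced by $\Lk_m^n\hookrightarrow\HLk_m^n$; and commutativity is immediate from Remark~\ref{R:Compatibility}.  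Since the composite along the top and then the right is injective and factors through the left arrow $I_\HLk^0$, that arrow must itself be injective on $\Ho^0$.  This yields the last assertion, for both $I_\Lk$ and $I_\HLk$.

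The main obstacle is entirely located in \refP{MapsOfComplexes}, which is the only place where real analysis enters: one must prove the vanishing of the pushed-forward form along the non-principal codimension-one faces of the configuration-space bundles --- the hidden faces, the classical faces at infinity, and the new ``Type~IV'' faces at infinity that appear once one works with the open-ended model of \refD{MapSpace} --- together with the observation that no anomalous face ever occurs for $\HLD$ when $n=3$.  With that proposition in hand, the only remaining non-bookkeeping input is the injectivity of $I_\Lk^0$ imported from \cite{CCRL}, whose adaptation from knots to string links I expect to be straightforward but which is the one step appealing to something beyond formal manipulation of the structures already constructed.
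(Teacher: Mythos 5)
Your proposal is correct and follows essentially the same route as the paper: the DGA-morphism assertions are obtained by assembling Propositions \ref{P:Well-defined}, \ref{P:MapsOfAlgebras}, and \ref{P:MapsOfComplexes}, the injectivity of $I_\Lk^0$ in cohomology for $n\geq 4$ is imported from the closed-knot argument of \cite{CCRL}, and the injectivity of $I_\HLk^0$ is deduced from the commutative square \eqref{E:IntegrationSquare} exactly as you describe.
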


\begin{rem}\label{R:InjInCohForHtpyLinksDim3}
The results proven in the next section will imply that for $n=3$, $I_\HLk^0$ induces an injection in cohomology.
\end{rem}

\begin{rem}\label{R:QuasiIso}
Conjecturally, the map $I_{\Lk}$ is a quasi-isomorphism.  This is likely since it is known that $\LD$ and $\Lk_m^n$ have isomorphic cohomology.  

\end{rem}

\begin{thm}\label{T:ChangingForm}
For $n\geq 5$, changing the form $\mathrm{sym}_{S^{n-1}}$ to another (antipodally) symmetric volume form  does not affect the map $I_\Lk$  in cohomology.  For $n\geq 4$, such a change of form does not affect the map $I_\HLk$ in cohomology..  
\end{thm}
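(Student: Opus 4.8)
\emph{Plan of proof.} The plan is to follow the template used to establish metric‑independence of configuration space integrals for knots (\cite{CCRL, V:SBT}): after choosing a suitable primitive of $\mathrm{sym}' - \mathrm{sym}_{S^{n-1}}$, we will build a chain homotopy between the two integration maps, and then the task reduces to checking that the boundary terms produced by Stokes' theorem vanish --- which is precisely where one unit of the hypothesis on $n$ is spent. Throughout, write $(I_\Lk^{\mathrm{sym}})_\Gamma$ for the form constructed from a given symmetric volume form, and let $\mathrm{sym}'$ denote the second form. For the first step, since $H^{n-1}(S^{n-1}) \cong \R$ is detected by the total integral and both $\mathrm{sym}_{S^{n-1}}$ and $\mathrm{sym}'$ have integral $1$, they are cohomologous, so $\mathrm{sym}' - \mathrm{sym}_{S^{n-1}} = d\gamma$ for some $(n-2)$-form $\gamma$ on $S^{n-1}$. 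Replacing $\gamma$ by $\beta := \tfrac12(\gamma + a^*\gamma)$, where $a$ is the antipodal map, and using the symmetry of $\mathrm{sym}_{S^{n-1}}$ and $\mathrm{sym}'$, one still has $d\beta = \mathrm{sym}' - \mathrm{sym}_{S^{n-1}}$, and now $\beta$ is antipodally symmetric; this symmetry is what makes the pulled‑back forms $\beta_e := \phi_e^*\beta$ on $\LPB$ well defined independently of the orientations of edges and compatible with the sign relations of \refD{DiagramSpaces}, just as for $\mathrm{sym}_{S^{n-1}}$.

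Next, fix $\Gamma \in \LD$, order its edges $e_1, \dots, e_{|E(\Gamma)|}$ as in \refD{NewMapToSpheres}, and set $\omega_{e_i} := \phi_{e_i}^*\mathrm{sym}_{S^{n-1}}$ and $\omega'_{e_i} := \phi_{e_i}^*\mathrm{sym}'$, so that $\alpha_\Gamma = \bigwedge_i \omega_{e_i}$ and its primed version is $\alpha'_\Gamma = \bigwedge_i \omega'_{e_i}$. Since all these forms are closed on the bundle total space, telescoping edge by edge yields
\[
\alpha'_\Gamma - \alpha_\Gamma = d\,\Theta_\Gamma, \qquad \Theta_\Gamma := \sum_{j}(-1)^{\nu_j}\Big(\bigwedge_{i<j}\omega'_{e_i}\Big)\wedge \beta_{e_j}\wedge\Big(\bigwedge_{i>j}\omega_{e_i}\Big)
\]
for suitable signs $\nu_j$; this is a smooth form on $\LPB$ of degree $(n-1)|E(\Gamma)| - 1$. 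Put $\eta_\Gamma := (\pi_{\Lk,\Gamma})_*\Theta_\Gamma \in \Omega^{|\Gamma|-1}(\Lk_m^n)$. Applying the fiber‑integration form of Stokes' theorem \refE{Stokes} to $\Theta_\Gamma$ and using $d\Theta_\Gamma = \alpha'_\Gamma - \alpha_\Gamma$ gives
\[
(I_\Lk^{\mathrm{sym}'})_\Gamma - (I_\Lk^{\mathrm{sym}})_\Gamma = d\,\eta_\Gamma \;\pm\; (\partial\pi_{\Lk,\Gamma})_*\Theta_\Gamma,
\]
where the last term is a sum of integrals over the codimension‑one faces of $\LPB$.

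It then remains to analyze those faces. On a principal face --- two endpoints of an edge or arc $e$ colliding --- the face is, up to an $S^{n-1}$ screen factor recording the collision direction (onto which $\phi_e$ restricts as the identity while every other $\phi_{e'}$ is constant), the bundle for $\Gamma/e$. Restricting $\Theta_\Gamma$, the summand carrying $\beta$ on $e$ contributes zero because $\beta$ has degree $n-2 < n-1 = \dim S^{n-1}$ while it is wedged only with forms constant along that screen, whereas in every other summand $\omega_e$ or $\omega'_e$ integrates to $1$ over the screen, leaving exactly the corresponding summand of $\Theta_{\Gamma/e}$. With the same sign bookkeeping already used in the proof of \refP{MapsOfComplexes} to match principal faces with the differential $\delta$, the principal‑face contributions sum to $\pm\,\eta_{\delta\Gamma}$. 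Every other codimension‑one face --- hidden faces, faces at infinity, and (for $I_\Lk$) the anomalous face --- is handled by the same involution and dimension‑counting arguments as in the proof of \refP{MapsOfComplexes}, the only change being that one $(n-1)$-form $\mathrm{sym}_{S^{n-1}}$ has been replaced by the $(n-2)$-form $\beta$; when the $\beta$-edge lies in the collided subgraph $\Gamma'$ this drops by one the degree of the product of volume forms restricted to that face, so each codimension estimate (such as \refE{InfiniteFaceCodim}) must now be run with one more unit of slack, and carrying this through shows the whole non‑principal contribution vanishes once $n \ge 5$. Thus for a cocycle $\Gamma$ (so $\delta\Gamma = 0$) we get $(I_\Lk^{\mathrm{sym}'})_\Gamma - (I_\Lk^{\mathrm{sym}})_\Gamma = d\,\eta_\Gamma$, proving the claim for $I_\Lk$ with $n \ge 5$. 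For $I_\HLk$ one reruns the whole argument with $\HLk_m^n$ and the bundles $\HPB$ in place of $\Lk_m^n$ and $\LPB$ (all the forms above make sense since $\HLD \subset \LD$); the principal‑face analysis is identical, and for the non‑principal faces one uses, besides the $\beta$-degree drop, the two features already exploited for $I_\HLk$ in \refP{MapsOfComplexes} --- no anomalous face ever occurs, and by \refP{GraftComponentVertices} each graft carries at most one segment vertex per segment, which tightens the dimension counts by a further unit --- so that $n \ge 4$ already suffices.

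The hard part will be this last analysis of the non‑principal faces: one has to re‑audit each boundary‑vanishing argument of \refP{MapsOfComplexes} under the replacement of a sphere volume form by its primitive, and verify that exactly a ``$+1$'' shift of the bound on $n$ restores every inequality --- in particular that no genuinely new borderline face slips through (e.g.\ one where the $\beta$-edge is the unique edge leaving an otherwise sub‑trivalent vertex of $\Gamma'$, or one sitting across the anomalous collision for $I_\Lk$). By contrast, the sign bookkeeping identifying the principal‑face sum with $\eta_{\delta\Gamma}$ is routine, parallel to the computation already carried out for \refP{MapsOfComplexes}.
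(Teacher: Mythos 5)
Your proposal follows the same route as the paper: pick an antipodally symmetric primitive $\beta$ of the difference of volume forms, telescope to get a primitive $\Theta_\Gamma$ of $\alpha'_\Gamma-\alpha_\Gamma$, push forward, apply Stokes, and reduce everything to showing the non-principal boundary contributions vanish with one extra unit of codimension (the image of each face in the product of spheres must now have codimension $\geq 2$ rather than $\geq 1$). The paper likewise defers Types I--III to \cite{CCRL} and only works out the Type IV (infinity) faces, where the computation in \refE{InfiniteFaceCodim} gives codimension $\geq 2$ exactly when $n\geq 4$, yielding $n\geq 5$ overall for $I_\Lk$ once the Type III constraint is included.

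The one place where your account diverges from the paper's argument, and where your heuristic would not carry you through, is the $n=4$ case for $I_\HLk$. You attribute the gain to the grafts ``tightening the dimension counts by a further unit,'' but the obstruction at $n=4$ for $I_\Lk$ is not a borderline inequality that a single unit of slack repairs: it is the Type III face whose image in the product of spheres is the pullback, via the unit derivative map, of a bundle over $S^{n-1}$, i.e.\ a face carrying tangential data. The paper's point is qualitative: because of \refP{GraftComponentVertices}, a codimension-one collision within a graft (away from $\infty$) can involve only $r=1$ point on the link, so this face has the \emph{same description as a Type I face} --- no tangential data at all --- and is then killed by the ordinary dimension count. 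If you re-audit the boundary faces as you propose, you should replace the ``one more unit of slack'' slogan for this face by that observation; otherwise the Type III analysis at $n=4$ does not close. (Your remark about the absence of the anomalous face is subsumed in this, since the anomalous face is a special Type III face.)
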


\begin{proof}
The idea of the proof is the same as in \cite[Proposition 4.5, Section A.4]{CCRL} (see also \cite[Section 4.2]{Thurs}).
If  $\alpha_1, \alpha_2$ are two forms on the same total space coming from two different volume forms, then their difference is an exact form $d \beta$.  We want to show that the fiberwise integral of $d\beta$ is exact.  By equation (\ref{E:Stokes}) (Stokes' Theorem), this integral is the difference of an exact form and the integral along the boundary of the fiber of $\beta$.  Thus it suffices to show that this integral along the boundary vanishes.  As before, we can do this either by involutions of boundary faces or by dimension-counting arguments.  However, since $\beta$ is a \emph{primitive} for $\alpha_1 - \alpha_2$, our dimension-counting arguments must show that the image of a boundary face of the total space in the product of spheres has codimension at least \emph{two} (rather than one).

The proof of \cite[Proposition 4.5, Section A.4]{CCRL} for knots treats the case of Type I, II, and III faces for $n\geq 5$.
So to prove the theorem statement for $I_\Lk$ we just need to treat the Type IV faces.
For the faces $\mathfrak{S}$ where the corresponding subgraph $\Gamma'$ is ``less than unitrivalent", we saw that either the image of the face $\mathfrak{S}$ in the product of spheres has codimension $n-1$ ($\geq 2)$, or $\mathfrak{S}$ has an involution that guarantees the vanishing of the integral.    
For the case where $\Gamma'$ is ``at least unitrivalent", our calculation in (\ref{E:InfiniteFaceCodim}) shows that for $n\geq 4$, the quantity $(n-1)|E(\Gamma')| - (r+ns-1)$ 
is $\geq 2$, and hence that the codimension of the image of this Type IV face in the product of spheres is $\geq 2$.  This proves the theorem for $n\geq 5$.

For the statement regarding $I_\HLk$ when $n=4$, we note that the argument fails for $I_\Lk$ and $n=4$ because of the Type III face.
In the case of ordinary (long) knots/links, this face is the pullback via the unit derivative map of a bundle over $S^{n-1}$.  However, for homotopy links, because of our grafts, such a codimension one face can involve only $r=1$ point on the link.  In this case, the description of this face is the same as a Type I face (where only free points collide).  A Type I face does not involve tangential data and can be dealt with by a dimension-counting argument.  Thus we get the desired statement for $I_\HLk$ when $n=4$.  The reader may consult the Appendix of \cite{CCRL} for further details.
\end{proof}

We cannot necessarily extend the result concering $I_\HLk$ to $n=3$ because in that case the image of the Type IV face in the product of spheres may have codimension one.


\section{Configuration space integrals and finite type invariants of 
homotopy string links}\label{S:FTlinks}


In this section, we focus on classical homotopy links, so $n=3$, and we want to see what invariants, i.e.~forms in degree zero, one obtains through our integration.  It turns out that what appears are precisely \emph{finite type invariants} of homotopy links, and that is the main result of this section.  One way of saying this is that the vector space of weight systems $\HLW_k$ from Section \ref{S:Degree0} corresponds precisely to $\R$-valued finite type $k$ invariants of homotopy links via configuration space integrals.  That the two are isomorphic is known \cite{BN:HoLink}, but we exhibit this isomorphism explicitly using configuration space integrals.  For links, this statement appeared in \cite[Section 4]{V:B-TLinks} and is for convenience restated below as \refT{UniversalFTLinks}.  The bulk of this section is devoted to proving the same statement for homotopy links (\refT{UniversalFTHoLinks}).  However, since the proofs are essentially identical for links and homotopy links, and since we supply most of the details here, this section can be thought of as also giving the proof of \refT{UniversalFTLinks}.  
See Remark \ref{R:CompleteProof} for more details.

One important difference between links and homotopy links in this section is that one no longer has to worry about anomalous faces in the case of homotopy links (see Remark \ref{R:NoAnomalous}). Looking at equation \eqref{E:DegreesLinks}, we see that it is precisely diagrams in defect zero that give 
degree zero forms, so this is why we considered them in Section \ref{S:Degree0}; the reader may find it helpful to review that section before proceeding with this one.  Since the rest of the paper only deals with $n=3$, one may now safely confuse diagrams of defect zero with those of main degree zero, since the two coincide for $n=3$.


\subsection{The anomalous correction}\label{S:AnomalousFix}


As mentioned in the proof of \refP{MapsOfComplexes}, the map $I_{\Lk}$ is not a chain map for $n=3$.  Recall that, to prove that $I_{\Lk}$ commutes with the differential, we have to check the vanishing of certain integrals along the hidden faces or faces at infinity of $\LPB$.  
That is, for $\Gamma \in \LD^d_k$, Stokes' Theorem implies that 
$$(d (I_\Lk(\Gamma)))(L) = \int_{\partial (\pi^{-1}_{\Lk, \Gamma} (L))} \alpha_\Gamma,$$ 
and if $\Gamma$ is a cocycle, we know that the principal face integrals contribute zero to the right-hand quantity.
While the vanishing along hidden faces and faces at infinity indeed happens for $n>3$, there is one type of face for which this fails in the case of defect zero and $n=3$.  This is known as the \emph{anomalous face} and is indexed by all vertices of a connected component of a diagram colliding at the same point in $\R^n$.  To fix this, one introduces a correction term which we give for the convenience of the reader in equation \eqref{E:IntegralFix} below.  This correction was first given by Bott and Taubes \cite{BT} in the case of knots and was generalized to links in \cite[Theorem 4.5]{V:B-TLinks}.

\begin{rems}\label{R:NoAnomalous} \

(1)\ \ The collision of all configuration points can only take place in the space 
$$
C[0, ..., 0, k_j, 0,..., 0;\, L, \Gamma],\ \ \ 1\leq j\leq m,
$$ 
because points on different strands of a link cannot come together.  The diagram $\Gamma$ which corresponds to this situation thus must have a connected component with all its segment vertices on a single segment (and does not contain chords -- if it does, the integral along the anomalous face vanishes; see \cite[Proposition 4.3]{V:B-TLinks}).  Since the integral associated to such a $\Gamma$ computes a form on the space of knots (i.e.~only on the $j^\mathrm{th}$ strand of the link), the issue with anomalous faces is thus purely a knotting phenomenon, rather than a linking one.

(2)\ \ As a consequence of the previous remark, and as was mentioned in the proof of \refP{MapsOfComplexes},  anomalous faces are thus not an issue for homotopy links.  Because of how the complex $\HLD$ is defined, a homotopy link diagram concentrated on one segment must be the empty diagram.  The pushforward $\pi_{\HLk, \Gamma}$ along the anomalous face thus vanishes and this is why $I_{\HLk}$ does not require a correction factor in \refT{UniversalFTHoLinks} below.
\end{rems}

To give the complete picture, we remind the reader of what the correction for the case of links is:   Let  $\mathrm{sym}_{S^2}$ now be a \emph{rotation-invariant} smooth unit volume form on $S^2$.  Also recall the definition of a connected component of a diagram (\refD{ConnComp}), and let $\LD^0_{\operatorname{conn}}$ be the subcomplex of $\LD$ consisting of connected diagrams of defect zero (or degree zero, since $n=3$). Consider the map
$$
\tilde{I}_{\Lk}\colon \LD^0_{\operatorname{conn}} \longrightarrow \Omega^0(\Lk_m^3)
$$
defined as follows:
\begin{itemize}
\item If $\Gamma$ 
\begin{itemize}
\item has segment vertices on only one segment, or;
\item has segment vertices on more than one segment but also contains a chord, then
\end{itemize}
$$
(\tilde{I}_{\Lk})_{\Gamma}(L)=(I_{\Lk})_{\Gamma}(L);
$$
\item If $\Gamma$ has segment vertices on only one segment, labeled $s$, and contains no chords, then
\begin{equation}\label{E:IntegralFix}
(\tilde{I}_{\Lk})_{\Gamma}(L)=(I_{\Lk})_{\Gamma}(L) -\mu_{\Gamma} \int\limits_{C[2,L_{s}]} \left(\frac{x_1-x_2}{|x_1-x_2|} \right)^*  \mathrm{sym}_{S^2}
\end{equation}
\end{itemize}
Here $L_{s}$ is the $s$th strand of the link $L$ and $\mu_{\Gamma}$ is a real number which depends only on $\Gamma$ and not on the link (this number is usually difficult to determine).

To extend $\tilde{I}_{\Lk}$ to a map
\begin{equation}\label{E:LinksCorrection}
\tilde{I}_{\Lk}\colon \LD^0 \longrightarrow \Omega^0(\Lk_m^3)
\end{equation}
simply requires a little combinatorial organization.  The reason is that if $\Gamma$ has, say, two connected components $\Gamma_1$ and $\Gamma_2$, and configuration points corresponding to $\Gamma_1$ come together, the integral for this face is a product of two integrals,
$$
(I_{\Lk})_{\Gamma_2}\cdot\partial_{\operatorname{anom}}(I_{\Lk})_{\Gamma_1}
$$
where the second factor is the restriction of $(I_{\Lk})_{\Gamma_1}$ to the anomalous face.  The correction for this term is thus
$$
(I_{\Lk})_{\Gamma_2}\cdot \mu_{\Gamma_1} \int\limits_{C[2,L_{s}]} \left(\frac{x_1-x_2}{|x_1-x_2|} \right)^*  \mathrm{sym}_{S^2}.
$$
However, one also has a situation when the roles of $\Gamma_1$ and $\Gamma_2$ are reversed, and further, each correction has its own anomalous face because of the first integral in the product.  Thus one has to account for correction terms of correction terms.

The pattern is clear if $\Gamma$ has more than two connected components.  Rather than writing this out, we refer the reader to the succinct formula for this iterated correction \cite[Proposition 1.2]{Poirier:Integration} (this also appears in \cite{AF:Vass}, but for framed knots).  Even though this is a formula for knots and not links,  understanding it for knots is sufficient by part (1) of Remark \ref{R:NoAnomalous}.


Again, by Stokes' Theorem,  for any $\Gamma \in \LD^d_k$, $d (\tilde{I}_\Lk (\Gamma))$ can be written as an integral along the boundary of the fiber of $\LPB$.  This integral along the boundary can be broken up into contributions from principal faces, hidden faces, and faces at infinity.
Using the proof of \refP{MapsOfComplexes}, which provides the erratum to \cite{V:B-TLinks} regarding faces at infinity, we have the following:
 
\begin{thm}{\cite[Theorem 4.5]{V:B-TLinks}}
The contribution to $d (\tilde{I}_\Lk (\Gamma))$ from any hidden face (including any anomalous face) or any face at infinity is zero.
\end{thm}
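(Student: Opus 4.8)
\textbf{Proof proposal for \cite[Theorem 4.5]{V:B-TLinks}.}

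The plan is to classify the codimension-one boundary faces of the fiber of $\LPB$ into the types already named in the proof of \refP{MapsOfComplexes} --- principal, hidden (Types I, II, III away from the anomalous case), faces at infinity (Type IV), and the anomalous face --- and to recall or re-derive the vanishing of the corresponding pushforward integral for each type, the only genuinely new ingredient being the anomalous face, which is handled by the correction term in \refE{IntegralFix}. Since the differential $\delta$ raises defect by one and leaves order unchanged, and since for $n=3$ defect equals main degree, we are integrating a top form on each boundary face whose codimension is one in the total fiber; so the content is purely the vanishing of each such top-dimensional contribution.

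First I would dispose of the hidden faces and faces at infinity. For Type I faces (collisions of $\geq 3$ free vertices, or $\geq 2$ free vertices with a segment vertex, away from $\infty$) one uses the Kontsevich involution on the screen-space when the collapsing subgraph $\Gamma'$ has a bivalent vertex, and a dimension count otherwise; this is exactly Lemmas A.7--A.9 of \cite{CCRL}. Type II and Type III faces are handled identically to \cite{CCRL}, \emph{except} the single Type III face in which an entire connected component $\Gamma_c$ of $\Gamma$ collapses to a point on one strand with $\Gamma_c$ at least unitrivalent --- this is the anomalous face and is deferred. For Type IV faces (where $r+s$ points escape to infinity, $r\geq 1$ of them on the link), I would cite verbatim the computation \eqref{E:InfiniteFaceCodim} in the proof of \refP{MapsOfComplexes}: after reducing to the case where $\Gamma'$ is at least unitrivalent via the constancy-of-a-sphere-factor argument, the screen-space has dimension $r+ns-1 < (n-1)|E(\Gamma')|$ for $n\geq 3$, so the pulled-back form factors through a space of too-small dimension and the integral vanishes. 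This is precisely the point at which the present paper's construction of string links (components escaping to infinity in distinct directions, \refD{MapSpace}) supplies the erratum to \cite{V:B-TLinks}.

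It remains to treat the anomalous face, and this I expect to be the main obstacle --- though the obstacle here is organizational rather than conceptual, since the analytic core (that the restriction $\partial_{\operatorname{anom}}(I_\Lk)_{\Gamma_c}$ of the integral over a single at-least-unitrivalent connected component $\Gamma_c$ to the anomalous face is a \emph{constant} $\mu_{\Gamma_c}$ times the Gauss-type self-linking integral $\int_{C[2,L_s]}\bigl(\tfrac{x_1-x_2}{|x_1-x_2|}\bigr)^*\mathrm{sym}_{S^2}$) is the Bott--Taubes anomaly computation from \cite{BT}, using rotation-invariance of $\mathrm{sym}_{S^2}$ to see that the anomalous integral depends on $L_s$ only through this global quantity. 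Granting that, for a general $\Gamma\in\LD^0$ with connected components $\Gamma_1,\dots,\Gamma_r$, each anomalous boundary face corresponds to collapsing some nonempty subset of the components concentrated on a common strand; the contribution factors as a product of the surviving $(I_\Lk)_{\Gamma_j}$'s with the anomalous restrictions of the collapsed ones. The correction $\tilde I_\Lk$ is designed exactly so that its differential produces the negatives of these products (including the iterated ``corrections of corrections'' that arise because each correction term carries its own anomalous face), as encoded in the closed-form iterated-correction formula of \cite[Proposition 1.2]{Poirier:Integration}. Matching the two term-by-term, using that $\mu_\Gamma$ depends only on $\Gamma$ and not on $L$, shows every anomalous contribution to $d(\tilde I_\Lk(\Gamma))$ cancels. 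Combining this with the vanishing of all hidden faces and all faces at infinity from the previous paragraph completes the proof; and I would close with the observation (already in Remark \ref{R:NoAnomalous}) that for $\HLk_m^n$ no connected diagram can be concentrated on a single strand unless it is empty, so no anomalous face occurs and the uncorrected $I_\HLk$ already satisfies the analogous statement for all $n\geq 3$.
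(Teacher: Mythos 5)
Your proposal is correct and follows essentially the same route as the paper: the hidden faces and faces at infinity are dispatched exactly as in the proof of \refP{MapsOfComplexes} (involutions and dimension counts from \cite{CCRL}, plus the Type IV computation \eqref{E:InfiniteFaceCodim} that constitutes the erratum to \cite{V:B-TLinks}), and the anomalous face is cancelled by the $\mu_\Gamma$-correction of \eqref{E:IntegralFix} together with the iterated correction of \cite[Proposition 1.2]{Poirier:Integration}, with rotation-invariance of $\mathrm{sym}_{S^2}$ entering exactly where you place it (the Bott--Taubes anomaly computation, \cite[Lemmas 5.3 and 5.7]{BT}). The paper itself only sketches this by citation, so your write-up is, if anything, more explicit than the source.
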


\begin{rem}
We again wish to emphasize that, for this theorem to be true, it is important that we start with a rotation-invariant form $\mathrm{sym}_{S^2}$ on $S^2$.  For details on why this is necessary, see Lemma 5.7 in \cite{BT} (which uses Lemma 5.3, which in turn uses rotation invariance).
\end{rem}


\subsection{Finite type invariants and chord diagrams}\label{S:FTInvariants}


We now briefly review the theory of finite type link invariants and recall how it is connected to the combinatorics of chord diagrams.  Literature on this subject is abundant, but a good start for the case of knots is \cite{BN:Vass}.  For a slightly more detailed overview than we give here for the case of links, see \cite[Section 4.3]{V:B-TLinks}.

Suppose we are given a link or a homotopy link invariant $V$, so that $V$ is an element of $\Ho^0(\Lk_m^3)$ or $\Ho^0(\HLk_m^3)$. This invariant can be extended to \emph{singular} links, by which we mean links with finitely many double-point self intersections where the two derivatives are independent.  The singularities for ordinary links can come from a single strand crossing itself or two different strands intersecting.  For homotopy links, we only consider those singularities arising from two different strands (if there is a singularity on a single strand, we ignore it).  The extension of $V$ is defined via the skein relation given in Figure \ref{Fig:SkeinRelation}.
The orientation on the link, which for us is given by the natural orientation of each of the $m$ copies of $\R$, needs to be emphasized so that the two resolutions can be distinguished from each other (otherwise the two pictures on the right side of the equation in Figure \ref{Fig:SkeinRelation} can be rotated into one another).

\begin{figure}[h]
\input{SkeinRelation.pstex_t}
\caption{Skein relation.}
\label{Fig:SkeinRelation}
\end{figure}

A $k$-singular link (a link with $k$ singularities) thus produces $2^k$ links on which $V$ can be evaluated. We will call these the \emph{resolutions} of a singular link.  Because of the signs, the order in which singularities are resolved does not matter.  

\begin{defin}\label{D:FT}
The invariant $V$ is \emph{finite type $k$} (or \emph{Vassiliev of type $k$}) if it vanishes on links with $k+1$ singularities.  
\end{defin}

Let 
\begin{align*}
\LV_k = & \ \text{real vector space generated by finite type $k$ link invariants;} \\
\HLV_k = & \ \text{real vector space generated by finite type $k$ homotopy link invariants.}
\end{align*}


Note that $\LV_{k-1}\subset \LV_k$ and $\HLV_{k-1}\subset \HLV_k$ so that it makes sense to form quotients $\LV_k/\LV_{k-1}$ and $\HLV_k/\HLV_{k-1}$.

Next we want to describe a map $f$ which to a finite type invariant associates a weight system (see \refD{WeightSystems}).  Recall that we think of a weight system (as is usual) as a functional on diagrams satisfying the usual STU, IHX, and 1T relations.  The construction is standard in finite type knot theory and this map is in fact the first connection between finite type invariants and the combinatorics of chord diagrams described in Section \ref{S:Degree0} (a detailed account of this in the case of knots is given in \cite{BN:Vass}).  Here we recall and adapt it to the setting of homotopy links.  The inverse of $f$ is given precisely by configuration space integrals and this is how one obtains isomorphisms in Theorems \ref{T:UniversalFTLinks} and \ref{T:UniversalFTHoLinks} below.  The former was already proven in \cite{V:B-TLinks} so we will only provide a proof for the latter here.

\begin{rem}
Another way to construct an inverse to $f$ is the famous \emph{Kontsevich Integral} \cite{K:Fey}.  In fact, this integral provided the first proof of the isomorphism from Theorem \ref{T:UniversalFTLinks} in the case of knots, i.e.~when $m=1$.  This is known as the Fundamental Theorem of Finite Type Invariants.
\end{rem}

To define $f$, first recall \refT{Trivalent=Chord} and the terminology introduced after its statement.  Let $\Gamma$ be a chord diagram in $(\mathcal{HC}^0_k)^*$ and let $H_{\Gamma}$ be any singular homotopy link with singularities as prescribed by $\Gamma$.  By this we mean that $H_{\Gamma}$ is any smooth map of $m$ copies of $\R$ in $\R^3$ with, as usual, disjoint images and which is fixed outside a compact set, but which also has $k$ ``nice'' self-intersections (locally embedded, derivatives independent at intersection point) given by $H_{\Gamma}(x_i)=H_{\Gamma}(y_j)$, $x_i, y_j\in\R$, if there is a chord between vertices $x_i$ and $y_j$ in $\Gamma$.  The points $H_{\Gamma}(x_i)$ and $H_{\Gamma}(y_j)$ are required to be on the strands corresponding to the segments that vertices $x_i$ and $y_j$ are on, and if $x_i$ ($y_j$) comes before some other segment vertex $x_{i'}$ ($y_{j'}$) in the ordering of the vertices of $\Gamma$ (we picture $x_i$ as lying to the left of $x_{i'}$ in this case), then $x_i<x_{i'}$ ($y_j<y_{j'}$) as points in $\R$ (by abuse of notation, we label the segment vertices the same way as coordinates in $\R$).  An example is given in Figure \ref{Fig:InverseExample}.

  \begin{figure}[h]
\input{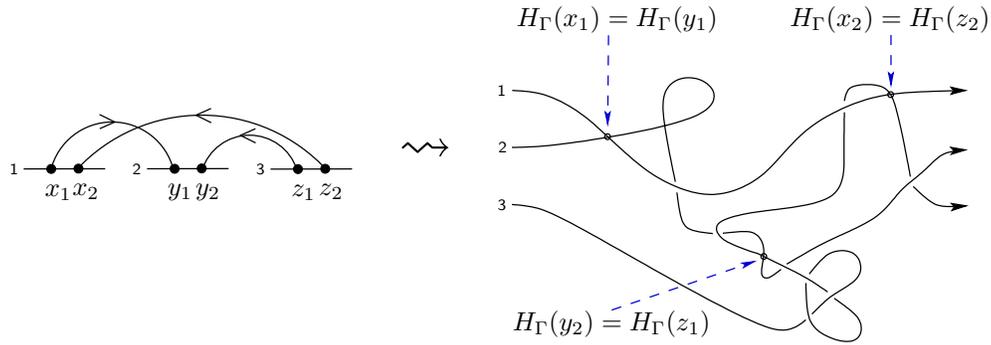}
\caption{An example of a homotopy link $H_{\Gamma}$ associated to a chord diagram $\Gamma\in(\mathcal{HC}^0_3)^*$.  The only requirement is that the relative positions of the singularities respect the relative positions of the chords.  Note that strand 2 intersects itself but we ignore such singularities. }
\label{Fig:InverseExample}
\end{figure}

Now consider the value of a type $k$ invariant $V\in\HLV_k$ on (the sum of the resolutions of) $H_{\Gamma}$.  This value remains unchanged if a crossing between two strands of $H_{\Gamma}$ is switched because, by the skein relation,
$$
V(H_{\Gamma})-V(H_{\Gamma}\text{ with a crossing changed})
=
V(\text{some $(k+1)$-singular link})
=0.
$$
This means that $V$ does not depend on a particular link but only on the placement of singularities.  It thus makes sense to define a map
\begin{align*}
f\colon   \HLV_k & \longrightarrow \mathcal{HCW}_k \\
           V   & \longmapsto 
\left( 
\begin{tabular}{rl}
 $W\colon (\mathcal{HC}^{0}_k)^*/(\mathrm{4T, 1T})$  & $\longrightarrow \R$  \\
 $\Gamma$  & $\longmapsto V(H_{\Gamma})$
 \end{tabular}
 \right)
\end{align*} 
It follows immediately from the definitions that the kernel of $f$ consists precisely of type $k-1$ invariants, so that $f$ becomes an injection
\begin{equation}\label{Mapf}
f\colon   \HLV_k/\HLV_{k-1} \hookrightarrow \mathcal{HCW}_k.
\end{equation}

We can then use the isomorphism 
$$
\HLW_k\cong\mathcal{HCW}_k
$$ from \eqref{E:WeightIsos} to extend $f$ to a functional on trivalent diagrams which satisfies the usual STU, IHX, and 1T relations.  (Recall that this isomorphism is induced by sending a chord diagram to itself and trivalent diagram to a sum of chord diagrams obtained from it by resolving all the free vertices via the STU relation.)  We obtain then an extension of $f$ to an injection

\begin{align}
f\colon   \HLV_k/\HLV_{k-1} & \hookrightarrow \HLW_k \label{E:InverseMapTrivalent}\\
           V   & \longmapsto 
\left( 
\begin{tabular}{rl}
 $W\colon \HLD^0_k/(\mathrm{STU, IHX, 1T})$  & $\longrightarrow \R$  \\
 $\Gamma$  & $\longmapsto
 \begin{cases} 
 V(H_{\Gamma}), &  \text{$\Gamma$ chord diagram}; \\
 \sum_{i}V(H_{\Gamma_i}), &  \text{$\Gamma$ trivalent diagram}
 \end{cases}$
 \end{tabular}
 \right)\notag
\end{align} 
where the $\Gamma_i$ are the chord diagram resolutions of a trivalent diagram $\Gamma$.


\subsection{Integrals and finite type invariants of homotopy string links}\label{S:IntegralsAndFT}


We are now almost ready to state and prove the main result of this section, \refT{UniversalFTHoLinks}.  This theorem 
states that configuration space integrals give an isomorphism between weight systems and finite type invariants of homotopy links.  We will show this by exhibiting the map $f$ above as the inverse to integration.  
The integration of weight systems 
is essentially the same as the integration map $I_\HLk$ from the graph complex.  Before stating the theorem, we explain how this works.  

We can consider $\HLW_k\cong ((\HLD^0_k)^*/(STU, IHX, H1T))^*$ as a space of functionals on $(\LD_k^0)^*$ satisfying certain relations (since $(\HLD^0_k)^*/(STU, IHX, H1T)$ is a quotient of $(\LD^0_k)^*$).  
Choose a basis $\mathcal{B}_k$ of diagrams for $(\LD^0_k)^*$.  Certanly $\mathcal{B}_k$ is finite (and it is canonical up to signs of the elements).  Since $\LW_k \cong \Ho^0(\LD_k^*)$ canonically, a weight system $W$ corresponds canonically to some linear combination of diagrams $\sum_{\Gamma \in \mathcal{B}_k} a_\Gamma \Gamma$.


Thus we have a composition 
$$
\xymatrix{\LW_k \ar@{<->}[r]^-\cong & \Ho^0(\LD_k^*) \ar[r]^-{I_\Lk^0} & \Omega^0(\Lk_m^3)}
$$
given by 
$$
\xymatrix{W  \ar@{<->}[r] &  \sum_{\Gamma \in \mathcal{B}_k} a_\Gamma \Gamma \ar@{|->}[r] &  \sum_{\Gamma\in \mathcal{B}_k}  {a_\Gamma} (I_\Lk)_{\Gamma}.}
$$
Since for all $\Gamma \in \mathcal{B}_k$, $W(\Gamma) = \langle a_\Gamma \Gamma, \Gamma\rangle = a_\Gamma |\Aut(\Gamma)|$, we have $a_\Gamma = W(\Gamma) / |\Aut(\Gamma)| $.

So we can rewrite this composition as  
\begin{equation} \label{E:UniversalWithAutFactors}
W 
 \longmapsto 
\sum_{\Gamma \in \mathcal{B}_k} \frac{W(\Gamma)}{|\Aut(\Gamma)|} (I_\Lk)_\Gamma.
\end{equation}
The latter expression is similar to one of the two formulae for producing a knot invariant from a weight system via configuration space integrals originally written down in \cite{Thurs}.  The only difference is that our formula above contains no anomaly term because $W$ is an element of $\HLW_k$, rather than an arbitrary element of $\LW_k$.
For ordinary link invariants (including knot invariants), all of the above paragraph applies, except that we would have to use the correction for the anomaly term $\widetilde{I}_\Lk$ instead of the map $I_\Lk$.  This is what we do in the statement of Theorem \ref{T:UniversalFTLinks} below.

The second formula in \cite{Thurs} equivalent to (\ref{E:UniversalWithAutFactors}) is a sum over \emph{labeled} diagrams in which the $|\Aut(\Gamma)|$ factors do not appear.  This latter formula (which also appears in \cite{V:SBT}) is not immediately compatible with integration from the graph complex because in the graph complex, diagrams with different labels are equal up to sign.
Nonetheless, the above description as a sum over unlabeled diagrams should clarify the relationship between integration of \emph{weight systems} (i.e., functionals) for finite type invariants (as in \cite{Thurs, V:SBT}) and integration from the graph complex of \emph{diagrams} (as in \cite{CCRL}).

The following statement for links already appeared as Theorems 4.7 and 4.11 
in \cite{V:B-TLinks}, though the correct proof of those theorems requires the erratum we provided in proving \refP{MapsOfComplexes}.

\begin{thm}\label{T:UniversalFTLinks}
For $k\geq 0$ and $m\geq 1$, the map
$$
I^0_{\Lk}\colon  \LW_k  \longrightarrow \LV_k
$$
given by
$$
W \longmapsto  \left( L\longmapsto \sum_{\Gamma\in \mathcal{B}_k} \frac{W(\Gamma)}{|\Aut(\Gamma)|} (\tilde{I}_{\Lk})_{\Gamma}(L)\right)
$$
gives a section to the natural projection $\LV_k \to \LV_k / \LV_{k-1} \cong \LW_k$.

\end{thm}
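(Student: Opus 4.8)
The plan is to show that the composition $\LW_k \xrightarrow{I^0_\Lk} \LV_k \twoheadrightarrow \LV_k/\LV_{k-1} \cong \LW_k$ is the identity, using the map $f$ from \eqref{E:InverseMapTrivalent} as the inverse of integration. More precisely, I would first prove that the image of $I^0_\Lk$ genuinely lands in $\LV_k$ (i.e.\ that the forms produced are in fact \emph{invariants} of finite type $k$), and then compute $f \circ I^0_\Lk$ and show it is the identity on $\LW_k$; since $f$ is injective on $\LV_k/\LV_{k-1}$ with kernel exactly $\LV_{k-1}$, this forces the projection of $I^0_\Lk(W)$ to agree with $W$, giving the section statement.

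The key steps, in order. \emph{Step 1 (Invariance and closedness).} For $W \in \LW_k \cong \Ho^0(\LD^*_k)$, the corresponding cocycle $\sum_{\Gamma} a_\Gamma \Gamma$ lies in $\ker(\delta : \LD^0_k \to \LD^1_k)$. By \refT{IntegralsAreAlgebraMaps} together with the anomalous correction $\tilde I_\Lk$ (Section \ref{S:AnomalousFix}) and the vanishing theorem \cite[Theorem 4.5]{V:B-TLinks} as corrected in \refP{MapsOfComplexes}, the form $\sum_\Gamma \frac{W(\Gamma)}{|\Aut(\Gamma)|}(\tilde I_\Lk)_\Gamma$ is closed, hence (being a $0$-form) locally constant on $\Lk_m^3$, i.e.\ a link invariant. \emph{Step 2 (Finite type $k$).} Evaluate this invariant on a $(k{+}1)$-singular link. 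Using the skein relation of Figure \ref{Fig:SkeinRelation} and a standard local-model argument, the alternating sum over resolutions of a singular crossing converts each crossing into a difference of configuration-space integrals that, in the limit, pulls back through a codimension-one (or smaller) stratum; a dimension count (as in \cite{V:SBT, CCRL}) shows the resulting integral over $k{+}1$ such differences vanishes because a diagram of order $k$ has only $2k$ segment vertices, so it cannot absorb $k{+}1$ independent double-point constraints. This is the ``$I^0_\Lk(W) \in \LV_k$'' claim. \emph{Step 3 (Computing $f \circ I^0_\Lk$).} Evaluate $I^0_\Lk(W)$ on a $k$-singular link $L_\Gamma$ associated to a chord diagram $\Gamma$ with $k$ chords. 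Resolving all $k$ singularities, only the integrals $(\tilde I_\Lk)_{\Gamma'}$ for $\Gamma'$ a chord diagram with the ``same'' chord-endpoint data survive the alternating sum in the limit (all integrals with free vertices, or with chords not matched to the singular points, contribute terms of strictly smaller codimension and vanish); tracking the combinatorial factors $1/|\Aut(\Gamma')|$ against the number of ways of matching, one gets $f(I^0_\Lk(W))(\Gamma) = W(\Gamma)$ for every chord diagram $\Gamma$, hence $f \circ I^0_\Lk = \mathrm{id}$ on $\LW_k \cong \mathcal{LCW}_k$ via \eqref{E:WeightIsos}. \emph{Step 4 (Conclusion).} Since $f : \LV_k/\LV_{k-1} \hookrightarrow \LW_k$ is injective and $f \circ I^0_\Lk = \mathrm{id}$, the composite $\LW_k \to \LV_k \to \LV_k/\LV_{k-1} \xrightarrow{f} \LW_k$ is the identity; as $f$ is an isomorphism onto its image and the standard fact that $\LV_k/\LV_{k-1}\cong\LW_k$ is realized by $f$, it follows that $\LW_k \to \LV_k \to \LV_k/\LV_{k-1}$ is an isomorphism, i.e.\ $I^0_\Lk$ is a section.

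\textbf{Main obstacle.} The hard part is Step 3: carefully analyzing the limiting behavior of the alternating sum of configuration-space integrals as singular crossings are resolved, and showing precisely which boundary strata of the compactified configuration space contribute. One must argue that (a) all integrals coming from diagrams with free vertices, or from chord diagrams whose chords do not match the prescribed singular points, die in the limit by codimension/dimension counting, and (b) the surviving contributions assemble, with the $1/|\Aut|$ weights, to exactly $W(\Gamma)$. This requires a delicate local computation near a transverse double point (essentially that the resolved difference of a single-chord linking integral, localized at the double point, tends to $\pm 1$), combined with Fubini as in the proof of \refP{MapsOfAlgebras} to split off the remaining chords, and an orientation/sign bookkeeping to match the conventions used to define $|\Aut(\Gamma)|$ and the pairing $\langle -,-\rangle$. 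The analogous computation for knots is carried out in \cite{V:SBT}; the main work here is to check that nothing new goes wrong for multiple strands, which is exactly where the absence of anomalous faces for $\HLD$ (Remark \ref{R:NoAnomalous}) and the structure of $\mathcal{LCW}_k$ via \refT{Trivalent=Chord} are used.
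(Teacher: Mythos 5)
Your proposal is correct and takes essentially the same route as the paper: the paper proves this statement by citing \cite{V:B-TLinks} together with the erratum in \refP{MapsOfComplexes} for invariance, and by noting (Remark \ref{R:CompleteProof}) that the detailed argument given for \refT{UniversalFTHoLinks} --- namely showing $f\circ I^0=\mathrm{id}$ by evaluating on explicit singular links, localizing the alternating sum of resolutions to balls around the double points, and identifying the surviving contribution as a product of circle--arc linking numbers --- carries over verbatim to string links. Your Steps 1--4 reproduce exactly that strategy, including the use of injectivity of $f$ to conclude; the only minor imprecision is in Step 2, where the correct count is that each of the $k+1$ double points must trap a whole chord (one endpoint on each local strand), so $k+1$ double points would require $k+1$ chords while an order-$k$ chord diagram has only $k$.
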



\begin{rem}
Note that the map $I^{0}_{\Lk}$ exists even for $n>3$.  However, one then obtains cohomology classes of $\Lk_m^n$ in degree $(n-3)k$ rather than in degree 0.  The same is true for the map $I^{0}_{\HLk}$ in \refT{UniversalFTHoLinks} below.
\end{rem}

We now prove the same statement for homotopy links.   

\begin{thm}\label{T:UniversalFTHoLinks}
For $k\geq 0$ and $m\geq 1$, the map
$$
I^0_{\HLk}\colon  \HLW_k  \longrightarrow \HLV_k
$$
given by
$$
W  \longmapsto  \left( H\longmapsto \sum_{\Gamma\in \mathcal{B}_k} \frac{W(\Gamma)}{|\Aut(\Gamma)|} (I_{\HLk})_{\Gamma}(H)\right)
$$
gives a section to the natural projection $\HLV_k \to \HLV_k / \HLV_{k-1} \cong \HLW_k$.
\end{thm}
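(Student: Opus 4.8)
The plan is to show that $I^0_{\HLk}$ is a right inverse to the quotient map $\HLV_k \to \HLV_k/\HLV_{k-1} \cong \HLW_k$; that is, for a weight system $W \in \HLW_k$, the invariant $I^0_{\HLk}(W)$ is of finite type $k$ and its associated weight system $f(I^0_{\HLk}(W))$ equals $W$. There are three things to verify: first, that $I^0_{\HLk}(W)$ is actually a \emph{homotopy link invariant}, i.e.\ a closed $0$-form on $\HLk_m^3$ that moreover is invariant under self-crossing changes (using Remark \ref{R:HtpyLinks2Links}); second, that it is finite type $k$; and third, that $f \circ I^0_{\HLk} = \mathrm{id}$.

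First I would establish that $I^0_{\HLk}(W)$ is a well-defined invariant. Since $W$ corresponds to a cocycle $\sum_{\Gamma} a_\Gamma \Gamma \in \Ho^0(\HLD^*_k)$ via \refP{DegreeZero}, and since \refP{MapsOfComplexes} shows $I_{\HLk}$ is a chain map for $n=3$ in defect zero (here is where the absence of anomalous faces from Remark \ref{R:NoAnomalous}(2) is essential — no correction term is needed, unlike in \refT{UniversalFTLinks}), the form $\sum_\Gamma \frac{W(\Gamma)}{|\Aut(\Gamma)|}(I_{\HLk})_\Gamma$ is closed, hence a locally constant function, hence a genuine element of $\Ho^0(\HLk_m^3)$. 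I would then verify finite type $k$ by the standard argument: evaluating on a singular homotopy link with $k+1$ double points (between distinct strands) and summing over resolutions via the skein relation of Figure \ref{Fig:SkeinRelation}. Applying Stokes' theorem repeatedly, each resolution difference corresponds to integrating $\alpha_\Gamma$ over a face where two configuration points (one on each of two distinct strands) collide; for $\Gamma$ a trivalent diagram of order $k$, a dimension count (as in \cite{V:B-TLinks}, Section 4.3, adapted from \cite{Thurs}) shows that with $k+1$ forced collisions there are not enough edges to support a nonzero pullback form, so the alternating sum vanishes.

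The computation of $f(I^0_{\HLk}(W))$ is the technical heart and, I expect, the main obstacle. Following the classical knot-theoretic template (\cite{Thurs}, \cite{V:SBT}, and \cite[Section 4.3]{V:B-TLinks}), one evaluates $I^0_{\HLk}(W)$ on a singular homotopy link $H_\Gamma$ prescribed by a chord diagram $\Gamma \in (\mathcal{HC}^0_k)^*$ with $k$ double points, summed over its $2^k$ resolutions. By Stokes' theorem the result localizes, as the double points shrink, onto the principal faces of the bundles $\HPB$ where pairs of segment configuration points collide at the double points; using \refT{Trivalent=Chord} to pass between trivalent and chord diagrams, one shows that the only surviving contribution to $\sum_{\Gamma'} \frac{W(\Gamma')}{|\Aut(\Gamma')|}(I_{\HLk})_{\Gamma'}$ comes from the term $\Gamma' = \Gamma$ itself, and that the corresponding integral of $\alpha_\Gamma$ over the product of small spheres is $\pm |\Aut(\Gamma)|$ (each $S^2$ factor over a double point contributing $\pm 1$, the symmetry of $\mathrm{sym}_{S^2}$ handling the two ways a chord can be traversed, and the automorphism factor arising from the combinatorics of the labeled-versus-unlabeled diagram count as in equation \eqref{E:UniversalWithAutFactors}). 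Hence $f(I^0_{\HLk}(W))(\Gamma) = W(\Gamma)$ for all chord diagrams $\Gamma$, and then by the isomorphism \eqref{E:WeightIsos} ($\HLW_k \cong \mathcal{HCW}_k$) and the extension of $f$ in \eqref{E:InverseMapTrivalent}, we get $f \circ I^0_{\HLk} = \mathrm{id}$ on all of $\HLW_k$. Since $f$ descends to an injection $\HLV_k/\HLV_{k-1} \hookrightarrow \HLW_k$ and $I^0_{\HLk}(W)$ is finite type $k$, this identity exhibits $I^0_{\HLk}$ as the desired section. The bookkeeping of signs and automorphism factors in the localization step — ensuring the "diagonal" contribution is exactly $|\Aut(\Gamma)|$ and all off-diagonal contributions cancel or vanish — is where the real work lies, but it proceeds in parallel with the link case, which is why (per Remark \ref{R:CompleteProof}) this simultaneously supplies the proof of \refT{UniversalFTLinks}.
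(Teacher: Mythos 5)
Your overall strategy is the same as the paper's: prove $f\circ I^0_{\HLk}=\mathrm{id}$ by evaluating on the signed resolutions of a singular link $H_\Gamma$ prescribed by a chord diagram, showing only $\Gamma'=\Gamma$ contributes and that its contribution localizes near the double points as a product of degree-one maps to spheres. Two places where you diverge are worth noting. First, for finite-typeness you propose a direct dimension count on $(k+1)$-singular links; the paper instead gets this for free from the commutative square relating $I^0_{\HLk}$ to $I^0_{\Lk}$ and the already-established \refT{UniversalFTLinks} (Proposition \ref{P:HoLinkIntegralIsFiniteType}). Your route is viable (and Remark \ref{R:CompleteProof} notes it can be made independent of the link case), but as written it is the vaguer of the two steps. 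Second, and more substantively, your mechanism for the localization is off: the paper does \emph{not} run a Stokes'-theorem/boundary-face argument here. It constructs explicit resolutions $H_\Gamma^S$ that are planar outside small balls, splits the configuration space into the locus $\mathcal{C}$ where exactly two points sit in each ball and its complement, shows the integral over $\mathcal{C}$ for $\Gamma'=\Gamma$ converges to a product of circle--arc linking numbers (each $+1$) as the balls shrink, shows the complement and all $\Gamma'\neq\Gamma$ cancel in pairs over the $2^k$ resolutions, and then upgrades ``arbitrarily close'' to ``equal'' by isotopy invariance of $I_{\Gamma'}$. You should replace the Stokes framing with this estimate-plus-invariance argument (or supply the missing details that would make a genuine boundary-term argument work). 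Finally, your claim that the diagonal term contributes $\pm|\Aut(\Gamma)|$ via labeled-versus-unlabeled bookkeeping is a red herring: for a chord diagram in $\HLD^0_k$ the automorphism group (fixing segment vertices pointwise) is trivial, and the paper shows the diagonal contribution is exactly $1$.
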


\begin{rem}
The sums in the two theorems above are both taken over the basis $\mathcal{B}_k$ for $(\LD^0_k)^*$, though equivalently, one could remove from $\mathcal{B}_k$ all the $\Gamma$ such that the 1T relation (or its homotopy link analogue) forces $W(\Gamma)=0$; this subset of $\mathcal{B}_k$ will be smaller for $\HLW$ than for $\LW$.
\end{rem}


We first prove part of this theorem in the following:

\begin{prop}\label{P:HoLinkIntegralIsFiniteType}
The image of $I^0_{\HLk}$ is a subset of $\HLV_k$.
\end{prop}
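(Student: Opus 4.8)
\textbf{Proof proposal for \refP{HoLinkIntegralIsFiniteType}.}

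The plan is to show that for any weight system $W\in\HLW_k$, the invariant $I^0_{\HLk}(W)$ vanishes on every homotopy string link with at least $k+1$ double-point singularities (between distinct strands). The strategy mirrors the classical argument for knots (as in \cite{BN:Vass}, \cite{Thurs}, \cite{V:SBT}): one first establishes that $I^0_{\HLk}(W)$ is actually a well-defined homotopy link invariant, then uses the skein relation from Figure \ref{Fig:SkeinRelation} to reduce the value on a singular link to a sum of integrals over configuration spaces with the singular points built in, and finally shows by a dimension count that such an integral over a $(k{+}1)$-singular link must vanish because each individual diagram $\Gamma\in\mathcal{B}_k$ has only $k$ chords worth of ``constrained'' geometry.

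First I would verify that $I^0_{\HLk}(W)\in\Ho^0(\HLk_m^3)$, i.e.\ that it is a genuine link-homotopy invariant. By \refT{IntegralsAreAlgebraMaps}, $I_{\HLk}$ is a chain map for $n=3$ in main degree zero (this is where the absence of anomalous faces, Remark \ref{R:NoAnomalous}(2), is crucial), so each $(I_{\HLk})_\Gamma$ with $\Gamma$ a cocycle in $\HLD^0_k$ is a closed $0$-form, hence locally constant, hence an invariant of $\HLk_m^3$. Since $W$ corresponds via $\HLW_k\cong\Ho^0(\HLD^*_k)$ to a cocycle, the linear combination $\sum_{\Gamma\in\mathcal{B}_k}\frac{W(\Gamma)}{|\Aut(\Gamma)|}(I_{\HLk})_\Gamma$ is closed and thus is an invariant. (Alternatively one can invoke Remark \ref{R:HtpyLinks2Links}: it suffices that this form be a $\Lk_m^3$-invariant unchanged under self-crossing changes, and both follow from closedness.)

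Next, using the skein relation to extend the invariant to singular homotopy links, I would express the value of $I^0_{\HLk}(W)$ on a homotopy link with singularities $H(x_i)=H(y_i)$ ($i=1,\dots,p$) as an integral over a configuration space in which those $p$ coincidences have been imposed and the corresponding pairs of diagonals have been blown up — this is exactly the ``anomaly-free'' pushforward construction, and the resolution of each singularity via Figure \ref{Fig:SkeinRelation} corresponds geometrically to restricting $(I_{\HLk})_\Gamma$ to a codimension-one stratum, namely a difference of two faces. Carrying this out for each $\Gamma\in\mathcal{B}_k$ and iterating $p$ times, the value on a $p$-singular link becomes a sum of integrals $\int_{\mathfrak{F}_\Gamma}\alpha_\Gamma^{\HLk}$ where $\mathfrak{F}_\Gamma$ is a fiber of dimension $\dim(\pi_{\HLk,\Gamma}^{-1}(\cdot)) - p$ (each singularity cuts the dimension by one) while $\alpha_\Gamma^{\HLk}$ still has degree $(n-1)|E(\Gamma)|=2|E(\Gamma)|$, the dimension of the product of spheres $S^{(n-1)|E(\Gamma)|}$. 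Now when $p=k+1$, the map $\mathfrak{F}_\Gamma\to S^{2|E(\Gamma)|}$ has domain of dimension strictly less than the degree of the form being pulled back; more carefully, one checks that imposing $k+1$ singularities forces the map to factor through a space of dimension at most $2|E(\Gamma)|-1$ (because a diagram of defect zero and order $k$ has only $k$ ``chord-like'' directions available and each singularity removes one degree of freedom without removing a sphere factor), so the pullback form vanishes identically and hence so does the integral. This gives $I^0_{\HLk}(W)(H)=0$ for every $(k{+}1)$-singular $H$, which is precisely the finite-type $k$ condition of \refD{FT}.

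The main obstacle is the bookkeeping in the middle step: one must argue carefully that resolving a singularity really does correspond to a codimension-one boundary stratum of the configuration-space bundle, that the two resolutions appear with the correct opposite orientations so the skein differences match, and that iterating this $p$ times lands in a genuine fiber of the expected dimension (rather than something of smaller or larger dimension due to degenerate incidences among the singular points or among the constrained chords of $\Gamma$). For homotopy links this is cleaner than for ordinary links because we only impose distinct-strand singularities, there are no loops or same-segment chords in $\HLD$ (Definition \ref{D:HoDiagrams}), and no anomalous corrections are needed; nevertheless the dimension inequality ``$\dim \mathfrak{F}_\Gamma < \deg\alpha_\Gamma^{\HLk}$ when $p\ge k+1$'' must be verified uniformly over all $\Gamma$ in the basis $\mathcal{B}_k$, and this is the crux of the whole proof.
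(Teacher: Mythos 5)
Your opening paragraph is fine and agrees with what the paper uses: closedness of the degree-zero form, via \refT{IntegralsAreAlgebraMaps} and the absence of anomalous faces, shows that $I^0_{\HLk}(W)$ is a genuine element of $\Ho^0(\HLk_m^3)$. The problem is your central step. The value of the invariant on a $p$-singular link is \emph{defined} by the skein relation as an alternating sum of its values on the $2^p$ nonsingular resolutions, and each of those values is an integral over the full-dimensional fiber $\pi_{\HLk,\Gamma}^{-1}(\cdot)$: the fiber depends only on the numbers of segment and free vertices of $\Gamma$, not on whether the underlying map $\sqcup_m\R\to\R^3$ has double points. So there is no fiber ``$\mathfrak{F}_\Gamma$ of dimension $\dim(\pi_{\HLk,\Gamma}^{-1}(\cdot))-p$,'' and the inequality you identify as the crux --- domain dimension strictly less than $2|E(\Gamma)|$ when $p\geq k+1$ --- is not available; in defect zero these two numbers are exactly equal on every resolution, which is why one gets a $0$-form in the first place. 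The correct mechanism is pairwise cancellation, not a dimension drop: shrink balls around the $k+1$ double points and observe that if some ball contains no edge of $\Gamma$ with \emph{both} endpoints mapping into it, the two resolutions of that double point yield integrals agreeing up to an error controlled by the radius, so the alternating sum over that resolution vanishes; a trivalent order-$k$ diagram has only $2k$ vertices and each ``engaged'' ball consumes at least two of them, so at most $k$ of the $k+1$ disjoint balls can be engaged. This localization is exactly what the paper carries out (for a different purpose) in the proof of \refT{UniversalFTHoLinks}, following Lemma 5.4 of \cite{V:SBT}. Your argument could be repaired by substituting that cancellation argument for the dimension count, but as written the key step fails.

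You should also know that the paper proves this proposition by a much shorter, entirely different route: it places $I^0_{\HLk}$ into the commutative diagram built from the square \eqref{E:IntegrationSquare}, notes that the restriction of $I^0_{\HLk}(W)$ to $\Lk_m^3$ equals $I^0_{\Lk}$ applied to the image of $W$ in $\LW_k$, and invokes \refT{UniversalFTLinks} to conclude that this restriction lies in $\LV_k$. Since finite-type-ness of a homotopy link invariant is tested only on singular links with distinct-strand double points --- a subset of the singular links used to test an ordinary link invariant --- finite-type-ness of the restriction gives finite-type-ness of $I^0_{\HLk}(W)$ itself. That reduction avoids redoing any localization analysis, at the cost of depending on the (harder) embedded-link theorem.
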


\begin{proof}
We have a commutative diagram as below.  The inclusion $\HLW_k\longrightarrow \LW_k$ just comes from the fact that an element satisfying the relations defining $\HLW_k$ must satisfy the weaker relations defining $\LW_k$.  The rest of the diagram is the square (\ref{E:IntegrationSquare}), where we use \refT{UniversalFTLinks} to deduce that the middle map in the bottom row is injective.

\begin{equation}\label{E:FTDiagram1}
\xymatrix{
\HLW_k \ar@{<->}[r]^-\cong \ar@{^(->}[d]  & \Ho^0(\HLD_k^*) \ar[rr]^-{I_\HLk^0} \ar@{^(->}[d] & &%
\Ho^0(\HLk_m^3) \ar@{^(->}[d] \\
\LW_k \ar@{<->}[r]^-{\cong}  & \Ho^0(\LD_k^*) \ar@{^(->}[r]^-{I_\Lk^0} & \LV_k \ar@{^(->}[r] &
\Ho^0(\Lk_m^3) }
\end{equation}

As shown in the diagram, we already know that elements in the image of $I_\HLk^0$ are invariants of homotopy links, since $I_\HLk^0$ is a chain map.  Furthermore, invariants in the image of $I_\HLk^0$ are finite type since their image under the rightmost vertical map is in $\LV_k$.  This proves the desired statement.
\end{proof}




\begin{rem}[Explicit proof of link-homotopy invariance] 
From \refT{IntegralsAreAlgebraMaps}, we already knew that the link invariants in the image of $I_\HLk^0$ are invariant under link homotopy.  We now present a hands-on, concrete proof of this fact; since this is an argument reminiscent of the original proofs that Bott-Taubes integrals produce finite type invariants, we think this might be beneficial for the reader.

By the discussion at the end of \refS{Links},
it suffices to show that this integral takes the same value on a link before and after a crossing change. 
Thus it suffices to show that  given a diagram $\Gamma\in\HLD_k$ and links $H^+$, $H^-$ which differ only inside a ball $B_{\delta}$ or radius $\delta$ as pictured in Figure \ref{Fig:DeltaBalls}, we have
$$
(I_{\HLk})_{\Gamma}(H^+)=(I_{\HLk})_{\Gamma}(H^-).
$$
In other words,
\begin{gather}\label{E:IntegralDifference}
\int\limits_{\oplus_lC[\vec{d}_l+s_l; H^+, c_l(\Gamma)]}\prod_{\text{edges $(a,b)$ of $\Gamma$}}\left(\frac{x_a-x_b}{|x_a-x_b|}\right)^*\mathrm{sym}_{S^2}  
 - \\
\int\limits_{\oplus_lC[\vec{d}_l+s_l; H^-, c_l(\Gamma)]}\prod_{\text{edges $(a,b)$ of $\Gamma$}}\left(\frac{x_a-x_b}{|x_a-x_b|}\right)^*\mathrm{sym}_{S^2} \notag
=0
\end{gather}
As usual, the configuration points $x_a$ and $x_b$ here correspond to diagram vertices $a$ and $b$.

\begin{figure}[h]
\input{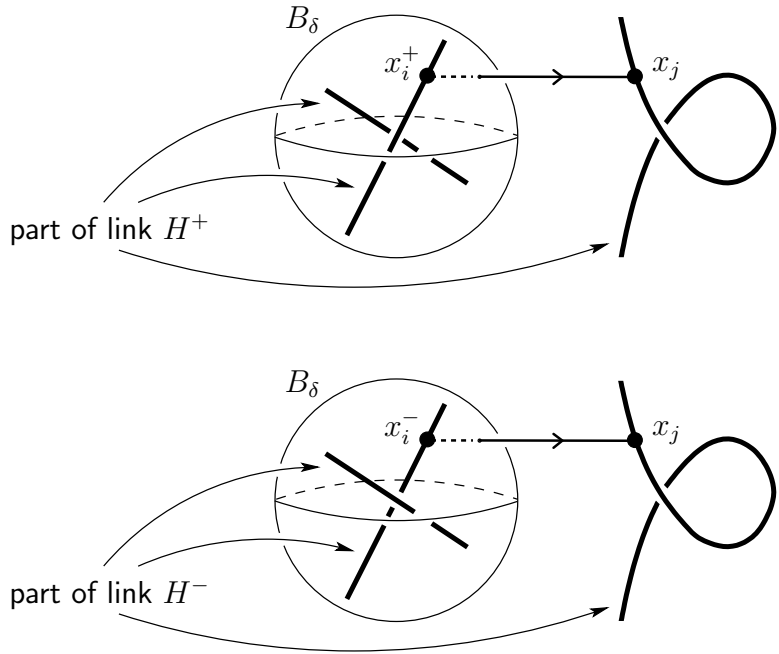}
\caption{Homotopy links $H^+$ and $H^-$ are the same outside the ball $B_{\delta}$ where they differ as pictured.  The two arcs in $B_{\delta}$ come from the same strand.}
\label{Fig:DeltaBalls}
\end{figure}

The domain of integration over which the two integrals differ has measure a constant times $\delta$, and the integrals over these regions are bounded since $|x_a-x_b|>\epsilon>0$ for some $\epsilon$ independent of $\delta$ for all $a$ and $b$ because such $x_a$ and $x_b$ will never lie on the same strand. It follows that the difference of the integrals can be made arbitrarily small.
\qed
\end{rem}

\begin{proof}
[Proof of \refT{UniversalFTHoLinks}]

To show that $I^0_{\HLk}$ is an isomorphism, we argue that its inverse is  the map 
$$
f\colon   \HLV_k/\HLV_{k-1} \hookrightarrow \HLW_k
$$ 
from \eqref{E:InverseMapTrivalent}.  
We claim it suffices to prove that the composition
\begin{equation}
\HLW_k  \stackrel{I^0_{\HLk}}{\longrightarrow} \HLV_k/\HLV_{k-1}
\stackrel{f}{\hookrightarrow}\HLW_k
\end{equation}
is the identity.  
In fact, $f \circ I_\HLk^0 = id$ will imply $f \circ (I_\HLk^0 \circ f) = f$, and since $f$ is injective, it will follow that $I_\HLk^0 \circ f =id$.  Furthermore, by the isomorphism $\mathcal{HCW}_k \cong \HLW_k$ we may think of the composition above as
\begin{equation}\label{E:OneComposition}
\mathcal{HCW}_k  \stackrel{I^0_{\HLk}}{\longrightarrow} \HLV_k/\HLV_{k-1}
\stackrel{f}{\hookrightarrow}\mathcal{HCW}_k.
\end{equation}


To describe this composition, we choose a singular (homotopy) link $H_{\Gamma}$ for each chord diagram $\Gamma$ with $k$ chords.   The (labeled) singularities in $H_\Gamma$ will be prescribed by $\Gamma$, much like in the discussion preceding Figure \ref{Fig:InverseExample}.  In this setting of invariants of link homotopy, we can actually construct the $H_\Gamma$'s quite explicitly.

We start the construction with a trivial string link with its segments all horizontal, numbered in decreasing order of $y$-coordinate.\footnote{These are not quite the ``horizontal" or ``tangle" chord diagrams considered by some authors.  The reason is that there could be two chords between two strands that cross and there is no way to draw all chords horizontally in such a situation.  However, weight systems that are associated with Milnor invariants vanish on chord diagrams with more that one chord connecting two segments \cite{Mellor-WeightSys}, so in that case one can reduce to the case of genuine tangles.  More will be said about this in future work.}
More precisely, we start with $m$ disjoint copies of $\R$, where inside some interval $[-t_0,t_0] \subset \R$ the $i$th strand is given by $t \mapsto (t,-i,0)$, and outside a larger interval $[-t_1,t_1]$, the $i$th strand is given by $t\mapsto (t,|t|(\frac{m+1}{2}-i) ,0)$.  

In what follows,  ``above'' (resp. ``below'') will mean above (resp. below) the $xy$-plane in $\R^3$. We manipulate the strands (within $\sqcup_m [-t_0,t_0] \subset \sqcup_m \R$) to make the singular link $H_\Gamma$, as follows. If there is a chord between the $i$th and $j$th strands and $i<j$, then move strand $i$ so that it passes below strands $i+1,\ldots, j-1$, intersect strand $j$ in a single point, passes beneath strands $j,j-1, \ldots, i+1$, and then resumes its course along $\{(x,-i,0)\}$. 

Figure \ref{Fig:InjectionExample} shows a picture of such an $H_\Gamma$.  (We only show the image of the smaller intervals $[-t_0,t_0]\subset \R$.  Recall that different directions towards infinity, outside of $[-t_1,t_1]$, were required for certain evaluation maps, and hence the configuration space integrals, to be well defined.)

By the Vassiliev skein relation (Figure \ref{Fig:SkeinRelation}), the value of a type $k$ invariant on $H_\Gamma$ is its value on a signed sum of the $2^k$ resolutions of $H_\Gamma$.  It will be useful to define specific resolutions of $H_\Gamma$, one resolution $H_{\Gamma}^S$ for each $S\subseteq\{1, 2, ....k\}$.
We can take $H_\Gamma$ (and hence each $H_{\Gamma}^S$) to lie in the $xy$-plane, except for crossings which take place inside small balls.  
Each $H_\Gamma^S$ will agree with $H_\Gamma$ outside of $k$ small balls around the $k$ double points of $H_\Gamma$.

Define the link $H_{\Gamma}^S$ as follows:  
Consider a chord in $\Gamma$ corresponding to an $\ell \in \{1,...,k\}$.  This also corresponds to a double point in $H_\Gamma$.  Let $i<j$ be the endpoints of the chord $\ell$.  
If $\ell \in S$, then in $H_\Gamma^S$, resolve the double point $\ell$ by perturbing strand $i$ slightly so that it goes over strand $j$.  If $\ell \not\in S$, then perturb strand $i$ slightly so that it  goes under strand $j$.  
Perhaps a better way of thinking about this is as follows: 
each $H_{\Gamma}^S$ is the resolution of $H_{\Gamma}$ where each double point in $S$ has been resolved ``positively" (as in the first picture on the right side of the equation in Figure \ref{Fig:SkeinRelation}), while the remaining singularities have been resolved ``negatively" (as in the second picture on the right side of the equation in Figure \ref{Fig:SkeinRelation}).
An example is shown in Figure \ref{Fig:InjectionExample}. 
 
\begin{figure}[h]
\input{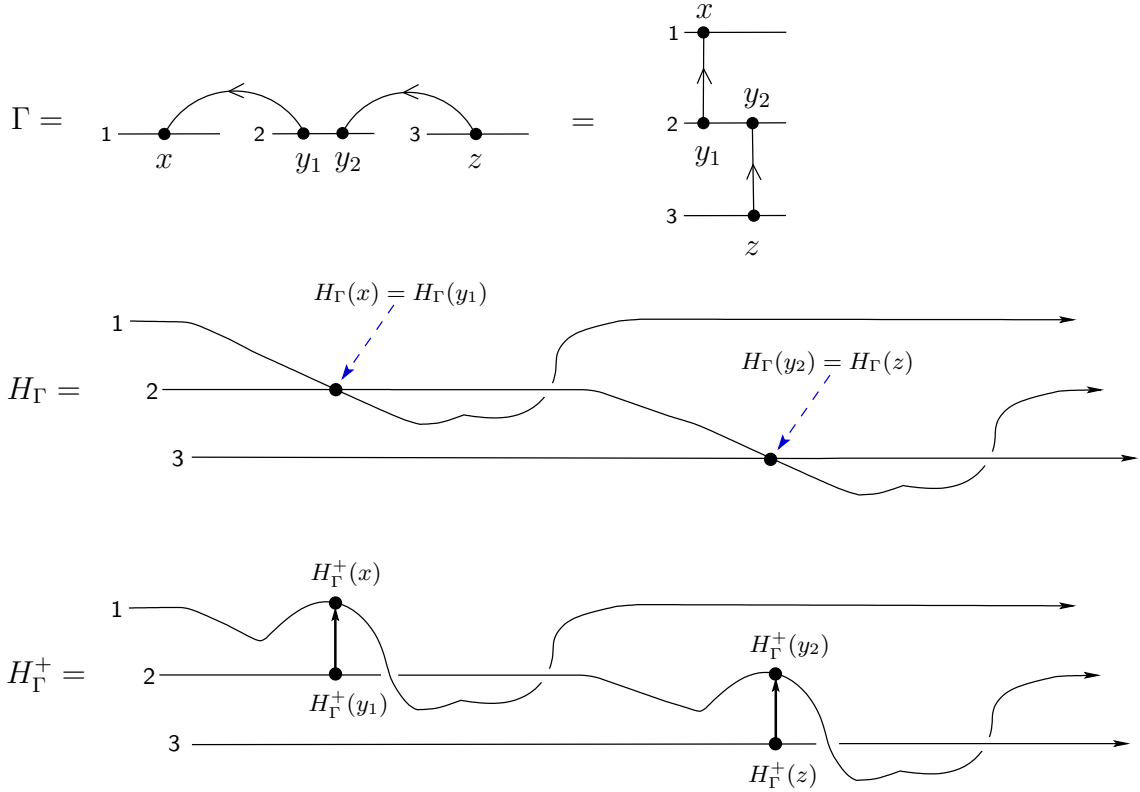}
\caption{An example of a diagram $\Gamma$, its horizontal version, a singular  homotopy link $H_{\Gamma}$, and its resolution $H_{\Gamma}^+:=H_\Gamma^{\{1,...,k\}}$ for which $(I^0_{\HLk})_{\Gamma}$ is non-zero.}
\label{Fig:InjectionExample}
\end{figure}

Returning to the composition \eqref{E:OneComposition}, it is given by
$$
(\Gamma \mapsto W(\Gamma))  \longmapsto \left( \Gamma \mapsto
\sum\limits_{\Gamma'\in \mathcal{B}_k} \ W(\Gamma') \  \sum\limits_{S\subseteq\{1, 2,...,k\}} (-1)^{k-|S|} (I_{\HLk})_{\Gamma'}(H_{\Gamma}^S) \right)
$$
where as before, $\mathcal{B}_k$ is a basis of trivalent diagrams (canonical up to the sign of each diagram).  Here $W \in \mathcal{HCW}_k$ is determined on arbitrary trivalent diagrams $\Gamma'$ by the STU relation.

This composition will be the identity if we can show that 
\begin{equation}\label{E:CompositionCases}
I_{\Gamma'}:=\sum\limits_{S\subset \{1,2,...,k\}} (-1)^{k-|S|}
(I_{\HLk})_{\Gamma'}(H_{\Gamma}^S)=
\begin{cases}
1,  & \Gamma'=\Gamma \\
0,  & \Gamma'\neq \Gamma.
\end{cases}
\end{equation}

First let $\mathcal{C}$ be the subspace of configuration space where there are exactly two points in the ball around each resolved double point.  We first consider the case where $\Gamma'$ is a chord diagram.  We consider the contributions to $I_{\Gamma'}$ from integrating over $\mathcal{C}$, then show that the integral over the complement of $\mathcal{C}$ is zero.

If $\Gamma'$ is a chord diagram other than $\Gamma$, then $\mathcal{C}$ is empty.

If $\Gamma=\Gamma'$, the signed sum of integrals $I_\Gamma'$ over the subspace $\mathcal{C}$ can be rewritten as a sum of integrals with all $+1$ signs by reversing the orientation on every ``under-strand" in $H_\Gamma^S$.  The resulting sum is the integral over a configuration space of points on circles enclosing (straight) arcs.  By choosing smaller perturbations of the strands in all the resolutions, this integral can be made arbitrarily close to a product of linking numbers, one for each circle-arc pair.  See Figure \ref{Fig:CircleArcLink}.  But the linking number of each such pair is +1 (assuming appropriate choices of orientation on the configuration space and the $k$ spheres from which the integrand is pulled back).  Thus the integral $I_\Gamma$ over $\mathcal{C}$ can be made arbitrarily close to 1.

\begin{figure}[h]
\input{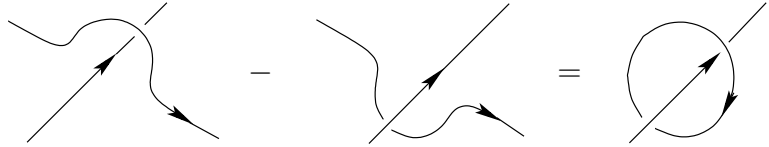}
\caption{A schematic if how, for $\Gamma=\Gamma'$, the integral $I_{\Gamma'}$ over the subspace $\mathcal{C}$ is a product of linking numbers of circles with arcs.}
\label{Fig:CircleArcLink}
\end{figure} 

For any chord diagram $\Gamma'$, the integral $I_{\Gamma'}$ over the complement of $\mathcal{C}$ vanishes.  Indeed, for any configuration in the complement of $\mathcal{C}$, there will be some pair of points joined by a chord $\ell\in \{1,...,k\}$ where at least one of the points is outside the ball around the resolved double point $\ell$.  Partitioning the $2^k$ terms into two parts according to the sign of this $\ell$th resolution, one can show that the two parts nearly cancel; that is, by making the balls smaller, these contributions can be made arbitrarily close to 0.  The details are similar to the proof of Lemma 5.4 of \cite{V:SBT}, though arguably simpler because our $H_\Gamma^S$'s are ``almost horizontal".

Finally, if $\Gamma'$ is not a chord diagram, then the integral $I_{\Gamma'}$ over $\mathcal{C}$ is also arbitrarily close to 0 because the contributions over the sum of the $2^k$ terms can be similarly cancelled in pairs.  The details are exactly as given in \cite{V:SBT}, at the end of the proof of Lemma 5.4 of that paper.

Now the integral $I_{\Gamma'}$ is an isotopy invariant.  Thus in the arguments above, we may replace ``arbitrarily close to" by ``equal to".  So the only nonzero contribution to $I_{\Gamma'}$ is when $\Gamma=\Gamma'$ in which case $I_{\Gamma'}=1$.
This proves  \eqref{E:CompositionCases}, which completes the proof of the theorem.
\end{proof} 

\begin{rem}\label{R:CompleteProof}
Even though in the proof of \refP{HoLinkIntegralIsFiniteType} we appealed to \refT{UniversalFTLinks} and the fact that $I_{\Lk}^0$ is a universal finite type invariant of ordinary string links, it is easy to prove \refP{HoLinkIntegralIsFiniteType} in a way that is independent of \refT{UniversalFTLinks}.
In addition, the proof of \refT{UniversalFTHoLinks} essentially works the same way for string links as it does for homotopy string links.  In light of the fact that the proof of \refT{UniversalFTLinks} is only outlined in \cite{V:B-TLinks} (and requires the erratum from our \refP{MapsOfComplexes} to see that the integration map gives link invariants), one can thus regard the complete picture given here for finite type invariants of homotopy string links as also giving a fairly complete picture of finite type invariants for ordinary string links.\end{rem}

\subsection{Milnor invariants of homotopy string links}\label{S:MilnorInvariants}


With \refT{UniversalFTHoLinks} in hand, we can now quickly deduce the corollary about Milnor invariants of string links as promised in the Introduction. 

For $m$-component string links, each non-repeating index Milnor invariant $\mu_{i_1i_2\cdots i_{k+1}}$, $1\leq i_j\leq m$, is well-defined (for closed links, there is an indeterminacy, modulo which one gets the $\overline{\mu}$ invariants), and it is a finite type $k$ invariant \cite{BN:HoLink, Lin:FTHoLink}.  Furthermore, this is a link-homotopy invariant  \cite{Milnor-Mu}.  Thus $\mu_{i_1i_2\cdots i_{k+1}}$ can be thought of as a finite type invariant of $\HLk_m^3$ (here we again use the discussion following \refC{HLK1isContractible}).
 
We have then the following consequence of \refT{UniversalFTHoLinks}.

\begin{thm}\label{T:Milnor}
Each Milnor invariant $\mu_{i_1i_2\cdots i_{k+1}}$ of string links of $m$ components is given, up to a type $(k-1)$ invariant, by 
\begin{equation}\label{E:MilnorInvariants}
\mu_{i_1i_2\cdots i_{k+1}}(H)=(I^0_{\HLk}(W))(H)
=\sum_{\Gamma\in \mathcal{B}_k} \frac{W(\Gamma)}{|\Aut(\Gamma)|} (I_{\HLk})_{\Gamma}(H)
\end{equation}
for some weight system $W\in\HLW_k$, where $\mathcal{B}_k$ is a basis of diagrams for $(\HLD^0_k)^*$.
\end{thm}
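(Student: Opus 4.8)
The plan is to deduce \refT{Milnor} as an essentially immediate corollary of \refT{UniversalFTHoLinks}. The main point is that \refT{UniversalFTHoLinks} asserts that
$$
I^0_{\HLk}\colon \HLW_k \longrightarrow \HLV_k
$$
is a section of the projection $\HLV_k \to \HLV_k/\HLV_{k-1} \cong \HLW_k$. So first I would recall, citing \cite{BN:HoLink, Lin:FTHoLink, Milnor-Mu}, that for an $m$-component string link the non-repeating Milnor invariant $\mu_{i_1\cdots i_{k+1}}$ is a finite type $k$ invariant and is invariant under link homotopy; hence, via the discussion following \refC{HLK1isContractible} (namely \refL{HtpyLinks2Links}/\refC{HLK1isContractible} — more precisely the remark \ref{R:HtpyLinks2Links}), it descends to an element of $\HLV_k$, the space of finite type $k$ invariants of $\HLk_m^3$.

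Next I would let $\overline{\mu}$ denote the image of $\mu_{i_1\cdots i_{k+1}}$ under the projection $\HLV_k \to \HLV_k/\HLV_{k-1} \cong \HLW_k$; call the resulting weight system $W := \overline{\mu} \in \HLW_k$. Because $I^0_{\HLk}$ is a section of this projection, the invariant $I^0_{\HLk}(W)$ maps to the same class $W$ in $\HLV_k/\HLV_{k-1}$. Therefore the difference
$$
\mu_{i_1\cdots i_{k+1}} - I^0_{\HLk}(W)
$$
lies in the kernel of the projection $\HLV_k \to \HLV_k/\HLV_{k-1}$, which is precisely $\HLV_{k-1}$, the space of finite type $(k-1)$ invariants. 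Unwinding the definition of $I^0_{\HLk}$ from the statement of \refT{UniversalFTHoLinks} gives exactly the right-hand side of \eqref{E:MilnorInvariants}, namely $\sum_{\Gamma\in\mathcal{B}_k} \frac{W(\Gamma)}{|\Aut(\Gamma)|}(I_{\HLk})_\Gamma(H)$, where $\mathcal{B}_k$ is the chosen basis of diagrams for $(\HLD^0_k)^*$. This establishes the claimed equality up to a type $(k-1)$ invariant.

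I do not expect any serious obstacle here: the theorem is genuinely a formal consequence of \refT{UniversalFTHoLinks} together with the known facts about Milnor invariants. The one point requiring a sentence of care is the translation between ``$\mu_{i_1\cdots i_{k+1}}$ is a finite type invariant of string links'' (as proved in the references, using the Vassiliev skein relation on embedded string links) and ``$\mu_{i_1\cdots i_{k+1}}$ is a finite type invariant of $\HLk_m^3$'': since Milnor non-repeating invariants are unchanged under the crossing changes of a strand with itself appearing in Remark \ref{R:HtpyLinks2Links}, they are well-defined on $\HLk_m^3$ and their finite-type filtration level is preserved. After noting this, the rest is bookkeeping. If desired, I would close with a remark that the weight system $W$ associated to $\mu_{i_1\cdots i_{k+1}}$ is the one computed by Mellor \cite{Mellor-WeightSys}, supported on tree diagrams, which is what makes the formula \eqref{E:MilnorInvariants} explicit up to lower order; this anticipates the discussion flagged for future work in \refS{MilnorInvariants}.
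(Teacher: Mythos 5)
Your proposal is correct and matches the paper's own (very brief) argument: the paper likewise records that the non-repeating Milnor invariants are link-homotopy invariants of finite type $k$, hence lie in $\HLV_k$, and then deduces the formula directly from the fact that $I^0_{\HLk}$ is a section of $\HLV_k \to \HLV_k/\HLV_{k-1} \cong \HLW_k$ established in \refT{UniversalFTHoLinks}. The only cosmetic caution is that your notation $\overline{\mu}$ for the image in $\HLV_k/\HLV_{k-1}$ risks collision with the standard $\overline{\mu}$ notation for Milnor invariants of closed links mentioned in the same paragraph of the paper.
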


We can refine this statement.  If $k+1<m$, then some index $j$ between 1 and $m$ does not appear in the subscript of $\mu_{i_1i_2\cdots i_{k+1}}$, and we then have a Milnor invariant of $(m-1)$-component links, namely an invariant of the link obtained by deleting the $j$th strand.  By relabeling, we can assume that that the deleted strand is in fact the $m$th one.  To understand Milnor invariants, it suffices to
study those invariants of $m$-component links that are not induced by the projection
$$
\HLk_m^3\longrightarrow \HLk_{m-1}^3
$$
given by deleting the $m$th strand of a link.  This means that, in the sum from \eqref{E:MilnorInvariants}, we only take those diagrams $\Gamma$ with segment vertices appearing on \emph{all} segments. If the sum is taken over only those diagrams that do not have any segment vertices on, say, the $m$th segment, then one obtains an invariant of $(m-1)$-component links. This is easy to see as such diagrams account for all the necessary cancellations of integration along faces and thus produce a closed form.  We will call diagrams with segment vertices on all segments \emph{maximal} and will denote them by $\Gamma_{\mathrm{max}}$.  

It follows that, since $\mu_{i_1i_2\cdots i_{m}}$ is a type $m-1$ invariant,  each $\Gamma_{\mathrm{max}}$ must have $2(m-1)$ vertices, at least $m$ of which are segment vertices, lying on $m$ segments.  These can also be characterized as forests  with at least $m$ but no more than $2(m-1)$ leaves with $m$ distinct labels (each label is associated with a unique segment/strand).  Recall that by a forest we mean a disjoint union of trees, and by a tree we mean the collection of vertices and edges, but not segments, of a diagram, where the leaves are the segment vertices. 

We thus get the following

\begin{cor}\label{C:MilnorMaxDiagrams}
Each Milnor invariant $\mu_{i_1i_2\cdots i_{m}}$ of string links of $m$ components is given, up to a type $(m-2)$ invariant, by 
\begin{equation}\label{E:MilnorMaxInvariants}
\mu_{i_1i_2\cdots i_{m}}(H)=(I^0_{\HLk}(W))(H)
=\sum_{\Gamma_{\mathrm{max}}\in \mathcal{B}_{m-1}} \frac{W(\Gamma)}{|\Aut(\Gamma)|} (I_{\HLk})_{\Gamma_{\mathrm{max}}}(H)
\end{equation}
for some weight system $W\in\HLW_{m-1}$, where $\mathcal{B}_{m-1}$ is a basis of diagrams for $(\HLD^0_{m-1})^*$.
\end{cor}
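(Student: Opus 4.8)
\textbf{Proof proposal for Corollary~\ref{C:MilnorMaxDiagrams}.}
The plan is to deduce this statement from \refT{Milnor} by a pruning argument on the diagram sum, using the naturality of configuration space integrals under the strand-deletion maps. First I would invoke \refT{Milnor} to write $\mu_{i_1i_2\cdots i_m}$, up to a type $(m-2)$ invariant, as $\sum_{\Gamma\in\mathcal{B}_{m-1}} \frac{W(\Gamma)}{|\Aut(\Gamma)|} (I_{\HLk})_\Gamma(H)$ for some $W\in\HLW_{m-1}$, where $\mathcal{B}_{m-1}$ is a basis of diagrams for $(\HLD^0_{m-1})^*$. Then I would split this sum according to whether a diagram $\Gamma$ is \emph{maximal} (has segment vertices on all $m$ segments) or not. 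The claim is that the non-maximal part of the sum represents an invariant induced from $\HLk_{m-1}^3$ by a strand-deletion projection, hence is a type $(k-1)=(m-2)$ invariant of $m$-component links (indeed, it factors through $(m-1)$-component links, and finite type order is preserved under such pullbacks, while $\mu_{i_1\cdots i_m}$ itself has no such induced part since the multi-index is non-repeating and uses all strands). Absorbing that non-maximal part into the error term, only the maximal diagrams survive in the sum.

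The key steps, in order, are: (1) quote \refT{Milnor} to get the diagram expansion; (2) observe, as in the discussion preceding the corollary, that for a non-maximal diagram $\Gamma$ (say with no segment vertices on the $m$th segment) the form $(I_{\HLk})_\Gamma$ is the pullback along the deletion map $p_m\colon \HLk_m^3 \to \HLk_{m-1}^3$ of the corresponding form on $\HLk_{m-1}^3$ --- this is immediate from the diffeology/naturality remark after \refP{LinkBundle} and the construction of the bundles in \refD{NewPullbacks}, since deleting the $m$th strand does not alter the evaluation maps or grafts of $\Gamma$; (3) conclude that the sub-sum over non-maximal $\Gamma$ (with coefficients $W(\Gamma)/|\Aut(\Gamma)|$) is $p_m^*$ of a closed $0$-form on $\HLk_{m-1}^3$, hence itself closed (this also re-confirms that the total sum is closed, since the maximal and non-maximal parts need not be separately closed a priori --- here one must be slightly careful and instead argue that the sub-sum over diagrams avoiding the $m$th segment is closed because \emph{those} diagrams by themselves account for the relevant face cancellations, exactly as the full sum does for $m$ strands); (4) note that a pullback of a finite type $(m-2)$ invariant of $(m-1)$-component links along $p_m$ is a finite type $(m-2)$ invariant of $m$-component links, and fold the non-maximal contribution into the indeterminacy; (5) perform the counting: a maximal diagram $\Gamma_{\mathrm{max}}$ of order $k=m-1$ and defect zero satisfies $|V_{seg}|+|V_{free}|=2k=2(m-1)$ by the formula following \refD{order}, has at least $m$ segment vertices (one per segment), and is a forest of trees each of whose leaves lie on distinct segments (Remark~\ref{R:Dualizing}(2)), so it has between $m$ and $2(m-1)$ leaves carrying $m$ distinct labels.

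I expect step (3), the closedness bookkeeping, to be the main obstacle: one wants to assert that the sum over maximal diagrams alone gives a closed form so that it represents an honest invariant, but a priori only the full sum over $\mathcal{B}_{m-1}$ is guaranteed closed by \refT{UniversalFTHoLinks}. The resolution is to reorganize the argument: rather than splitting off the maximal diagrams, one splits off, for each index $j$ not forced to appear, the sub-sum of diagrams with no segment vertex on strand $j$; by the naturality in step (2) this sub-sum is $p_j^*$ of the universal finite type $(k-1)$ invariant on $\HLk_{m-1}^3$ built from the appropriately restricted weight system, hence closed and of type $k-1$. Iterating over the (at most $m$) missing indices --- or rather, using a single inclusion-exclusion over which strands are hit --- isolates the maximal part as $\mu_{i_1\cdots i_m}$ minus a finite sum of type $(m-2)$ invariants, which is exactly the assertion of the corollary. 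The remaining verification, that the type order is preserved under $p_j^*$ and that $W$ restricts appropriately, is routine from \refD{FT} and the definition of the weight systems in \refS{Degree0}.
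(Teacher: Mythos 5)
Your outline follows the same route the paper takes: apply \refT{Milnor} with $k=m-1$, discard the non-maximal diagrams on the grounds that they only contribute invariants induced by deleting an unused strand, and then do the vertex count for $\Gamma_{\mathrm{max}}$. Your steps (2), (3), and (5) match the (rather terse) discussion preceding the corollary in the paper, and your worry that only the full sum is a priori closed is well placed: the paper resolves it exactly as you propose, by observing that the diagrams avoiding a fixed segment already account among themselves for all the principal-face cancellations and hence assemble into a closed $0$-form pulled back from $\HLk_{m-1}^3$.

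The one step that does not work as written is the assertion in steps (3)--(4) that the non-maximal sub-sum is ``of type $k-1$'' and can therefore be folded into the type $(m-2)$ indeterminacy. The non-maximal diagrams still have order $k=m-1$, so the invariant of $\HLk_{m-1}^3$ they assemble into is a priori of type $m-1$, not $m-2$, and its pullback along $p_j$ is again of type $m-1$. To absorb it into the error term you need its degree-$(m-1)$ weight system to vanish, i.e.\ you need $W(\Gamma)=0$ for every non-maximal $\Gamma$. This is in fact true for $W=f(\mu_{i_1\cdots i_m})$, but it requires an argument: first, the STU and IHX relations preserve the set of segments carrying segment vertices, so ``vanishing on non-maximal diagrams'' is a well-defined condition on elements of $((\HLD^0_{m-1})^*/(STU,IHX,H1T))^*$; second, for a non-maximal chord diagram $\Gamma$ the singular links $H_\Gamma$ used to compute $f$ have the $j$th strand trivial and split from the rest, and $\mu_{i_1\cdots i_m}$, whose (non-repeating) index involves strand $j$, vanishes on every resolution of such a link. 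With that supplement the non-maximal terms in \eqref{E:MilnorMaxInvariants} are literally zero and the corollary follows; without it, your argument only yields the weaker conclusion that $\mu_{i_1\cdots i_m}$ agrees with the maximal sum up to an invariant induced from $(m-1)$-component links, which is how the paper itself phrases the reduction.
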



\begin{rem}
Suppose that in addition we required that $\Gamma_{\mathrm{max}}\in (\HLD_{m-1})^*$ be connected.  It is immediate that such a trivalent diagram must have precisely $m$ segment vertices (one on each of the $m$ segments) and $m-2$ free vertices.  Since diagrams in $(\HLD_m)^*$ have no loops of edges, it follows that a connected $\Gamma_{\mathrm{max}}$ is precisely a tree with $m$ leaves.  
\end{rem}

The next step is to understand precisely which weight systems appear in \refC{MilnorMaxDiagrams}.   In particular, one could to this end utilize the combinatorial properties of such ``Milnor weight systems" established in \cite{Mellor-WeightSys}.  The connection to  \cite{HabMas-KontsevichMilnor} should also be explored; one of the results of that paper is that Milnor invariants of string links correspond to the tree part of the Kontsevich integral, and it is this integral that gives an alternative way of showing that weight systems correspond to finite type invariants.  (In fact, the Kontsevich integral provided the first proof of this theorem.)  In addition, as mentioned in the Introduction, a further study of configuration space integrals and Milnor invariants in the context of manifold calculus of functors could also be beneficial.

%
%
%


\bibliographystyle{amsplain}

\bibliography{/Users/ismar/Desktop/Papers/Bibliography}

\end{document}